\documentclass[a4paper, numbers=endperiod]{scrartcl}

\usepackage{preamble}

\author{Jan McGarry-Furriol}

\title{Rational characteristic classes of bundles with fibre a product of spheres}
\date{\today}

\begin{document}
\maketitle
\vspace*{-14pt}
\begin{abstract}
  We prove the existence of many non-trivial characteristic classes of smooth oriented bundles with fibre a product \( \SnSn \) of odd-dimensional spheres.
  We do so by proving injectivity of the map from the ring of rational characteristic classes of oriented fibrations with fibre \( \SnSn \); the latter is proven by Berglund--Zeman to be isomorphic to the group cohomology of the symmetric powers of the standard representation of a certain finite-index subgroup \( \Gamma \) of \( \mathrm{SL}_{2}(\mathbb{Z}) \).
  These characteristic classes of smooth bundles are not generalised Miller--Morita--Mumford classes, and they exist in arbitrarily large cohomological degrees.
  Inspired by an example given by Morita, we provide a collection of smooth oriented \( \SnSn \)-bundles, indexed by cyclic subgroups of \( \Gamma \), which detect any given non-zero characteristic class of such fibrations.
\end{abstract}
\tableofcontents

\section{Introduction}\label{sec:introduction}

We present results regarding rational characteristic classes of numerable fibre bundles with fibre a product \( \SnSn \) of odd-dimensional spheres, and structure group \( \Diff^+(\SnSn) \), the topological group of orientation-preserving self diffeomorphisms of \( \SnSn \), equipped with the smooth Whitney topology.
We refer to these as smooth oriented \( \SnSn \)-bundles.
Henceforth, \( n \) is an odd natural number.

A rational characteristic class of smooth oriented \( \SnSn \)-bundles is a natural assignment of a rational cohomology class of the base space to each such bundle.
The set of isomorphism classes of smooth oriented \( \SnSn \)-bundles over a space \( B \) is in natural bijection with the set of homotopy classes of maps from \( B \) to the classifying space \( \B\Diff^+(\SnSn) \).
This implies that the set of rational characteristic classes is in natural bijection with
\begin{equation}
  \label{eq:bundle characteristic classes}
  \H^{*}(\B\Diff^{+}(\SnSn);\QQ) .
\end{equation}

Since a fibre bundle is a fibration, a source of such classes are rational characteristic classes of oriented \( \SnSn \)-fibrations.
Similarly to the previous paragraph, the latter is in natural bijection with the rational cohomology
\[
  \H^{*}(\B\aut^+(\SnSn);\QQ)
\]
of the classifying space of the topological monoid of orientation-preserving self homotopy equivalences of \( \SnSn \).
Our main result is the following.

\begin{theorem}\label{intro:injection of characteristic classes}
  The map
  \begin{equation}\label{eq:fibrations to bundles}
    \H^{*}(\B\aut^+(\SnSn);\QQ)
    \lra
    \H^{*}(\B\Diff^+(\SnSn);\QQ)
  \end{equation}
  is injective.
\end{theorem}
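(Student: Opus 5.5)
The plan is to use the Berglund--Zeman computation: \(\H^{*}(\B\aut^+(\SnSn);\QQ)\) is isomorphic to \(\bigoplus_{k} \H^{*-?}(\Gamma; \mathrm{Sym}^{k}(V))\) (suitably graded), where \(V=\QQ^2\) is the standard representation and \(\Gamma\le \mathrm{SL}_2(\mathbb{Z})\) is the finite-index image of \(\pi_0\aut^+(\SnSn)\) in \(\mathrm{GL}(\H_n)\). Since \(\Gamma\) is virtually free, only group cohomology in degrees \(0\) and \(1\) survives. The degree-\(0\) part (invariants) is essentially trivial except for \(k=0\). The degree-\(1\) part is detected by restriction to cyclic subgroups, up to the finitely many torsion elements, by an Eichler--Shimura / free-group argument. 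In more detail: for a free group (or a virtually free group, after passing to a torsion-free finite-index subgroup, which is harmless rationally) a class in \(\H^1(\Gamma;M)\) is a crossed homomorphism modulo principal ones. If it vanishes on every cyclic subgroup \(\langle g\rangle\), then for each \(g\) the cocycle value \(c(g)\) lies in \((g-1)M\). I would aim for the detection statement \(\H^1(\Gamma;M)\hookrightarrow\prod_{g}\H^1(\langle g\rangle;M)\) for \(M=\mathrm{Sym}^k V\). This should follow by testing a class on parabolic elements together with hyperbolic elements: cusp forms are detected on hyperbolic elements via periods, and the Eisenstein part on parabolics. If this is too hard, I would replace cyclic subgroups by free abelian subgroups of rank \(\le 1\) together with enough of them to cover a basis, and argue by hand.

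Geometrically, for each \(g\in\Gamma\) I would realise \(g\) by an orientation-preserving diffeomorphism \(\phi_g\) of \(\SnSn\), à la Morita's example. Concretely, I take a matrix in \(\mathrm{SL}_2(\mathbb{Z})\) acting on \(\H_n\), realised via clutching maps \(S^n\to \mathrm{SO}(n+1)\) for elementary matrices (possible when the relevant classes lie in \(\Gamma\), which is exactly how \(\Gamma\) arises). Its mapping torus gives a smooth bundle over \(S^1\). To reach the symmetric-power coefficients and the higher-degree classes, I would enlarge this by a fibrewise family: bundles over \(S^1\times\) (a product of spheres), or a mapping torus of a bundle over a torus/sphere built from the \(\pi_*\) of \(\mathrm{SO}(n+1)\) and of the \(\Diff\) group acting rationally through \(\pi_{*}\aut\). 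These realise the classes of \(\H^{*}(\B\aut_{\circ};\QQ)\) (the simply connected part, a Lie model computed by Berglund--Zeman) paired with \(\langle g\rangle\).

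The argument would then run as follows. First, write a given nonzero class \(x\) as an element of \(\H^1(\Gamma;\mathrm{Sym}^kV)\) (or \(\H^0\)). Second, by the detection statement, find \(g\) with \(x|_{\langle g\rangle}\neq0\). Third, construct a smooth bundle \(E_g\to B_g\), with \(B_g\) the mapping torus over \(S^1\) of a product of spheres, whose classifying map to \(\B\aut^+\) factors through \(B\langle g\rangle\)-type data and hits the class \(\mathrm{Sym}^k\) in the fibre. Fourth, compute by a Serre spectral sequence that \(x\) pulls back nontrivially to \(\H^*(B_g;\QQ)\). Since the map to \(\B\aut^+\) factors through \(\B\Diff^+\), injectivity follows.

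The main obstacle is the third step: realising by \emph{smooth} bundles enough rational homotopy of \(\B\aut_{\circ}(\SnSn)\) to hit each \(\mathrm{Sym}^kV\)-isotypic piece equivariantly with respect to \(g\). I would try first to use fibrewise "twisting" bundles coming from \(\pi_{n}\mathrm{SO}(n+1)\)-clutching and their iterated products, checking that their images span the degree-\(k\) generators of the Berglund--Zeman Lie model as \(\Gamma\)-modules.
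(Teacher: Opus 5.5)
\textbf{There is a genuine gap, and it sits in your third step.} The whole content of the theorem, passing from $\aut$ to $\Diff$, lives there, and the constructions you sketch cannot supply it.

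The classes that must be detected include the cuspidal ones in $\H^1_{\mathrm{par}}(\Gamma;\QQ[e_x,e_y]_{2m})$. These are non-zero in general; for instance, for $\Gamma=\SL_2(\ZZ)$ and $2m=10$ they come from $\Delta$. They vanish on parabolic subgroups by definition, and on finite cyclic subgroups because the coefficients are rational. So only hyperbolic $g$ can detect them.

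Now take the kind of family you propose.
\begin{itemize}
\item \emph{Base $S^1\times X$.} Suppose the bundle is the mapping torus of an automorphism $\Phi$ of a bundle $E\to X$ covering $\mathrm{id}_X$, with $X$ simply connected (e.g.\ a product of spheres) and $\Phi$ acting by $g$ on $\H_n$. Naturality of the Euler classes forces the ring map $\epsilon\colon\QQ[e_x,e_y]\to\H^*(X;\QQ)$ to satisfy $\epsilon\circ g=\epsilon$. For hyperbolic $g$, $g-1$ is invertible on $\QQ\langle e_x,e_y\rangle$, so $\epsilon(e_x)=\epsilon(e_y)=0$. Since $\epsilon$ is multiplicative, it then kills everything of positive polynomial degree. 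Hence every class coming from $\H^1(\Gamma;\QQ[e_x,e_y]_{2m})$ with $m\ge1$ pulls back to zero on $S^1\times X$.
\item \emph{$\Phi$ covering a self-map $f$ of $X=\prod S^{n+1}$.} This does not help. Since $n+1$ is even, $f^*$ must send each generator of $\H^{n+1}(X;\QQ)$ to a square-zero class, i.e.\ to a multiple of a single generator. So every eigenvalue of $f^*$ is $0$ or a root of a rational number. The eigenvalues of a hyperbolic $g$ are real quadratic units of absolute value $\neq1$, so again $\epsilon$ must kill $e_x$ and $e_y$.
\end{itemize}
Such families therefore reach at most the Eisenstein part. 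That is exactly what the paper's explicit bundles $M_Q(1,1)$ and $M_P$ in \cref{sec:E2k} do.

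\textbf{The missing idea is a $\Gamma$-equivariant splitting of $\QQ[e_x,e_y]\to\H^*(\B\Tor(\SnSn);\QQ)$,} rather than explicit bases realising classes.
\begin{itemize}
\item Injectivity of this map is easy: pull back along $\B\SO(n+1)^{\times 2}\to\B\Tor(\SnSn)$.
\item Injectivity alone does not give injectivity on $\H^1(\Gamma;-)$, because that functor is not left exact.
\item The paper shows the retraction obtained from $\B\SO(n+1)^{\times2}$ is $\Gamma$-equivariant. It checks this on transvections, via dg Lie models of the generalised Dehn twists $\mathrm{C}^{\infty}(S^n,\SO(n+1))\rtimes\SO(n+1)$. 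It checks it on signed permutations via explicit diffeomorphisms that permute and reflect the factors.
\item So the Euler ring is a $\Gamma$-equivariant direct summand. Since $\Gamma$ has virtual cohomological dimension $1$, the Serre spectral sequences over $\B\Gamma$ collapse, and injectivity on $E_2$ gives the theorem.
\end{itemize}

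\textbf{Cyclic detection is not needed for the theorem.} It enters only in the refinement of \cref{sec:decomp-bund}, via Goldman--Millson's theorem that decomposable classes span $\H_1$; this is the precise form of your detection statement. Your fallback of testing only on cyclic subgroups generated by a free basis would fail, since $\H^1(F;M)\to\prod_i\H^1(\langle g_i\rangle;M)$ has a large kernel. Even there, the detecting bundles are the universal ones over $\B\Diff^{\langle g\rangle}(\SnSn)$, and their comparison with $\B\aut^{\langle g\rangle}(\SnSn)$ again rests on the equivariant splitting.
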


We remark the following about the partial description of \eqref{eq:bundle characteristic classes} that we obtain by combining \cite{BerglundZeman2024algebraicmodels} and \cref{intro:injection of characteristic classes}.
\begin{enumerate}[label=(\roman*)]
  \item The domain has been described in~\cite{BerglundZeman2024algebraicmodels}.
        The reduced cohomology is concentrated in odd degrees; in particular, its product is trivial.
        Therefore, it is determined by its Poincaré series, which is given in loc.\ cit., and recalled in \cref{sec:euler-classes-SnSn-fibrations}.
        Over the complex numbers, it can be described in terms of modular forms via the Eichler--Shimura isomorphism.
        It contains non-zero classes in arbitrarily large cohomological degrees.
\end{enumerate}
In relation to terms that are usually involved in describing rings of characteristic classes of manifold bundles, we observe the following.
We come back to introduce these terms by the end of the introduction.
\begin{enumerate}[resume, label=(\roman*)]
  \item The image of the map \eqref{eq:fibrations to bundles} does not intersect the subring generated by the generalised Miller--Morita--Mumford classes in positive cohomological degrees. Indeed, the latter is concentrated in even degrees.
  \item The obtained partial description of \eqref{eq:bundle characteristic classes} applies in all cohomological degrees; in particular, beyond the pseudoisotopy stable range.
\end{enumerate}

Our second result provides more information on the evaluation of these characteristic classes.
While it implies \cref{intro:injection of characteristic classes}, we derive both from a third statement about Torelli spaces given in \cref{sec:injection of characteristic classes}.
For simplicity, we do not state the latter in the introduction.

Let \( \Gamma \) be \( \SL_{2}(\ZZ) \) if \( n \) is 1, 3 or 7, and the theta subgroup otherwise, that is the subgroup consisting of matrices with exactly one odd entry in each row.
Let \( \langle{\gamma}\rangle \le \Gamma \) denote the subgroup generated by an element \( \gamma \) of \( \Gamma \).
Let
\[
  \Theta_{\gamma}\cl
  E_{\gamma}
  \lra
  B_{\gamma}
\]
be the universal smooth oriented \( \SnSn \)-bundle over the cover \( B_{\gamma} \) of \( \B\Diff^+(\SnSn) \) corresponding to the subgroup of mapping classes that induce an element in \( \langle{\gamma}\rangle \) in degree \( n \) homology.
We prove that this family of bundles detects any given characteristic class of oriented \( \SnSn \)-fibrations by using their relation to the decomposable homology classes which concern the results in~\cite{GoldmanMillson1986decomposable}.

\begin{theorem}\label{intro:pairing decomposable fibrations}
  The set of smooth oriented \( \SnSn \)-bundles \( \{\Theta_{\gamma}\mid \gamma\in \Gamma\} \) satisfies the following properties.
\begin{enumerate}
    \item\label{intro:item:decomposable detection} For each non-zero rational characteristic class \( c \) in \( \H^*(\B\aut^+(\SnSn);\QQ) \) there exists an element \( \gamma \) in \( \Gamma \) such that \( c(\Theta_{\gamma}) \) is non-zero.
   \item\label{intro:item:ev for parabolic}
         If \( \gamma \) is parabolic (see \cref{def:parabolic subgroup}), then there exists a rational characteristic class \( c \) such that \( c(\Theta_{\gamma}) \) is non-zero.
\end{enumerate}
\end{theorem}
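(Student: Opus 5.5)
We reduce both parts to group cohomology of \( \Gamma \) and of its cyclic subgroups, using the Berglund--Zeman description. By \cite{BerglundZeman2024algebraicmodels}, there is an isomorphism
\[
  \H^{*}(\B\aut^+(\SnSn);\QQ)\cong \bigoplus_{k} \H^{*-d_k}(\Gamma; \mathrm{Sym}^{k}(V)),
\]
where \( V=\QQ^2 \) is the standard representation and \( d_k \) is a degree shift. This comes from the Serre spectral sequence of the fibration whose base is \( \B\Gamma \) (the homotopy classes acting on \( \H_n \)) and whose fibre is the classifying space of the Torelli part \( \B\aut^+_{\circ} \). That spectral sequence degenerates, and its fibre cohomology \( \H^*(\B\aut_\circ;\QQ) \) is a sum of representations \( \mathrm{Sym}^k V \) in the relevant degrees.

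For a cyclic subgroup \( \langle\gamma\rangle \), the cover \( B_\gamma \) of \( \B\Diff^+ \) maps to the corresponding cover \( \B\aut^+_{\gamma} \) of \( \B\aut^+ \), which fibres over \( \B\langle\gamma\rangle \). Evaluating \( c(\Theta_\gamma) \) therefore means pulling \( c \) back along
\[
  B_\gamma\to \B\aut^+_\gamma\to \B\aut^+ .
\]
The plan is to first prove the statement about Torelli spaces alluded to above: the map of Torelli spaces \( \B\Diff_{\circ}\to \B\aut_{\circ} \) is surjective on rational cohomology in the relevant degrees, or at least on the \( \Gamma \)-isotypic pieces \( \mathrm{Sym}^k V \) that occur. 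Surjectivity should follow because these classes are detected by fibre integration of powers of fibrewise Euler-type classes; this is where \cref{sec:euler-classes-SnSn-fibrations} enters. Concretely, one can build smooth \( \SnSn \)-bundles over products of spheres by clutching with \( \pi_*\SO(n+1) \)-elements (Morita-type examples), and check that they detect the generators.

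Given that, comparing Serre spectral sequences over \( \B\langle\gamma\rangle \) shows the following. The composite \( \H^*(\B\aut^+)\to \H^*(B_\gamma) \) factors through the restriction
\[
  \bigoplus_k \H^{*}(\Gamma;\mathrm{Sym}^kV)\to \bigoplus_k \H^{*}(\langle\gamma\rangle;\mathrm{Sym}^kV),
\]
and it is injective after this restriction. Part~\ref{intro:item:decomposable detection} then reduces to a purely group-theoretic statement: the product of restriction maps \( \H^1(\Gamma;\mathrm{Sym}^kV)\to\prod_{\gamma}\H^1(\langle\gamma\rangle;\mathrm{Sym}^kV) \) is injective. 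Only \( \H^1 \) matters, because \( \Gamma \) is virtually free and the \( \H^0 \) parts vanish for \( k>0 \). This injectivity is exactly the Goldman--Millson statement about decomposable classes: \( \H_1(\Gamma;\mathrm{Sym}^kV) \) is spanned by the images of \( \H_1(\langle\gamma\rangle;\mathrm{Sym}^kV)=(\mathrm{Sym}^kV)_{\gamma} \). Dually, a cocycle vanishing on every \( \gamma \)-coinvariant is zero. One can also argue this directly: a 1-cocycle \( f \) restricts to zero on \( \langle\gamma\rangle \) iff \( f(\gamma)\in(\gamma-1)\mathrm{Sym}^kV \). If this holds for all \( \gamma \), then \( f \) is a "pointwise inner" cocycle, and for free groups acting on these modules such cocycles are coboundaries.

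For part~\ref{intro:item:ev for parabolic}, let \( \gamma \) be parabolic, conjugate to \( \pm\begin{pmatrix}1&m\\0&1\end{pmatrix} \). Then \( \H^1(\langle\gamma\rangle;\mathrm{Sym}^kV) \) is one-dimensional: the coinvariants of a unipotent Jordan block. The boundary (Eisenstein) part of Eichler--Shimura shows that restriction \( \H^1(\Gamma;\mathrm{Sym}^kV)\to \H^1(\langle\gamma\rangle;\mathrm{Sym}^kV) \) is surjective for even \( k\ge2 \), since the restriction to cusps hits every cusp. So some class restricts nontrivially, and by the factorization above it evaluates nontrivially on \( \Theta_\gamma \).

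The main obstacle I expect is the Torelli-space comparison: showing that the relevant \( \mathrm{Sym}^kV \)-classes of \( \B\aut_\circ \) survive to \( \B\Diff_\circ \), and that the spectral sequence comparison is compatible. I would first attempt explicit families via clutching functions.
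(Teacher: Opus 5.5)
\textbf{The gap is in the Torelli step.} Your outer reduction is the paper's. You factor the classifying map of \( \Theta_\gamma \) through \( \B\aut^{\langle\gamma\rangle}(\SnSn)\to\B\aut^+(\SnSn) \), identify the latter with \( \operatorname{res}^{\Gamma}_{\langle\gamma\rangle} \) on \( \H^1 \), use Goldman--Millson with the perfect pairing for Part 1, and use surjectivity onto cusp cohomology for Part 2. The Torelli step carries all the weight, and you both misstate it and underestimate it.

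What is needed is that \( \H^{*}(\B\aut^{\langle\gamma\rangle}(\SnSn);\QQ)\to\H^{*}(\B\Diff^{\langle\gamma\rangle}(\SnSn);\QQ) \) is injective. Both Serre spectral sequences over \( \B\langle\gamma\rangle \) live in columns \( 0,1 \), so this amounts to the map
\[
  \H^1(\langle\gamma\rangle;\QQ[e_x,e_y])\lra\H^1(\langle\gamma\rangle;\H^{*}(\B\Tor(\SnSn);\QQ))
\]
induced by \( \QQ[e_x,e_y]\cong\H^*(\B\htor(\SnSn);\QQ)\to\H^*(\B\Tor(\SnSn);\QQ) \). Surjectivity of the Torelli map on cohomology is the wrong direction and is not what is needed.

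Even the right property does not suffice. That property is injectivity of \( \QQ[e_x,e_y]\to\H^*(\B\Tor(\SnSn);\QQ) \), and it is all that clutching or product-of-sphere-bundle families over \( \times_2\B\SO(n+1) \) can give you. The problem is that \( \H^1(K;-) \) is not left exact. For \( A\subseteq B \), the kernel of \( \H^1(K;A)\to\H^1(K;B) \) is the image of \( (B/A)^K \). For \( \langle\gamma\rangle\cong\ZZ \), \( \H^1 \) is the coinvariants, and a class of \( A_\gamma \) dies as soon as it lies in \( (\gamma-1)B \).

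This happens in practice. Every non-zero \( c\in\H^1(\Gamma;\QQ[e_x,e_y]_{2m}) \) is the class of a non-split extension \( 0\to\QQ[e_x,e_y]_{2m}\to E_c\to\QQ\to 0 \), and \( c \) maps to zero in \( \H^1(\Gamma;E_c) \), although \( \QQ[e_x,e_y]_{2m}\to E_c \) is injective. So you must rule out that the Torelli cohomology contains such extensions of the Euler-class part. For \( \SL_2 \), no semisimplicity result of Bass--Milnor--Serre type does this for you.

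\textbf{The missing idea} is the paper's \cref{Torelli equivariant splitting}. Pull back along the product map \( \times_2\B\SO(n+1)\to\B\Tor(\SnSn) \) and project away the Pontryagin classes. The resulting retraction of \( \QQ[e_x,e_y]\to\H^*(\B\Tor(\SnSn);\QQ) \) is \( \Gamma \)-equivariant. Hence the inclusion is split injective as \( \Gamma \)-modules, and \( \H^1(K;-) \) preserves it for every \( K\le\Gamma \) (\cref{injection of characteristic classes; monodromy in G}).

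Proving that equivariance is the bulk of the work. \( \Gamma \) is generated by (squares of) elementary transvections and signed permutation matrices (\cref{transvections and signed permutations generate}).
\begin{itemize}
\item For transvections, the product map factors \( P \)-equivariantly through the homotopy fibre of \( \B(\mathrm{C}^\infty(\S^n,\SO(n+1))\rtimes\SO(n+1))\to\B P \). Its monodromy is computed from rational dg Lie models of generalised Dehn twists: Euler classes transform by the dual standard representation and Pontryagin classes are fixed.
\item For signed permutations, an explicit lift to diffeomorphisms normalising \( \SO(n+1)^{\times 2} \) makes the product map equivariant for conjugation.
\end{itemize}

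Separately, your ``direct'' argument that pointwise-inner cocycles on free groups are coboundaries is unsupported. That is exactly the non-trivial content of Goldman--Millson, so rely on the citation rather than that heuristic.
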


In \cref{sec:E2k}, we construct explicit characteristic classes \( \E_{2k} \) that satisfy Part~\ref{intro:item:ev for parabolic} of \cref{intro:pairing decomposable fibrations}.
This is closely related to an example by Morita~\cite[Example 5.3]{Morita1987t2bundles}, as we now briefly explain.

Morita studied the ring of characteristic classes of smooth \( \S^1\times\S^1 \)-bundles. (In this case, \eqref{eq:fibrations to bundles} is an isomorphism.)
In particular, Morita shows in loc.\ cit.\ how one can construct a smooth oriented \(\S^1\times\S^{1}\)-bundle over \( \S^1\times \CP^{\ell} \) that witnesses the non-triviality of a certain family of these classes.
We extend the construction of Morita as follows.

Let \( Q \) be the subgroup of \( \Gamma \) generated by \(
\begin{psmallmatrix}
  2&-1\\1&0
\end{psmallmatrix}
\).
Let \( P \) be the subgroup \( \mathrm{U}\cap\Gamma \), where \( \mathrm{U} \) is the subgroup of \( \SL_2(\ZZ) \) generated by \(
\begin{psmallmatrix}
  1&1\\0&1
\end{psmallmatrix}
\).
Both \( P \) and \( Q \) are parabolic subgroups; in particular they are infinite and cyclic.
For each natural number \( k \), we construct a class
\[
  \mathscr{E}_{2k} \in \H^{2kn+2k+1}(\B\aut^+(\SnSn);\QQ).
\]
If \( n \) is 1, these are the classes that are featured in Morita's example.
We provide smooth oriented \( \SnSn \)-bundles
\begin{gather*}
  \label{eq:1}
  M_Q(1,1)
  \lra
  \B Q\times\B\SO(n+1),
  \\
  M_P\lra \B\Dehn
  \label{eq:sphere bundle with bundle data}
\end{gather*}
where \( \Dehn \) denotes the topological group of smooth maps equipped with the smooth Whitney  topology and the pointwise product.
These bundles admit a map to \( \Theta_Q \) and \( \Theta_P \), respectively.
In particular, the following result gives an example of \cref{intro:pairing decomposable fibrations}, Part~\ref{intro:item:ev for parabolic}.

\begin{proposition}
  The classes \(\mathscr{E}_{2k}(M_Q(1,1)) \) and \( \E_{2k}(M_{P})\) are non-zero for each \( k \).
\end{proposition}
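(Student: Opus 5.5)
I would reduce both statements to a computation of the pullback of \( \E_{2k} \) along the classifying map of each bundle. The plan is to express this pullback, via the Serre spectral sequence of a parabolic subgroup acting on symmetric powers, as a product of a degree-one class coming from the cyclic group \( \langle\gamma\rangle\cong\ZZ \) and a degree-\( 2k(n+1) \) class coming from the ``bundle data'' (\( \B\SO(n+1) \), respectively \( \B\Dehn \)).

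Concretely, I would recall that the Berglund--Zeman model identifies \( \H^*(\B\aut^+(\SnSn);\QQ) \) with \( \bigoplus_k \H^{1}(\Gamma; V_{2k})\otimes(\text{degree shift }2kn+2k) \). Here \( V_{2k}=\mathrm{Sym}^{2k}(\QQ^2) \) occurs as the cohomology of the fibre of \( \B\aut^+_{\circ}\to\B\Gamma \), namely the derivation Lie algebra of the minimal model of \( \SnSn \). Under this identification, \( \E_{2k} \) corresponds to an Eisenstein-type class in \( \H^1(\Gamma;V_{2k}) \) whose restriction to a parabolic subgroup \( P \) or \( Q \) is non-zero. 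This restriction property is the defining feature of Eisenstein classes: restriction to the boundary detects them. Restricting to the cyclic subgroup, \( \H^1(\ZZ;V_{2k})=(V_{2k})_{\gamma} \) is the coinvariants, which is one-dimensional for a unipotent \( \gamma \). So the first step is to check that \( \E_{2k}|_{\langle\gamma\rangle} \) generates \( (V_{2k})_\gamma \) for \( \gamma \) a generator of \( P \) or \( Q \).

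Next, for \( M_Q(1,1) \) I would compute the composite \( \B Q\times\B\SO(n+1)\to\B\aut^+(\SnSn) \). The \( \SO(n+1) \)-part acts diagonally on the two sphere factors, so on the fibre \( \B\aut_\circ \) it hits the derivations corresponding to the Euler class of the vertical tangent/sphere data. Rationally, the map \( \B\SO(n+1)\to \B\aut_{\circ}(\SnSn) \) pulls back the generators of \( V \) to Euler/Pontryagin-type classes, and \( V_{2k} \) is sent to the \( 2k \)-th power of a degree-\( (n+1) \) class, e.g.\ \( p_{(n+1)/2} \) or \( e^2 \), which is non-zero in \( \H^*(\B\SO(n+1);\QQ) \). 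Combined with the \( Q \)-equivariance, the Serre spectral sequence for the product degenerates, and the pullback is \( \theta\otimes x^{2k} \) with \( \theta\in\H^1(\B Q) \) and \( x \) the relevant polynomial generator. This is non-zero by the previous step. For \( M_P \) the same argument applies with \( \B\Dehn \) in place of \( \B\SO(n+1) \): the group of smooth maps \( \S^n\to\SO(n+1) \) defining Dehn-twist-type diffeomorphisms has a rational cohomology class of degree \( n+1 \) (transgressed from \( \pi_n\SO(n+1) \) or evaluated via the fundamental class of \( \S^n \)) whose powers survive.

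The main obstacle is the middle step: identifying precisely which weight vector of \( V_{2k} \) the bundle data pulls back non-trivially, and checking that this vector survives in the coinvariants \( (V_{2k})_\gamma \) rather than lying in the image of \( \gamma-1 \). For \( P \), the coinvariants are spanned by the lowest-weight vector \( y^{2k} \), so I would need the bundle data to detect \( y^{2k} \) (twisting only the second sphere). For \( Q \), I would first conjugate \( Q \) to an upper triangular form and redo the check. If the direct check fails, I would fall back on Morita's \( n=1 \) computation and a naturality argument relating the general case to it.
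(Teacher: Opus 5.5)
Your overall architecture matches the paper's. Each bundle's classifying map induces $\operatorname{res}_Q$ (resp.\ $\operatorname{res}_P$) on $\H^1(\Gamma;\QQ[e_x,e_y])$, followed by an isomorphism out of the one-dimensional coinvariants. So everything reduces to showing $\operatorname{res}_Q\mathscr{E}_{2k}\neq0$ and $\operatorname{res}_P\mathscr{E}_{2k}\neq0$.

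\textbf{The gap.} That last step, which is the only place the specific class $\mathscr{E}_{2k}$ enters, is never proved. You justify it by calling $\mathscr{E}_{2k}$ ``Eisenstein-type''. But $\mathscr{E}_{2k}$ is defined as the derivation with $\mathscr{E}_{2k}(\alpha)=e_x^{2k}-e_y^{2k}$ and $\mathscr{E}_{2k}(\beta)=0$. Its agreement with the Eisenstein class modulo parabolic classes is a \emph{consequence} of the non-vanishing you want, so the appeal is circular.

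It would also be insufficient. For $n\notin\{1,3,7\}$ the theta group has two cusps and $\H^1/\H^1_{\mathrm{par}}$ is two-dimensional. ``Detected by the boundary'' only says that the direct sum of the two restrictions is injective there, so a non-parabolic class may restrict to zero on $P$ or on $Q$.

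The missing argument is short. Since $t=\begin{psmallmatrix}1&1\\0&1\end{psmallmatrix}=\beta^{-1}\alpha$, the derivation rule gives $\mathscr{E}_{2k}(t)=\beta^{-1}\cdot(e_x^{2k}-e_y^{2k})=e_y^{2k}-(e_y-e_x)^{2k}$. This maps to $-e_x^{2k}\neq0$ under the coinvariant projection $e_y\mapsto0$. Moreover, $\mathscr{E}_{2k}$ is defined on all of $\SL_2(\ZZ)$, where $Q$ is conjugate to $\langle t\rangle$ and $P$ has finite index in $\langle t\rangle$. Hence both restrictions are non-zero with rational coefficients. Your fallback to Morita's $n=1$ computation ``by naturality'' cannot replace this, since nothing relates $\S^1\times\S^1$-fibrations to $\SnSn$-fibrations for $n>1$.

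\textbf{Smaller points.} The step you single out as the main obstacle is routine, but it is misstated.

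For $M_Q(1,1)$, the relevant class is the Euler class $e\in\H^{n+1}(\B\SO(n+1);\QQ)$, with $\Delta^*\colon e_x,e_y\mapsto e$. The classes $p_{(n+1)/2}=e^2$ that you name have degree $2n+2$, not $n+1$. For the generator $q$ of $Q$ one has $q\cdot f(e_x,e_y)=f(e_y,2e_y-e_x)$, so $\Delta^*$ is $Q$-invariant. Since $\Delta^*(e_x^{2k})=e^{2k}\neq0$, it induces an isomorphism from the one-dimensional $Q$-coinvariants, with no need to conjugate $Q$.

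For $M_P$, the base $\B\Dehn$ is not a product with $\B P$; it fibres over $\B P$. You therefore need the cohomology of this homotopy fibre, its $P$-action, and the map from $\QQ[e_x,e_y]$. The paper gets these from its dg Lie models. Using the rational section of $\B\Dehn\to\B\mathrm{map}(\S^n,\mathrm{aut}^+(\S^n))$, one computes on the target. There the fibre has cohomology $\QQ[e_x]$ with trivial $P$-action, and the map is the projection $e_y\mapsto0$, which is exactly the $P$-coinvariant map.

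So in these conventions the coinvariants are spanned by $e_x^{2k}$, not $e_y^{2k}$. Indeed $e_y^{2k}$ is $P$-invariant and lies in the image of $t-1$. Likewise, the twisted sphere is the first factor, fibred over the second.
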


When \( n \) is 3, we can restrict \( M_{P} \) to \( \B P\times\mathbb{H}\mathrm{P}^{\ell} \) while preserving the property that \( \mathscr{E}_{2k} \) pulls-back to a non-zero class for all \( 2k\le\ell \).

\paragraph{Relation to other results}
Consider the \( g \)-fold connected sum \( W_g=\#_{g}\SnSn\), and the manifold \( W_{g,1} \) obtained by removing an open \( 2n \)-dimensional disc from \( W_g \).
This family of manifolds plays a distinguished role in the study of characteristic classes of manifold bundles, and it has been studied extensively.
We explain which of the results in the literature about this family of examples do and do not apply to the ring
\[
  \H^{*}(\B\Diff^{+}(\SnSn);\QQ).
\]

Let \( M \) be a simply connected compact smooth manifold of dimension \( m \).
Let \( \Diff_{\partial}(M) \) be the topological group consisting of those diffeomorphisms that fix a neighborhood of the boundary pointwise.
There has been important progress in the last couple of decades on the understanding of the cohomology of \( \B\Diff_{\partial}(M) \), notably sparked by \cite{MadsenWeiss2007} (in the case \( m=2 \)), and \cite{GRW2014moduli}, \cite{GRW2018stabilityI}, and \cite{GRW2017stabilityII} (in the case \( m\ge6 \) even).
In the cited work, the authors provided a homotopy theoretical description of the cohomology of \( \B\Diff_{\partial}(M) \), in a range of degrees depending on the \textit{genus} of \( M \); the latter is the largest natural number for which \( M \) admits an embedding of \( W_{g,1} \).
In the case of \( W_g \) and \( W_{g,1} \), this range of degrees goes up to \( (g-3)/2 \), and the answer is given by the free graded commutative algebra on certain classes
\begin{equation}\label{eq:MMM classes}
  \kappa_{b}\in\H^{|b|-m}(\B\Diff_{\partial}(M);\QQ).
\end{equation}
These are called generalised Miller--Morita--Mumford classes, and they are assigned to monomials \( b \) in the Euler class and Pontryagin classes in \( \H^{*}(\B\SO(2n);\QQ) \).
However, this description does not apply to the case of genus one, which we treat in this paper.
For the same reason, results such as \cite{KupersRandalWilliams2020algebraicTorelli} and \cite{KRW2020torelliCalc} do not apply.
Nevertheless, the mentioned work of Galatius--Randal-Williams is involved in results of \cite{goodwillieStabilityConcordanceEmbeddings2024} and \cite{Kupers2019finiteness}, which do provide some information, as we explain below.

More classical is the pseudoisotopy approach to the homotopy type of the diffeomorphisms group of \( M \), for \( m\ge5 \).
In this approach, one considers the map
\[
  \B\Diff_{\partial}(M)
  \lra
  \B\widetilde{\Diff}_{\partial}(M)
\]
to the classifying space of the so-called block diffeomorphisms group.
By \cite{weissAutomorphismsManifoldsAlgebraic1988} the homotopy fibre of this map can be described in terms of Algebraic K-theory, in those homological degrees within the pseudoisotopy (or concordance) stable range.
In the case of \( \SnSn \), it follows from \cite{goodwillieStabilityConcordanceEmbeddings2024} that this range of degrees in rational homology goes up to degree \( 2n-4 \), as long as \( n\ge 5 \).
The remaining problem is to access the rational cohomology of \( \B\widetilde{\Diff}_{\partial}(M) \).
To do this, one can analyse \( \B\aut_{\partial}(M) \), and use the surgery exact sequence.
An example of an execution of this programme is the work in~\cite{berglundHomologicalStabilityDiffeomorphism2013} and~\cite{BerglundMadsen2020}, which treats the case of \( W_{g,1} \).
See also~\cite{KrannichKupersoberwolfach2019homotopyTheory}.
Their results do not provide information about the ring of characteristic classes that we study in this paper, because the map \( \B\aut_{\partial}(W_{1,1})\ra \B\aut^+(W_{1}) \) vanishes on rational cohomology if \( 2n\ge 6 \) (see \cref{rmk:vanishing of partial Wg1 to Wg}).
However, they serve us to contrast with \cref{intro:injection of characteristic classes}.
If we combine \cite{BerglundMadsen2020} with the Euler characteristic estimates of \cite{BorinskyVogtmann2023EulerCharacteristic}, we learn that there is a huge amount of odd-degree cohomology classes in the domain of the map
\begin{equation*}
  \label{eq:Wg1 fibrations}
  \H^{*}(\B\aut_{\partial}(W_{g.1});\QQ)
  \lra
  \H^{*}(\B\Diff_{\partial}(W_{g,1});\QQ),
\end{equation*}
for \( 2n\ge6\) and large enough \( g \).
These classes must lie in the kernel of this map, because the classes \( \kappa_{b} \) in \eqref{eq:MMM classes} live in even degrees.
It is also mentioned in \cite{KrannichKupersoberwolfach2019homotopyTheory}, but not discussed, that there is an approach based on embedding calculus to prove existence of non-zero classes in the target of \eqref{eq:Wg1 fibrations} beyond this range.

What was known about the ring
\[
  \H^{*}(\B\Diff(\SnSn);\QQ)
\]
is that in each degree it consists of a finite-dimensional vector space \cite{Kupers2019finiteness}, and that the tautological ring modulo the nilradical consists only of the rational numbers in degree zero \cite{galatius2017tautological}.
The tautological ring is the subring generated by the generalised Miller--Morita--Mumford classes.
The part of the tautological ring that is defined in \( \H^{*}(\B\aut^+(\SnSn);\QQ) \) is also \( \QQ \) by~\cite{prigge2024tautologicalringsfibrations}.

\paragraph{Acknowledgments}
I am grateful to my PhD advisor, Alexander Berglund, for suggesting this topic, and for his guidance and help to improve the present work.
I also thank Miguel Barata, Jo\~{a}o Lobo Fernandes, Dan Petersen, Nils Prigge, and Robin Stoll for useful conversations, and for comments on earlier versions of this document.

\section{Fibrations, bundles, and classifying spaces}\label{sec:class-fibr-1}
We present the classification of oriented fibrations using the formalism of~\cite{May1975fibrations}.
By a space we mean a compactly generated weakly Hausdorff space with the homotopy type of a CW complex.

Let \( F \) be a finite CW complex together with a Poincaré duality space structure of dimension \( m \).
That is, an integral homology class \( [F]\) of degree \( m \), such that the cap product with it
\[
  \H^{*}(F;\ZZ)\lra\H_{m-*}(F;\ZZ)
\]
is an isomorphism.
Such a class \( [F] \) is called an orientation on \( F \).

A fibration with fibre \( F \), or \( F \)-fibration, is a (Hurewicz) fibration \( E\ra B \) such that for every point \( b\in B \) there exists a homotopy equivalence \( F\ra E_b \) to the fibre over \( b \).

\begin{definition}
  An oriented \( F \)-fibration is an \( F \)-fibration \( E\ra B \) together with a Poincaré duality space structure on \( E_b \) for every \( b\in B \), such that, for every path in \( B \), the induced map between the respective fibres preserves the orientation.
\end{definition}

Given an \( F \)-fibration over \( B \), we write \( \mathcal{H}_m \) for the local system that takes a point of \( B \) to the \( m \)th homology of the fibre over \( b \).
We call it the monodromy local system.
An orientation on the fibration, with fibre of dimension \( m \), provides an isomorphism between \( \mathcal{H}_m \) and the constant functor \( \H_m(F;\ZZ) \).

A fibre homotopy equivalence between two oriented \( F \)-fibrations \( E\ra B \) and \( E'\ra B \) is a
a map \( f\cl E\ra E' \) of over \( B \) that is fibrewise orientation-preserving, and that admits an inverse up to homotopy over \( B \).

Let \( \B \) be the geometric bar construction of~\cite[\sec 7]{May1975fibrations}.
It is a functor on the category of triples \( (M,G,N) \) where \( G \) is a topological monoid, \( M \) is a right \( G \)-module, and \( N \) is a left \( G \)-module.
A map to another such triple \( (M',G',N') \) in this category is a triple \( (r,\varphi,l) \) where \( \varphi \) is a map between the topological monoids, and \( r \) and \( l \) are module maps to the \( G \)-modules \( \varphi^{*}M' \) and \( \varphi^{*}N' \), respectively.
We omit an entry on a triple when it is the terminal object.
All actions in this paper are left actions, unless specified otherwise.

\begin{theorem}
  Let \( B \) be a space.

  The set of equivalence classes of oriented \( F \)-fibrations over \( B \) is in natural bijection with the set of homotopy classes \( [B,\B\aut^{+}(F)] \).

  Let \( G \) be a topological group that acts effectively on \( F \).
  The set of equivalence classes of numerable fibre bundles over \( B \) with fibre \( F \) and structure group \( G \) is in natural bijection with the set of homotopy classes \( [B,\B G] \).
\end{theorem}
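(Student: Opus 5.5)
This statement is essentially a repackaging of May's classification theorem for fibrations \cite{May1975fibrations} together with the classical classification of numerable principal bundles. I would derive it from those results rather than prove it from scratch.

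\textbf{Bundles.} The second part is standard. Numerable fibre bundles with fibre \( F \) and structure group \( G \) correspond, via the associated bundle construction \( P\mapsto P\times_G F \), to numerable principal \( G \)-bundles. The construction is invertible when \( G \) acts effectively: one recovers the principal bundle as the bundle of admissible frames, using that the transition functions are then uniquely determined by the fibre bundle atlas. Numerable principal \( G \)-bundles over \( B \) are classified by \( [B,\B G] \), with \( \B G=\B(*,G,*) \) the bar construction. The universal bundle is \( \B(*,G,G)\ra\B(*,G,*) \), and it is numerable and universal by \cite[\sec 7--8]{May1975fibrations}. Naturality in \( B \) is immediate because both constructions commute with pullback.

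\textbf{Fibrations.} For the first part, I would apply May's theorem: for a space \( F \) of the homotopy type of a CW complex, equivalence classes of \( F \)-fibrations over \( B \) are in natural bijection with \( [B,\B\aut(F)] \). The universal fibration is \( \B(*,\aut(F),F)\ra\B(*,\aut(F),*) \), and the bijection sends a map to the pullback of this universal fibration. One should note that May classifies fibrations up to fibre homotopy equivalence, and that our standing hypothesis on spaces guarantees his hypotheses hold. The main step is then to incorporate orientations, and I would try two approaches.

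\begin{enumerate}
  \item Apply May's theorem directly to the monoid \( \aut^+(F) \) of orientation-preserving self-equivalences. Here one must check that May's machinery works for \( \aut^+(F) \): it is a union of path components of \( \aut(F) \), hence grouplike and satisfying the required cofibrancy or well-pointedness conditions. One then identifies oriented \( F \)-fibrations with fibrations equipped with a reduction of structure monoid to \( \aut^+(F) \). To do this, choose the identification \( F\simeq E_b \) so that it carries \( [F] \) to the given orientation of \( E_b \). Paths then act through orientation-preserving maps, so the monodromy lands in \( \aut^+(F) \). The fibre-homotopy equivalences in the definition are orientation-preserving, which matches the equivalence relation in May's theorem for the submonoid.
  \item Alternatively, observe that \( \B\aut^+(F)\ra\B\aut(F) \) is the covering space associated to the index \( \le 2 \) subgroup \( \pi_0\aut^+(F)\le\pi_0\aut(F) \). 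Then use that an orientation of a fibration over \( B \) is the same as a trivialisation of the monodromy local system \( \mathcal{H}_m \) compatible with \( [F] \), which is a lift of the classifying map to this cover. The orientation character of \( \mathcal{H}_m \) classifies this covering.
\end{enumerate}

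The main obstacle is this orientation bookkeeping. Specifically, one has to check that oriented equivalence classes correspond to homotopy classes of lifts, and not to lifts up to deck transformation. This holds because the orientation is part of the data and the equivalences are required to preserve it. Naturality in \( B \) again follows from compatibility with pullback.
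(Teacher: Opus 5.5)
Your proposal is correct and is essentially the paper's argument. The paper gets the oriented statement by applying \cite[Theorem 9.2]{May1975fibrations} to the category of fibres whose objects are Poincar\'e duality spaces homotopy equivalent to \( F \) and whose morphisms are orientation-preserving maps; this is your approach~(1), with May's category-of-fibres formalism doing the bookkeeping you do by hand, so your covering-space alternative~(2) is not needed. It treats the bundle statement as classical, citing \cite[Theorem 9.10]{May1975fibrations}.
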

\begin{proof}
  The statement about fibrations follows from~\cite[Theorem 9.2]{May1975fibrations} applied to the category of fibres consisting of Poincaré duality spaces with the homotopy type of \( F \), and orientation-preserving maps; see also~\cite[Corollary 9.5]{May1975fibrations}.
  The statement about fibre bundles is classical, and is also recovered in~\cite[Theorem 9.10]{May1975fibrations}.
\end{proof}

Let \( M \) be an oriented closed smooth manifold.
\begin{definition}
A smooth oriented \( M \)-bundle is a numerable fibre bundle with fibre \( M \) and structure group \( \Diff^+(M) \), the topological group of orientation-preserving diffeomorphisms of \( M \), equipped with the smooth Whitney topology.
\end{definition}

To a map of spaces \( p\cl E\ra \B\pi \), where \( \pi \) is a topological group, we may associate the pullback
\[
  \begin{tikzcd}
    \operatorname{F}(E)
    \ar[r]
    \ar[d]
    &E
    \ar[d,"p"]
    \\
    \mathrm{E}\pi
    \ar[r]
    &\B\pi
  \end{tikzcd}
\]
with the action of \( \pi \) arising from that on \( \mathrm{E}\pi=\B(\pi,\pi,*) \) by left multiplication on \( \pi \), which is simultaneously considered as a right \( \pi \)-space by right multiplication.
The map \( \mathrm{E}\pi\ra\B\pi \) is a numerable principal \( G \)-bundle~\cite[Theorem 8.2]{May1975fibrations}.
In particular, \( \operatorname{F}(E) \) is the homotopy fibre of \( p \).

\begin{definition}
  We call the \( \pi \)-space \( \operatorname{F}(E) \) the monodromy action of \( p \).
\end{definition}

The map \( \mathrm{E}\pi\ra \B\pi \) may be identified with the path-space fibration over \( \B\pi \)~\cite[Proposition 8.7]{May1975fibrations}.
The path-space has an action up to homotopy of \( \pi \), which agrees with the \( \pi \)-action on \( \mathrm{E}\pi \).
If \( p \) is a fibration, then the map \( F\ra \operatorname{F}(E) \) from its fibre is a weak homotopy equivalence, and it is compatible with the actions up to homotopy of \( \pi \).

Conversely, to a space \( X \) with an action of \( \pi \) we may associate the map on homotopy orbits
\[
  X_{\h\pi}=\B(*,\pi,X)
  \lra
  \B\pi.
\]
We call a map of \( \pi \)-spaces a weak equivalence if it is a weak homotopy equivalence between the underlying spaces.
These functors provide an equivalence of homotopy theories, but in this paper we only use the proof of following statement.

\begin{theorem}\label{spaces/BG = G-spaces}
  The homotopy fibre and homotopy orbits functors induce an equivalence between the homotopy category of spaces over \( \B\pi \) and the homotopy category of \( \pi \)-spaces.
\end{theorem}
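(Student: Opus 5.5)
We will show that the two functors \( \operatorname{F}(-) \) (homotopy fibre, formed by pulling back along \( \mathrm{E}\pi\ra\B\pi \)) and \( (-)_{\h\pi}=\B(*,\pi,-) \) preserve weak equivalences. Then we exhibit natural zig-zags of weak equivalences connecting each composite to the identity. Throughout, a map of spaces over \( \B\pi \) counts as a weak equivalence when it is a weak homotopy equivalence of total spaces. Both functors evidently preserve weak equivalences. For \( (-)_{\h\pi} \), a \( \pi \)-map that is a weak equivalence induces a levelwise weak equivalence of the simplicial spaces \( \B_\bullet(*,\pi,X)=\pi^{\bullet}\times X \). These simplicial spaces are proper, since we may assume \( \pi \) is well-pointed, replacing it by its whiskering if necessary, as in \cite{May1975fibrations}. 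So the realisations are weakly equivalent. For \( \operatorname{F} \), note that \( \mathrm{E}\pi\ra\B\pi \) is a numerable bundle, hence a fibration. The pullback along it therefore computes the homotopy fibre, and it preserves weak equivalences over \( \B\pi \) by the five lemma applied to the long exact sequences, checked on each component.

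For the composite \( \operatorname{F}(X_{\h\pi}) \), use \( \mathrm{E}\pi=\B(\pi,\pi,*) \). The pullback of \( \B(*,\pi,X)\ra\B(*,\pi,*) \) along \( \B(\pi,\pi,*)\ra\B(*,\pi,*) \) is identified with \( \B(\pi,\pi,X) \). This holds because realisation of simplicial spaces commutes with pullbacks in compactly generated weakly Hausdorff spaces, and levelwise the square is \( \pi\times\pi^{k}\times X \) over \( \pi\times \pi^k \). The action of \( \pi \) on the left factor is compatible with this identification. The standard extra degeneracy then gives a \( \pi \)-equivariant map \( \B(\pi,\pi,X)\ra X \), namely the augmentation, which is a homotopy equivalence (\cite[\sec 7]{May1975fibrations}). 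This produces a natural weak equivalence \( \operatorname{F}(X_{\h\pi})\ra X \) of \( \pi \)-spaces.

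For the other composite, take \( p\cl E\ra\B\pi \). There is a natural map \( \operatorname{F}(E)_{\h\pi}=\B(*,\pi,\operatorname{F}(E))\ra \B(*,\pi,\mathrm{E}\pi) \), since \( \operatorname{F}(E)\subset \mathrm{E}\pi\times E \). Combined with the projection to \( E \), on which \( \pi \) acts trivially, this gives a map \( \operatorname{F}(E)_{\h\pi}\ra E \) over \( \B(*,\pi,\mathrm{E}\pi)\simeq\B\pi \). More cleanly, we use the zig-zag
\[
  E \longleftarrow \B(*,\pi,\operatorname{F}(E)) \lra \B(*,\pi,*)=\B\pi .
\]
The left map is the composite \( \B(*,\pi,\operatorname{F}(E))\ra\B(*,\pi,\mathrm{E}\pi\times E)\ra \mathrm{E}\pi_{\h\pi}\times E\ra E \). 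To see that it is a weak equivalence, compare homotopy fibres over \( \B\pi \). By the previous paragraph, the homotopy fibre of the source is \( \operatorname{F}(E) \), and that of \( E \) is also \( \operatorname{F}(E) \). One checks that the induced map on fibres is the identity up to homotopy, and the long exact sequences then finish the argument. A technical point is that this map is only over \( \B\pi \) up to the equivalence \( \mathrm{E}\pi_{\h\pi}\simeq\B\pi \). We handle this by working throughout with spaces over \( \B(*,\pi,\mathrm{E}\pi) \) and using the equivalence \( \B(*,\pi,\mathrm{E}\pi)\ra\B\pi \).

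The main obstacle is this last step: producing a genuinely natural map, or zig-zag, over \( \B\pi \) rather than over a weakly equivalent base, and checking that the induced map on homotopy fibres is the identity. Everything else is a formal property of the bar construction. If the direct comparison is awkward, I would first replace \( p \) by a fibration. Then, using naturality of \( \operatorname{F} \), I would reduce to the pulled-back situation \( \mathrm{E}\pi\times_{\B\pi}E \), where the identification is the same augmentation argument as above. Together with the first two steps, this shows both composites are naturally isomorphic to the identity in the homotopy categories, giving the claimed equivalence.
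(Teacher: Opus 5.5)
Your outline matches the paper's proof, which consists of citations to May for exactly your three ingredients: Proposition~7.3 for preservation of weak equivalences, Proposition~7.5 for $\operatorname{F}(X_{\h\pi})\to X$, and Remarks~8.9 for $\operatorname{F}(E)_{\h\pi}\to E$. Your first two steps are correct.

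The third step, which you flag yourself, is not actually proved, and the sketch does not work as written. Let $c_1,c_2\colon\B(*,\pi,\mathrm{E}\pi)\to\B\pi$ be the maps induced by $\mathrm{E}\pi\to *$ and by the $\pi$-invariant projection $q\colon\mathrm{E}\pi\to\B\pi$, respectively.
\begin{itemize}
\item The structure map of $\operatorname{F}(E)_{\h\pi}$ is $c_1$, composed with $\B(*,\pi,\operatorname{F}(E))\to\B(*,\pi,\mathrm{E}\pi)$.
\item Your map to $E$ is compatible with the maps to $\B\pi$ only if the source is given $c_2$ instead.
\item The homotopy fibre you import from your second step is the one for $c_1$. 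So ``compare homotopy fibres and check the induced map is the identity up to homotopy'' silently assumes the comparison between $c_1$ and $c_2$, which is the whole issue.
\end{itemize}
Working over $\B(*,\pi,\mathrm{E}\pi)$ does not help, since $E$ does not map to that space. The fallback through a fibrant replacement of $p$ is too vague to assess.

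Two ingredients are missing.

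\emph{First, the map is a weak equivalence.} Realisation commutes with pullbacks and $\pi$ acts trivially on $E$, so $\operatorname{F}(E)_{\h\pi}\cong\B(*,\pi,\mathrm{E}\pi)\times_{\B\pi}E$, with the fibre product formed along $c_2$ and $p$. Your map is the projection to $E$. Under $\B(*,\pi,\B(\pi,\pi,*))\cong\B(\B(*,\pi,\pi),\pi,*)$, the map $c_2$ becomes the bundle associated to $\mathrm{E}\pi\to\B\pi$ with contractible fibre $\B(*,\pi,\pi)$. Its pullback along $p$ is therefore a fibration with contractible fibres. Hence $\operatorname{F}(E)_{\h\pi}\to E$ is a weak equivalence, with no fibre comparison needed.

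\emph{Second, a natural zig-zag over $\B\pi$.} There is a homotopy $H\colon c_1\simeq c_2$ living on the single space $\B(*,\pi,\mathrm{E}\pi)$, hence independent of $E$. For instance, the diagonal $\pi^m\times\pi\times\pi^m$ of the underlying bisimplicial space is the nerve of a topological category. On it, $c_1$ and $c_2$ are the nerves of the functors $(a,h,b)\mapsto a$ and $(a,h,b)\mapsto b$, which are related by a natural transformation. With $\mathrm{pr}\colon\operatorname{F}(E)_{\h\pi}\to\B(*,\pi,\mathrm{E}\pi)$, you get a zig-zag of weak equivalences over $\B\pi$, natural in $E$:
\[
  (\operatorname{F}(E)_{\h\pi},c_1\circ\mathrm{pr})\to(\operatorname{F}(E)_{\h\pi}\times[0,1],H\circ(\mathrm{pr}\times\mathrm{id}))\leftarrow(\operatorname{F}(E)_{\h\pi},c_2\circ\mathrm{pr})\to(E,p).
\]
This is the isomorphism in the homotopy category that the paper takes from May's Remarks~8.9.
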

\begin{proof}
  The functors \( \operatorname{F} \) and \( \B \) preserve weak equivalences, the latter by Proposition 7.3.
  There are natural weak equivalences \( \operatorname{F}(X_{\h\pi})\ra X \), and \( \operatorname{F}(E)_{\h \pi}\ra E \) by Proposition 7.5 and Remarks 8.9, respectively.
\end{proof}

\begin{lemma}\label{conjugation=monodromy}
  Let \( 1\ra G' \ra G \ra[p] G'' \ra 1 \) be a split short exact sequence of group-like topological monoids.
  Suppose that \( G'' \) is a topological group.
  Then the conjugation action of \( G'' \) on \( \B G' \) models the monodromy action of \( \B p \).
\end{lemma}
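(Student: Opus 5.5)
We use the splitting to write \( G \) as a semidirect product up to homotopy and then compare homotopy orbits. Let \( s\cl G''\ra G \) be the splitting. Since \( G'' \) is a group acting on \( G' \) by conjugation \( g''\cdot g' = s(g'')\,g'\,s(g'')^{-1} \), the multiplication map \( G'\rtimes G''\ra G \), \( (g',g'')\mapsto g' s(g'') \), is a map of topological monoids over \( G'' \). It is a weak equivalence: on fibres over \( G'' \) it is the identity of \( G' \), and both projections to \( G'' \) are quasifibrations, with \( G' \) the fibre of \( p \) by exactness.

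Next we identify the classifying space of the semidirect product with homotopy orbits. For a semidirect product \( G'\rtimes G'' \) with \( G'' \) a group, there is a natural homeomorphism, or at least a natural weak equivalence over \( \B G'' \),
\[
  \B(G'\rtimes G'')\cong \B(*,G'',\B G')=(\B G')_{\h G''}.
\]
Here \( G'' \) acts on \( \B G' \) by functoriality of the bar construction applied to the conjugation automorphisms. This can be seen simplicially. The simplicial space with \( q \)-simplices \( (G'\times G'')^{q} \) is the diagonal of the bisimplicial space whose \( (p,q) \)-simplices are \( G''^{\,p}\times G'^{\,q} \), with the face maps twisted by the action. The diagonal realization equals the iterated realization, which is \( \B(*,G'',\B G') \). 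The projection \( G'\rtimes G''\ra G'' \) corresponds to the collapse \( \B G'\ra * \).

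Combining the two steps gives a chain of weak equivalences over \( \B G'' \),
\[
  (\B G')_{\h G''}\simeq \B(G'\rtimes G'')\xrightarrow{\ \sim\ } \B G .
\]
The structure map of the last space is \( \B p \). By \cref{spaces/BG = G-spaces}, two spaces over \( \B G'' \) that are weakly equivalent over \( \B G'' \) have weakly equivalent monodromy actions, as \( G'' \)-spaces. Also, the natural equivalence \( \operatorname{F}(X_{\h G''})\ra X \) identifies the monodromy action of \( X_{\h G''}\ra\B G'' \) with \( X \). Taking \( X=\B G' \) with the conjugation action therefore shows that \( \operatorname{F}(\B G) \) is weakly equivalent to \( \B G' \) with the conjugation action, which is the claim.

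The main obstacle is the bookkeeping in the second step: checking that the bisimplicial diagonal really realizes to May's two-sided bar construction \( \B(*,G'',\B G') \) compatibly with the maps to \( \B G'' \). We also need May's properness and cofibrancy hypotheses, such as nondegenerate basepoints, so that levelwise weak equivalences of simplicial spaces realize to weak equivalences. I would check this with the explicit simplicial formulas from \cite[\sec 7]{May1975fibrations}, and appeal to the standing assumption that all spaces have the homotopy type of CW complexes where needed.
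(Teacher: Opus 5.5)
Your proof is correct, but it takes a different route from the paper. The paper works with the homotopy fibre directly. It identifies \( \operatorname{F}(\B G)\cong\B(G'',G,*) \) and notes that \( \B G'\ra\B(G'',G,*) \) is a weak equivalence. This map is equivariant for conjugation on \( \B G' \) and for the twisted action \( \B(\ell_g\circ\mathrm{r}_g,\mathrm{c}_g) \) on the target. The paper then removes the twist in two steps. First it uses \cite[Lemma 3.18]{BerglundZeman2024algebraicmodels}: the action \( \B(\mathrm{r}_g,\mathrm{c}_g) \) is equivariantly equivalent to the trivial one. Second, it asserts a compatibility with the actions \( \B(\ell_g,\id) \), which is left to the reader.

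You instead use the splitting globally. You replace \( G \) by \( G'\rtimes G'' \) and identify \( \B(G'\rtimes G'') \) with the Borel construction \( (\B G')_{\h G''} \) over \( \B G'' \). Then \cref{spaces/BG = G-spaces} transports the monodromy actions. Your route is more self-contained, with no external lemma and no unchecked compatibility, and it gives explicit identifications over \( \B G'' \). The paper's route stays with the actual fibre \( \B(G'',G,*) \) and isolates the conceptual reason: inner automorphisms act trivially up to coherent homotopy.

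Two points make your argument cleaner than you fear.
\begin{enumerate}
\item Since \( G'' \) is a group, \( (g',g'')\mapsto g's(g'') \) is an isomorphism of topological monoids, with inverse \( g\mapsto(g\,s(p(g)^{-1}),p(g)) \). So you need neither the unjustified claim that \( p \) is a quasifibration nor any properness hypotheses.
\item In the bisimplicial step the identification is not the naive one. With May's convention that the last face acts on \( \B G' \), send \( (h_1,\dotsc,h_n;k_1,\dotsc,k_n) \) in the diagonal to \( \bigl((h_i\cdots h_n)\cdot k_i,\,h_i\bigr)_{i} \) in the nerve of \( G'\rtimes G'' \). This map commutes with all faces and degeneracies and with the projections to \( \B G'' \).
\end{enumerate}
Consequently both of your identifications are homeomorphisms over \( \B G'' \), not merely weak equivalences.
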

\begin{proof}
  We have that the monodromy of \( \B p \) is the \( G'' \)-space
  \[
    \B(G'',G'',*)\times_{\B G''}\B G \cong\B(G'',G,*)
  \]
  because geometric realisation preserves finite limits.
  The map \( \B G'\ra \B(G'',G,*) \) is a \( G'' \)-equivariant weak homotopy equivalence for the conjugation action on the domain, and the action on the codomain defined as follows.
  An element \( g \) of \( G'' \) acts by the map \( \B(\ell_{g}\circ \mathrm{r}_{g},\mathrm{c}_{g}) \) induced by the following group homomorphism, and maps of right \( G \)-spaces
   \[
     G\ra[\mathrm{c}_{g}] G,\quad h\mapsto ghg^{-1},\qquad G''\ra[\mathrm{r}_{g}] \operatorname{c}_{g}^{*}(G''),\quad y\mapsto yg^{-1},\qquad G''\ra[\ell_{g}] \operatorname{c}_{g}^{*}(G''),\quad y\mapsto gy.
   \]

   By~\cite[Lemma 3.18]{BerglundZeman2024algebraicmodels}, there is a zig-zag of \( G'' \)-equivariant homotopy equivalences between \( \B(G'',G,*) \) with the action \( \B(\mathrm{r}_{g},\mathrm{c_{g}}) \), and the same space with the trivial action.
   One can check that it can be made a \( G'' \)-equivariant zig-zag with the actions \( \B(\ell_g,\id) \).
\end{proof}

We end this section by introducing notation for covers of \( \B\aut(F) \) and \( \B\Diff(M) \).

Let \( \Gamma(F) \) be the group of automorphisms of the integral homology of \( F \) realised by self homotopy equivalences.
Given a subgroup \( K\le\Gamma(F) \), we denote by \( \aut^K(F) \) the topological submonoid of \( \aut(F) \) consisting of those path components of maps that induce an element of \( K \) in homology.
Its classifying space is the homotopy pullback
\[
  \begin{tikzcd}
    \B\aut^K(F)
    \ar[r]
    \ar[d]
    &\B\aut(F)
    \ar[d]
    \\
    \B K
    \ar[r]
    &\B\Gamma(F).
  \end{tikzcd}
\]
When \( K=\{\id\} \), we instead write \( \B\htor(F) \), and we call it the homotopy Torelli space.

Similarly, we use the notation
\[
  \B\Diff^K(M)\quad\text{and}\quad\B\Tor(M)
\]
for the analogously defined classifying spaces, and we call \( \B\Tor(M) \) the Torelli space.

\section{Rational dg Lie algebra models for spaces}\label{sec:geom-real-diff}
We present the geometric realisation functors that we use from appropriate  subcategories of differential graded Lie algebras (\( \mathsf{dgl} \)), and commutative differential graded algebras (\( \mathsf{cdga} \)), all over the rational numbers.
These functors take values in the category of simplicial sets.
We may compose them with the geometric realisation functor \( |-| \) to the category of compactly generated weakly Hausdorff spaces.

An action of a dg Lie algebra \( \ggg \) on a dg Lie algebra \( L \) is a dg Lie algebra homomorphism
\[
  \rho
  \cl
  \ggg
  \lra
  \Der(L)
\]
to the dg Lie algebra of derivations of \( L \).
The semidirect product of \( \ggg \) and \( L \) with respect to \( \rho \) is the dg Lie algebra \(L\rtimes\ggg\) with underlying chain complex \( L\oplus\ggg \), and the bracket given by
\begin{gather*}
  [(\theta,x),(\delta,y)]=([\theta,\delta],[x,y]+\rho(\theta)(x)-{(-1)}^{|y||\delta|}\rho(\delta)(y)).
\end{gather*}
The action is nilpotent if for every degree \( m \) there is a natural number \( k \) such that the \( k \)th composition \( \rho{(\ggg_0)}^{\circ k}(L_{m}) \) is zero; here, \( \ggg_0 \) is the degree zero part of \( \ggg \).
We say that \( \ggg \) is nilpotent if its adjoint action on itself is nilpotent.
That is, if its lower central series terminates at a finite stage in each degree.
Given a dg Lie algebra \( \ggg \) with differential \( d \), we denote by \( \ggg\langle{0}\rangle \) the dg Lie subalgebra
\[
  \ggg\langle{0}\rangle_{k}=
  \left\{
  \begin{array}{lll}
    \ggg & \text{if \( k>0 \)},\\
    \mathrm{Z}_{0}(\ggg) & \text{if \( k=0 \)},\\
    0& \text{if \( k<0 \)},
  \end{array}
  \right.
\]
where \( \mathrm{Z}_0(\ggg) \) are the \( 0 \)-cycles.
We say that \( \ggg \) is connected if \( \ggg\langle{0}\rangle =\ggg \).

If \( \ggg \) is a nilpotent Lie algebra, we denote by \( \exp\ggg \) the nilpotent group with the same underlying set as \( \ggg \), and with the group operation given by the Baker--Campbell--Hausdorff formula.
Following \cite[\sec 3.2]{Berglund2020autfiberbundles}, given a connected nilpotent dg Lie algebra \( \ggg \), we define a simplicial nilpotent group by
\[
  \exp_{\bullet}\ggg=\exp \mathrm{Z}_0(\ggg\otimes\Omega_{\bullet})
\]
where \( \Omega_{\bullet} \) is the simplicial cdga that in degree \( n \) consists of polynomial differential forms on the standard \( n \)-simplex.
We obtain a nilpotent topological group \(|\exp_{\bullet}\ggg|\) by applying geometric realisation, because \( |-| \) preserves finite products.

For the remainder of this section, we let \( \ggg \) and \( L \) be connected nilpotent dg Lie algebras.
A nilpotent action of \( \ggg \) on \( L \) induces a simplicial group action of \( \exp_{\bullet}\ggg \) on the simplicial dg Lie algebra \( L\otimes\Omega_{\bullet} \) by taking the exponential series of a derivation \( \theta \),
\[
  e^{\theta}(x)= x+ \sum_{n\ge1}\frac{1}{n}\rho{(\theta)}^{\circ n}(x).
\]
Thus, it acts on \( \B\exp_{\bullet}(L) \).

Similarly, a nilpotent action of \( \ggg \) on a cdga \( A \) by derivations induces an action of \( \exp_{\bullet}\ggg \) on the realisation of \( A \)
\[
  \langle{A}\rangle
  =\Hom_{\mathsf{cdga}}(A,\Omega_{\bullet}) \cong\Hom_{\mathsf{cdga}(\Omega_{\bullet})}(\Omega_{\bullet}\otimes A,\Omega_{\bullet})
\]
by precomposition by the automorphisms of \( \Omega_{\bullet}\otimes A \), as a simplicial cdga over \( \Omega_{\bullet} \), obtained by taking the exponential series of a derivation.
We denote by \( \CE(\ggg) \) the Chevalley--Eilenberg cochains algebra, and by \( \CE(\ggg;A) \) that with coefficients in \( A \).
See~\cite{Berglund2022characteristicClassesFamiliesBundles} for the definition.
\begin{definition}
  A nilpotent action of \( \mathfrak{g} \) on \( L \) (respectively, \( A \)) models the action of a topological monoid \( G \) on a space \( X \) if \( (G,X) \) is rational homology equivalent to
  \[
    (|\exp_{\bullet}(\ggg)|,|\B\exp_{\bullet}(L)|)
    \qquad\text{(respectively, \( (|\exp_{\bullet}(\ggg)|,\langle A\rangle)  \))}.
  \]
  in the category of triples defined in \cref{sec:class-fibr-1}.
\end{definition}

\begin{proposition}
  \label{exp and conjugation}\label{B and homotopy orbits}
  Suppose that \( A \) is degreewise finite dimensional.
  A nilpotent action \( \rho \) of \( \ggg \) on \( L \) (respectively \( A \)) is a model for the monodromy of the induced map
  \[
    \B \exp_{\bullet}(L\rtimes\ggg)
    \lra
    \B\exp_{\bullet}(\ggg)
    \qquad\text{(respectively, \( \langle \CE(\ggg;A)\rangle \ra \langle{\CE(\ggg)}\rangle  \))}.
  \]
\end{proposition}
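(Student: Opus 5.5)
For the dg Lie case, I would reduce the statement to \cref{conjugation=monodromy}. Set \( G=\exp_{\bullet}(L\rtimes\ggg) \), \( G'=\exp_{\bullet}(L) \) and \( G''=\exp_{\bullet}(\ggg) \). These are simplicial nilpotent groups, and after realisation they become topological groups. The first step is to check that the split short exact sequence of connected nilpotent dg Lie algebras \( 0\to L\to L\rtimes\ggg\to\ggg\to0 \) stays split exact after applying \( \exp\mathrm{Z}_0(-\otimes\Omega_\bullet) \). Both functors are left exact and commute with direct sums of chain complexes, so the underlying simplicial sets satisfy \( G\cong G'\times G'' \).

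Next I would identify the group law. The BCH product restricted to \( \ggg\otimes\Omega_\bullet \) and \( L\otimes\Omega_\bullet \) gives \( G'' \) and \( G' \). The conjugation of \( e^{\theta}\in G'' \) on \( G' \) inside \( G \) is \( e^{\mathrm{ad}_\theta} \), and on \( L \) this is exactly \( e^{\rho(\theta)} \). So \( G\cong G'\rtimes G'' \), with \( G'' \) acting through the exponentiated action used to define the action on \( \B\exp_\bullet(L) \). Realisation preserves finite products, so this semidirect decomposition passes to \( |G| \). Applying \cref{conjugation=monodromy} then shows that the conjugation action of \( |G''| \) on \( \B|G'| \), which is the induced action on \( |\B\exp_\bullet L| \), models the monodromy of \( \B|G|\to\B|G''| \). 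Geometric realisation commutes with \( \B \), which lets us pass from \( \B\exp_\bullet(L\rtimes\ggg) \) to \( \B|G| \).

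For the cdga case I would use \cref{spaces/BG = G-spaces}. It suffices to give a weak equivalence of spaces over \( \langle\CE(\ggg)\rangle \) between \( \langle A\rangle_{\h G''} \) and \( \langle\CE(\ggg;A)\rangle \). The monodromy of the latter is then \( \operatorname{F}(\langle A\rangle_{\h G''})\simeq\langle A\rangle \) with the given action. Here I would use the standard facts that \( \langle\CE(\ggg)\rangle\simeq\B\exp_\bullet\ggg \) for connected nilpotent \( \ggg \), and that the twisted cochains \( \CE(\ggg;A)=\CE(\ggg)\otimes_\tau A \) realise to the Borel construction. This is \cite{Berglund2022characteristicClassesFamiliesBundles}. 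To make it concrete, I would construct a map \( \B(*,G'',\langle A\rangle)\to\langle\CE(\ggg;A)\rangle \) levelwise from the universal twisting cochain. I would then check that it is an equivalence on fibres: the fibre of \( \langle\CE(\ggg;A)\rangle\to\langle\CE(\ggg)\rangle \) is \( \langle A\rangle \), and this uses that \( A \) is degreewise finite dimensional, so that realisation of the relative Sullivan extension behaves well.

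I expect the main obstacle to be this cdga step. One has to match the \( G'' \)-action by exponentiated derivations precomposed on \( \Hom(\Omega_\bullet\otimes A,\Omega_\bullet) \) with the holonomy of the realised relative Sullivan algebra, that is, to show that the fibre inclusion is equivariant up to coherent homotopy. The first thing I would try is the trick from the dg Lie case: use Quillen duality to replace \( A \) by a nilpotent dg Lie model \( L \) with compatible \( \ggg \)-action, so that \( \CE(\ggg;A)\simeq\CE(L\rtimes\ggg) \). This would reduce the cdga case to the first case, with the finiteness assumption on \( A \) ensuring that the duality is an equivalence.
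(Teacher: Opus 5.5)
Your dg Lie argument is the paper's proof. It uses split exactness of \(1\to\exp_{\bullet}(L)\to\exp_{\bullet}(L\rtimes\ggg)\to\exp_{\bullet}(\ggg)\to 1\) from left exactness of \(\mathrm{Z}_0(-\otimes\Omega_{\bullet})\), then the Campbell identity \(x*y*x^{-1}=e^{\mathrm{ad}_x}(y)\) with \(\mathrm{ad}_x=\rho(x)\), and then \cref{conjugation=monodromy}. For the cdga case the paper gives no argument of its own; it only cites \cite[Proposition 3.6]{Berglund2022characteristicClassesFamiliesBundles}. Your appeal to the same reference is therefore what actually carries that half, and it suffices.

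The fallback you propose for the step you flag as the main obstacle cannot work, though. For any \(L\) with a nilpotent \(\ggg\)-action, the splitting \(\ggg\to L\rtimes\ggg\) gives a section of \(\B\exp_{\bullet}(L\rtimes\ggg)\to\B\exp_{\bullet}(\ggg)\), since the action on \(\B\exp_{\bullet}(L)\) fixes the basepoint. The cdga side has no section in general, and it fails exactly in the case the paper needs in \cref{action models}. Take \(A=\Lambda(y)\) and \(\ggg=\QQ\langle\epsilon\rangle\) acting by \(\partial/\partial y\). Then \(\CE(\ggg;A)=\Lambda(e)\otimes\Lambda(y)\) with \(dy=e\) is quasi-isomorphic to \(\QQ\), so \(\langle\CE(\ggg;A)\rangle\) is contractible while \(\langle\CE(\ggg)\rangle\simeq K(\QQ,n+1)\). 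Hence there is no section even up to homotopy, and no \(L\) with a strict \(\ggg\)-action can give \(\CE(\ggg;A)\simeq\CE(L\rtimes\ggg)\) over \(\CE(\ggg)\).

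Algebraically, a positive-degree derivation such as \(\partial/\partial y\) sends \(y\) to \(1\). So it does not preserve the augmentation ideal and does not induce an action on a Quillen model of the pointed space \(\langle A\rangle\). Semidirect products only model actions with a fixed point, i.e.\ fibrations with a section, which is exactly what the Euler class obstructs here. Also, degreewise finiteness of \(A\) does not make \(\langle A\rangle\) connected or nilpotent, so Quillen duality is not even available in the stated generality.

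A self-contained proof of the cdga half would have to compare the Borel construction of \(\langle A\rangle\) with \(\langle\CE(\ggg;A)\rangle\) directly on the cdga side. It would also first need to identify \(\langle\CE(\ggg)\rangle\simeq\B|\exp_{\bullet}\ggg|\), since the two total spaces do not a priori live over the same base.
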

\begin{proof}
  See~\cite[Proposition 3.6]{Berglund2022characteristicClassesFamiliesBundles} for the statement about cdga's.

  A proof using the functor \( \exp_{\bullet} \) that we use in this paper is the following.
  We have a split short exact sequence
  \[
    \begin{tikzcd}
      1
      \ar[r]
      &\exp_{\bullet}(L)
      \ar[r]
      &\exp_{\bullet}(L\rtimes\ggg)
      \ar[r,shift left,"p"]
      &\exp_{\bullet}(\ggg)
      \ar[l,shift left]
      \ar[r]
      &1
    \end{tikzcd}
  \]
  because \( \mathrm{Z}_0(-\otimes\Omega_{\bullet}) \) is left exact and functorial.
  Let \( x \) and \( y \) be \( q \)-simplices of \( \exp_{\bullet}(\ggg) \) and \( \exp_{\bullet}(L) \), respectively.
  The Campbell identity
  \[
    x*y*x^{-1}=e^{\mathrm{ad}_{x}}(y)
  \]
  shows that conjugating \( y \) by \( x \) in \( \exp_{\bullet}(L\rtimes\ggg) \) agrees with the action of \( x \) on \( y \), because the bracket in the semidirect product is defined so that \( \mathrm{ad}_{x}=\rho(x) \).
  The result now follows by \cref{conjugation=monodromy}.
\end{proof}

\begin{remark}
  By the proof of \cref{conjugation=monodromy}, and the universal property of the semidirect product we obtain a natural isomorphism of simplicial groups
  \[
    \exp_{\bullet}(L)\rtimes\exp_{\bullet}(\ggg)
    \lra
    \exp_{\bullet}(L\rtimes\ggg)
  \]
  that it is given degreewise by sending \( (y,x) \) to \( y*x \).
\end{remark}
\section{Group cohomology}\label{sec:group-cohom-homol}

In this section, we provide the necessary preliminaries on group cohomology.
We refer to \cite{Brown1994groupCohomology} for more details.
Note that the group cohomology of the discrete group \( \Gamma \) agrees with the singular cohomology of \( \B\Gamma \).

Let \( R \) be a commutative ring.
Our convention is that modules over the group ring \( R[\Gamma] \) will be left modules. We denote by \( \Cat{Mod}_{R[\Gamma]} \) their category.
Given an \( R[\Gamma] \)-module \( V \), consider the \( R \)-modules
\begin{equation*}
  V^{\Gamma}=\{v\in V \mid \forall \gamma\in\Gamma,\ \gamma\cdot v=v \}\quad\text{and}\quad V_{\Gamma}= V/\langle{\gamma\cdot v-v\mid \gamma\in\Gamma,\ v\in V}\rangle
\end{equation*}
of its invariants and coinvariants, respectively.
Here, the brackets \( \langle{-}\rangle \) denote the submodule generated by a set.
These constructions are functorial.
For each natural number \( k \), the \( k \)th cohomology and homology functors of \( \Gamma \) over \( R \) are defined as the \( k \)th derived functors
\begin{equation*}
  \H^k(\Gamma;-),\ \H_k(\Gamma;-)\cl\Cat{Mod}_{R[\Gamma]}\lra \Cat{Mod}_R
\end{equation*}
of the invariants and coinvariants functors, respectively.

In particular, \( \H^0(\Gamma;V)=V^{\Gamma} \) and \( \H_0(\Gamma;V)=V_{\Gamma} \).
The first cohomology group can be described as follows.
Recall that a derivation for a left \( R[\Gamma] \)-module \( V \) is a function \( d\cl \Gamma\ra V \) such that \( d(g\cdot h)=d(g)+g\cdot d(h) \) for every \( g,h\in \Gamma \).
There is an isomorphism
\begin{equation*}
  \label{eq:derivations}
  \H^1(\Gamma;V)\cong\operatorname{Der}_R(\Gamma,V)/\operatorname{P}(\Gamma,V)
\end{equation*}
with the quotient of the \( R \)-module of derivations by the submodule of derivations of the form \( d(g)=v-g\cdot v \) for some \( v\in V \).

Given a subgroup \( G\le\Gamma \), there are natural transformations
\begin{align*}
  \operatorname{res}^{\Gamma}_G\cl &\H^{k}(\Gamma;-) \lra \H^k(G;-),
  \\
  \operatorname{cores}^{\Gamma}_G\cl &\H_{k}(G;-) \lra \H_{k}(\Gamma;-),
\end{align*}
as explained in~\cite[\nopp III.8]{Brown1994groupCohomology}.
Group cohomology is functorial in pairs of a group homomorphism \( \varphi\cl G'\ra G \), and a map of \( R[G'] \)-modules \( V' \ra \varphi^{*}V \).

\begin{remark}\label{evaluation of invariants (algebraic)}
  A pair of elements \( \gamma\in\Gamma \) and \( v\in V \) such that \( \gamma\cdot v=v \) yields a homology class \( v\cap \gamma \) under the cap product
  \begin{equation}
    \cap\cl\H^k(G;V)\otimes_{R}\H_1(G;R)\lra \H_{1-k}(G;V)\label{eq:cap invariants}
  \end{equation}
  for \( k=0 \), and \( G=\langle{\gamma}\rangle\le\Gamma \) the subgroup generated by \( \gamma \).
  Following~\cite{GoldmanMillson1986decomposable}, we call homology classes of the form
  \[
    \operatorname{cor}^{\Gamma}_{\langle{\gamma}\rangle}(v\cap\gamma)
  \]
  in \( \H_1(\Gamma;V) \) decomposable homology classes.
\end{remark}

\section{Symmetric powers of the standard two-dimensional representation}\label{sec:symm-powers-stand}

In this section, we recall three results about the group cohomology of the symmetric powers of standard two-dimensional representation of \( \Gamma\le\SL_2(\ZZ) \) and its dual.
These are used in \cref{sec:decomp-bund} to prove \cref{intro:pairing decomposable fibrations}.
Precisely, we consider the following two representations.

Let \( \QQ[\epx,\epy] \) be the polynomial algebra in two variables with the action of \(
\begin{psmallmatrix}
  a&b\\c&d
\end{psmallmatrix}
\) on a polynomial \( f \) given by
\[
  \begin{psmallmatrix}
    a&b\\c&d
  \end{psmallmatrix}
  f(\epx,\epy)
  =
  f\left(
  (\epx, \epy)
  \begin{psmallmatrix}
    a&b\\c&d
  \end{psmallmatrix}
  \right)
  =f(a\epx+c\epy, b\epx+d\epy).
\]
The summand \( \QQ[\epx,\epy]_k \) consisting of polynomials of degree \( k \) is isomorphic to the \( k \)th symmetric power \( \operatorname{Sym}^k\QQ^{2} \) of the standard representation \( \QQ^2 \).

Let \( \QQ[\ex,\ey] \) be the polynomial algebra with the action of \(
\begin{psmallmatrix}
  a&b\\c&d
\end{psmallmatrix}
\) on a polynomial \( f \) given by
\[
  \begin{psmallmatrix}
    a&b\\c&d
  \end{psmallmatrix}
  f(\ex,\ey)
  =
  f\left(
    (\ex,\ey)
  \begin{psmallmatrix}
    d&-c\\-b&a
  \end{psmallmatrix}
  \right)
  =f(d\ex-b\ey,-c\ex+a\ey).
\]
The summand \( \QQ[\ex,\ey]_k \) is isomorphic to the \( k \)th symmetric power of the dual of the standard representation.

Let
\(
\gamma=
\begin{psmallmatrix}
  a&b\\c&d
\end{psmallmatrix}
\) be in \(\SL_2(\ZZ) \).
The element
\[
  s_{\gamma}=b\epx^{2}+(d-a)\epx\epy-c\epy^2
\]
is invariant under \( \gamma \).
It gives rise to a decomposable homology class as in \cref{evaluation of invariants (algebraic)}.

\begin{theorem}[{\cite[\nopp Theorem B]{GoldmanMillson1986decomposable}}]\label{decomposables generation}
  For each \( m\ge1 \), the set of decomposable classes
  \[
    \{ \operatorname{cor}^{\Gamma}_{\langle{\gamma}\rangle}(s^m_{\gamma}\cap\gamma) \mid \gamma \in\Gamma\}
  \]
  spans \( \H_1(\Gamma; \QQ[\epx,\epy]_{2m}) \).
\end{theorem}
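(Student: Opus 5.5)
The plan is to argue by duality and reduce to a statement about conjugation-invariant functions on a semidirect product. This mirrors the strategy of \cite{GoldmanMillson1986decomposable}.

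\textbf{Duality.} Write \( V_{2m}=\QQ[\epx,\epy]_{2m} \). Over \( \QQ \), the span of the decomposable classes is all of \( \H_1(\Gamma;V_{2m}) \) if and only if every class in
\[
  \H^1(\Gamma;V_{2m}^{*})=\Hom_{\QQ}(\H_1(\Gamma;V_{2m}),\QQ)
\]
that vanishes on all decomposable classes is zero. Here \( V_{2m}^{*}\cong\QQ[\ex,\ey]_{2m} \) via the invariant pairing, and both spaces are finite dimensional since \( \Gamma \) is finitely generated. Represent such a class by a derivation \( d\cl\Gamma\ra V_{2m}^{*} \), using the description of \( \H^1 \) by derivations modulo principal ones. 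Unwinding the cap product and the corestriction along \( \langle\gamma\rangle\le\Gamma \), the pairing of \( d \) with \( \operatorname{cor}^{\Gamma}_{\langle{\gamma}\rangle}(s_\gamma^m\cap\gamma) \) should be
\[
  F_d(\gamma)=\langle d(\gamma),s_\gamma^m\rangle .
\]
This is well defined on cohomology: changing \( d \) by the principal derivation \( v-\gamma v \) changes \( F_d(\gamma) \) by \( \langle v-\gamma v,s_\gamma^m\rangle \), which is zero because \( s_\gamma \) is \( \gamma \)-invariant. The statement therefore becomes:
\[
  F_d\equiv0 \ \text{on}\ \Gamma \quad\Longrightarrow\quad d \ \text{is principal}.
\]

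\textbf{Geometric reformulation.} The derivation \( d \) gives a homomorphism
\[
  \rho_d\cl\Gamma\lra V_{2m}^{*}\rtimes\SL_2,\qquad \gamma\mapsto(d(\gamma),\gamma),
\]
into an affine algebraic group. The class of \( d \) is trivial exactly when \( \rho_d \) is conjugate, by an element of \( V_{2m}^{*} \), to the standard inclusion \( \rho_0 \). The next step is to check that \( f_m(v,g)=\langle v,s_g^m\rangle \) is a conjugation-invariant regular function on \( V_{2m}^{*}\rtimes\SL_2 \). This should follow because \( s_{hgh^{-1}}=h\cdot s_g \), since \( g\mapsto s_g \) is the equivariant projection of \( g \) to \( \mathfrak{sl}_2\cong V_2 \), together with the translation invariance noted above.

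Now rescale: conjugating \( \rho_d \) by the central scalars \( t\in\mathbb{G}_m \) acting on \( V_{2m}^{*} \) gives \( \rho_{td} \). Hence \( \rho_0 \) lies in the orbit closure of \( \rho_d \), taken in the representation variety \( \Hom(\Gamma,V_{2m}^{*}\rtimes\SL_2) \) under the \( V_{2m}^{*} \)-conjugation action. The hypothesis \( F_d\equiv0 \) says that every function \( \rho\mapsto f_m(\rho(\gamma)) \) takes the same value on \( \rho_d \) and on \( \rho_0 \).

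\textbf{Main obstacle.} The hardest step is to show that these invariant functions, together with the \( \SL_2 \)-traces (which already agree), separate the \( V_{2m}^{*} \)-conjugacy class of \( \rho_0 \) from that of \( \rho_d \). The facts available are that \( \Gamma \) is Zariski dense in \( \SL_2 \), being of finite index in \( \SL_2(\ZZ) \), and that \( V_{2m}^{*} \) is an irreducible nontrivial \( \SL_2 \)-module, so \( \H^0 \) vanishes and the relevant orbits should be closed. I would first try a direct argument via the one-parameter family \( \rho_{td} \). Differentiating in \( t \) identifies \( F_d \) with the differential of the invariant functions at \( \rho_0 \) in the direction \( d \). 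The goal is then to show that the span of \( \{\gamma\mapsto s_\gamma^m\} \), restricted to hyperbolic elements, is large enough to detect the whole of \( \H^1 \).

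A concrete route uses the fact that a cocycle on the free group generated by hyperbolic elements is determined by its values on those elements. For hyperbolic \( \gamma \), the invariant vector \( s_\gamma^m \) spans \( (V_{2m})^{\gamma} \), because the fixed space is one-dimensional for hyperbolic elements. So \( F_d(\gamma)=0 \) says exactly that \( d(\gamma)\in(1-\gamma)V_{2m}^{*} \), that is, \( d \) restricted to each hyperbolic cyclic subgroup is principal. The remaining step is to glue these local primitives into a single \( v \) with \( d(\gamma)=v-\gamma v \) for all \( \gamma \). For this I would take products \( \gamma_1\gamma_2 \) that are again hyperbolic, write \( d(\gamma_i)=v_i-\gamma_iv_i \), and use the cocycle identity for \( \gamma_1\gamma_2 \) together with the genericity of fixed spaces to force the primitives \( v_i \) to agree. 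Zariski density of hyperbolic pairs in \( \SL_2\times\SL_2 \) is what should make this work.
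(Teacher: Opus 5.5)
\textbf{Verdict: there is a genuine gap.} The paper gives no argument for this statement; it imports Theorem~B of Goldman--Millson. So your sketch has to stand on its own, and its decisive step is missing.

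Your duality reduction is correct. So is your observation that for $\gamma$ of infinite order, $F_d(\gamma)=0$ holds exactly when $d|_{\langle\gamma\rangle}$ is principal. The theorem is therefore equivalent to: ``a cocycle that is principal on every cyclic subgroup is principal''. Neither of your two routes proves this.

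\emph{The geometric reformulation is circular.} Over the standard inclusion, the fibre of $\operatorname{Hom}(\Gamma,V^{*}\rtimes\SL_2)$ is $\mathrm{Z}^1(\Gamma;V^{*})$, where $V^{*}$ is your $V_{2m}^{*}$. On that fibre, the functions $\rho\mapsto f_m(\rho(\gamma))$ restrict to the linear functionals $d\mapsto F_d(\gamma)$. So ``these functions separate the $V^{*}$-orbits of $\rho_0$ and $\rho_d$'' is the theorem verbatim. The orbit-closure remark also points the wrong way. The $V^{*}$-orbit of $\rho_d$ is the closed affine space $\{\rho_{d'}\mid d'\in d+\mathrm{B}^1\}$. $\rho_0$ enters its closure only once you add the scalings, and then no invariant can separate the two.

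\emph{The gluing route is only a heuristic.} For two hyperbolic generators, the sets of primitives are affine lines in the $(2m+1)$-dimensional space $V^{*}$, and such lines generically do not meet. Each further word contributes just one scalar linear condition on the class of $d$. You never show these conditions are independent enough to force the lines to meet, and that independence is the entire content of the theorem. ``Genericity of fixed spaces'' and ``Zariski density of hyperbolic pairs'' do not supply it.

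\textbf{The missing idea} is to apply Zariski density to $\rho_d(\Gamma)$ itself.
\begin{enumerate}
\item Let $H$ be the Zariski closure of $\rho_d(\Gamma)$ in $G=V^{*}\rtimes\SL_2$ over $\mathbb{C}$. Since $\Gamma$ is Zariski dense in $\SL_2$, $H$ surjects onto $\SL_2$.
\item The closed subgroup $H\cap V^{*}$ of the vector group is a linear subspace. The identity $(w,g)(u,1)(w,g)^{-1}=(gu,1)$ shows it is $\SL_2$-stable, so it is $0$ or $V^{*}$ by irreducibility.
\item If $H\cap V^{*}=V^{*}$, then $H=G$. But $F_d\equiv 0$ places $\rho_d(\Gamma)$ in the zero locus of $f_m$. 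This is a non-zero regular function on the irreducible variety $G$ (evaluate at a diagonal hyperbolic $g$), a contradiction.
\item If $H\cap V^{*}=0$, then $H\to\SL_2$ is an isomorphism, so $H$ is the graph of a regular $1$-cocycle $c\colon\SL_2\to V^{*}$. The rational (algebraic-group) cohomology $\H^1(\SL_2;V^{*})$ vanishes by complete reducibility, so $c(g)=w-gw$. Hence $d=c|_{\Gamma}$ is principal, and $w$ can be taken rational because the linear system is defined over $\QQ$.
\end{enumerate}
This argument uses neither hyperbolic elements nor the virtual cohomological dimension.

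If you prefer to keep your invariant-function idea, it works once you pass to a reductive group. Embed $V^{*}$ in $\mathfrak{sl}_{2m+1}$ via $\operatorname{Sym}^{2m}$. The derivative of $\rho\mapsto\operatorname{tr}\rho(\gamma)$ along a $V^{*}$-valued cocycle $d$ is then a multiple of $F_d(\gamma)$. Trace functions generate the coordinate ring of the character variety. Since $\Gamma$ is virtually free, that variety is smooth at the symmetric-power representation, with tangent space $\H^1(\Gamma;\mathfrak{sl}_{2m+1})$. So these differentials span the cotangent space, which restricts onto $\H^1(\Gamma;V^{*})^{*}$.
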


\begin{definition}\label{def:parabolic subgroup}
  We say that a non-trivial subgroup \( P\le\Gamma \) is parabolic if it is of the form \( \Gamma\cap g\mathrm{U}g^{-1} \) for some \( g\in \SL_2(\RR) \), where
  \( \mathrm{U}\le\SL_2(\ZZ) \)
is the subgroup of upper-triangular matrices with \( 1 \) in all diagonal entries.
We say that an element \( \gamma\in\Gamma \) is parabolic if \( \gamma\ne 1 \) and it is of the form \( gug^{-1} \) for some \( g\in \SL_2(\RR) \) and \( u\in \mathrm{U} \).
\end{definition}

Given a ring \( R \) and an \( R[\Gamma] \)-module \( V \), we define the parabolic subspace \( \H^1_{\mathrm{par}}(\Gamma;V) \) as the subspace of \( \H^1(\Gamma;V) \) consisting of classes which vanish upon restriction to every parabolic subgroup.
The quotient by the parabolic subspace can be described as follows when \( V \) is a symmetric power of the standard representation.

Consider the standard action of \( \Gamma \) on the projective line \( \mathrm{P}^1(\QQ) \).
We denote by \( \Gamma_p \) the stabiliser of a point \( p \) of \( \mathrm{P}^1(\QQ) \), and by \( \Gamma_p^+ \) the subgroup consisting of matrices with positive trace.
The parabolic subgroups of \( \Gamma \) are exactly the subgroups of \( \Gamma_p^+ \) as \( p \) varies in \( \mathrm{P}^1(\QQ) \).
For example, \( \SL_2(\ZZ)_{[1:0]}=\mathrm{U}\times\{\pm 1\} \).
We have the following well-known result (see \cite[\sec 4]{Wiese2019modularForms}).

\begin{theorem}\label{parabolic ses}
  Restriction to the stabilisers of the action of \( \Gamma \) on \( \mathrm{P}^1(\QQ) \) gives a short exact sequence
  \[
    0
    \ra
    \H^{1}_{\mathrm{par}}(\Gamma;\QQ[\ex,\ey]_{2m})
    \ra
    \H^1(\Gamma;\QQ[\ex,\ey]_{2m})
    \ra
    \bigoplus_{\Gamma\cdot p\in\Gamma\backslash \mathrm{P}^1(\QQ)}\H^1(\Gamma_{p};\QQ[\ex,\ey]_{2m})
    \ra
    0
  \]
  of rational vector spaces where each summand of the quotient is one-dimensional.
\end{theorem}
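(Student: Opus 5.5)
The plan is to derive the sequence from the long exact sequence in group cohomology of the pair formed by \( \Gamma \) and its cusp stabilisers; equivalently, from the Borel--Serre compactification of the modular curve \( \Gamma\backslash\mathfrak{H} \). Since \( \Gamma \) has finite index in \( \SL_2(\ZZ) \), some finite-index subgroup is torsion-free. Restriction followed by the transfer shows that rational cohomology of \( \Gamma \) is the invariants of that of a torsion-free finite-index normal subgroup \( \Gamma' \). It therefore suffices to prove the statement for \( \Gamma' \) equivariantly, and then take \( \Gamma/\Gamma' \)-invariants, which is exact over \( \QQ \). Exactness on the left is the definition of \( \H^1_{\mathrm{par}} \), once one notes two things: each parabolic subgroup is contained in some \( \Gamma_p \), and restriction to \( \Gamma_p \) factors through \( \Gamma_p^+ \) up to an isomorphism on rational cohomology. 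The latter holds because \( \Gamma_p^+ \) has index at most \( 2 \) in \( \Gamma_p \), and \( -1 \) acts trivially on the even-degree module \( \QQ[\ex,\ey]_{2m} \). Conjugation identifies the restriction maps to stabilisers of points in the same orbit, so we get one summand for each orbit.

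Next I compute the summands. Each \( \Gamma_p \) is conjugate in \( \SL_2(\QQ) \) to a subgroup of \( \mathrm{U}\times\{\pm1\} \) of the form \( \langle \begin{psmallmatrix}1&h\\0&1\end{psmallmatrix}\rangle\times\{\pm1\} \) or just the first factor. Its rational cohomology with coefficients in \( V=\QQ[\ex,\ey]_{2m} \) is that of an infinite cyclic group \( \langle u\rangle \). Hence \( \H^1(\Gamma_p;V)\cong V_{\langle u\rangle} \), the coinvariants of \( u \). For \( u=\begin{psmallmatrix}1&h\\0&1\end{psmallmatrix} \), the action is \( f(\ex,\ey)\mapsto f(\ex-h\ey,\ey) \), which is unipotent on \( V \) with a single Jordan block. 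So \( u-1 \) has rank \( 2m \) on the \( (2m+1) \)-dimensional space, and the coinvariants are one-dimensional.

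The main step is surjectivity onto the direct sum. Let \( X \) be the Borel--Serre compactification of \( \Gamma'\backslash\mathfrak{H} \), a compact surface with boundary circles \( C_p \), one for each cusp. The local system \( \mathcal{V} \) determined by \( V \) gives the long exact sequence of the pair:
\[
  \H^1(X;\mathcal V)\ra \bigoplus_p \H^1(C_p;\mathcal V)\ra \H^2(X,\partial X;\mathcal V)\ra \H^2(X;\mathcal V).
\]
The last group vanishes because \( X \) is homotopy equivalent to a graph. By Poincaré--Lefschetz duality, \( \H^2(X,\partial X;\mathcal V)\cong \H_0(X;\mathcal V)\cong V_{\Gamma'} \). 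This coinvariant space is zero for \( m\ge1 \): it is dual to the invariants of the dual module, and \( \Gamma' \) is Zariski dense in \( \SL_2 \), while \( \operatorname{Sym}^{2m} \) is irreducible and nontrivial. Hence the restriction map to the boundary is surjective. The dimension count from the previous paragraph then finishes the argument, after taking \( \Gamma/\Gamma' \)-invariants to pass back to \( \Gamma \).

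The obstacle I expect is bookkeeping: matching the cusps of \( \Gamma' \) with the orbits of \( \Gamma \) on \( \mathrm{P}^1(\QQ) \), and checking that taking invariants of the boundary term yields exactly one copy of \( \H^1(\Gamma_p;V) \) per \( \Gamma \)-orbit. This is Shapiro's lemma applied to the induced module \( \bigoplus_{\Gamma'\text{-cusps above }p} \). Alternatively, one can simply cite \cite[\sec 4]{Wiese2019modularForms}, as the paper does.
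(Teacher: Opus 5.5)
Your proposal is correct, and it takes a genuinely different route: the paper gives no argument for this statement. It calls the sequence well known and cites \cite[\S 4]{Wiese2019modularForms}. You supply a self-contained topological proof in four steps:
\begin{itemize}
\item transfer reduces to a torsion-free normal subgroup \( \Gamma'\le\Gamma \) of finite index;
\item the long exact sequence of the Borel--Serre compactification \( (X,\partial X) \) of \( \Gamma'\backslash\mathfrak{H} \), combined with Poincar\'e--Lefschetz duality, identifies the obstruction to surjectivity with \( \H^2(X,\partial X;\mathcal V)\cong V_{\Gamma'} \);
\item this coinvariant space vanishes for \( m\ge1 \) by irreducibility of \( \operatorname{Sym}^{2m} \);
\item one-dimensionality of each summand comes from the single unipotent Jordan block of a cusp generator.
\end{itemize}

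Two small remarks. First, for surjectivity alone you do not need \( \H^2(X;\mathcal V)=0 \); the vanishing of \( \H^2(X,\partial X;\mathcal V) \) already suffices. Second, to identify the kernel with \( \H^1_{\mathrm{par}} \) you also need the converse of your first observation: each \( \Gamma_p^+ \) is itself parabolic. The paper asserts this.

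What your route buys is that it exhibits the cokernel of the restriction map as a coinvariant space. This makes visible a hypothesis the paper's statement leaves implicit, namely \( m\ge1 \). For \( m=0 \), right-exactness fails: \( \H^1(\SL_2(\ZZ);\QQ)=0 \), while the unique cusp contributes a copy of \( \QQ \). Your argument also applies unchanged to any finite-index subgroup of \( \SL_2(\ZZ) \) containing \( -1 \).

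The price is heavier machinery (Borel--Serre, duality with twisted coefficients) and the orbit bookkeeping when passing from \( \Gamma' \) back to \( \Gamma \). That bookkeeping works as you indicate. Taking \( \Gamma/\Gamma' \)-invariants of the boundary term over the \( \Gamma' \)-cusps lying in a single \( \Gamma \)-orbit yields \( \H^1(\Gamma'_p;V)^{\Gamma_p/\Gamma'_p}\cong\H^1(\Gamma_p;V) \), with \( \Gamma'_p=\Gamma'\cap\Gamma_p \). Hence there is exactly one summand per \( \Gamma \)-orbit.
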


The virtual cohomological dimension of a group is the common cohomological dimension of its torsion-free subgroups of finite index.
The following result is also well-known.

\begin{theorem}\label{vcd of Gamma}
  The virtual cohomological dimension of \( \Gamma \) is \( 1 \).
  In particular, \( \H^{*}(\Gamma;V)=0 \) for all \( *>1 \) and all \( \QQ[\Gamma] \)-modules \( V \).
\end{theorem}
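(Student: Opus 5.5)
The statement to prove is \cref{vcd of Gamma}: $\Gamma$ has virtual cohomological dimension $1$, so its rational cohomology vanishes above degree $1$. The plan is to exhibit an explicit finite-index free subgroup of $\Gamma$. Rational cohomology then follows by a transfer argument.

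\textbf{Step 1: a free subgroup of finite index in \( \SL_2(\ZZ) \).} Let \( \Gamma(N) \) be the kernel of reduction \( \SL_2(\ZZ)\ra\SL_2(\ZZ/N) \); it has finite index. I would show that \( \Gamma(N) \) is free for \( N\ge 3 \). There are two standard routes.
\begin{enumerate}
  \item[(a)] Use the amalgam decomposition \( \SL_2(\ZZ)\cong \ZZ/4*_{\ZZ/2}\ZZ/6 \). By Serre's theory of groups acting on trees, \( \SL_2(\ZZ) \) acts on a tree with finite vertex stabilisers. Any subgroup meeting every conjugate of a finite subgroup trivially therefore acts freely on the tree, and so is free.
  \item[(b)] Check directly that \( \Gamma(N) \) is torsion-free for \( N\ge3 \). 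A torsion element of \( \SL_2(\ZZ) \) has order dividing \( 4 \) or \( 6 \), and it has characteristic polynomial \( t^2+1 \), \( t^2\pm t+1 \) or \( (t\pm1)^2 \). Such an element cannot be congruent to the identity mod \( N\ge3 \): reducing the trace gives \( 2\equiv 0,\pm1 \pmod N \), which is impossible, and \( -1\not\equiv 1 \). A torsion-free subgroup of an amalgam of finite groups acts freely on the Bass--Serre tree, hence is free.
\end{enumerate}

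\textbf{Step 2: conclude for \( \Gamma \).} Since \( \Gamma \) has finite index in \( \SL_2(\ZZ) \) (the theta subgroup has index \( 3 \)), the intersection \( \Gamma\cap\Gamma(3) \) is a free subgroup of finite index in \( \Gamma \). It is non-trivial and free, so its cohomological dimension is \( 1 \). Every torsion-free finite-index subgroup of \( \Gamma \) is a finite-index subgroup of \( \SL_2(\ZZ) \) that acts freely on the tree, so it is free and non-trivial, hence of dimension exactly \( 1 \). This is consistent with Serre's theorem that the common dimension is well defined.

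\textbf{Step 3: rational vanishing.} Let \( F\le\Gamma \) be free of finite index and let \( V \) be a \( \QQ[\Gamma] \)-module. The composite \( \operatorname{cores}\circ\operatorname{res} \) on \( \H^*(\Gamma;V) \) is multiplication by \( [\Gamma:F] \) (\cite[III.9]{Brown1994groupCohomology}), which is invertible over \( \QQ \). Hence restriction \( \H^*(\Gamma;V)\ra\H^*(F;V) \) is injective. Since \( F \) is free, \( \H^*(F;V)=0 \) for \( *>1 \), using the length-one free resolution of \( \ZZ \) over \( \ZZ[F] \). It follows that \( \H^*(\Gamma;V)=0 \) for \( *>1 \).

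The only real input is the freeness in Step 1. I would simply cite Serre's \emph{Trees} for the amalgam description and the freeness of torsion-free subgroups; the remaining steps are formal.
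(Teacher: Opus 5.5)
Your strategy is the paper's: find a free subgroup of finite index in \(\SL_2(\ZZ)\), intersect it with \(\Gamma\), and pass to \(\QQ\)-coefficients using \(\operatorname{cores}\circ\operatorname{res}=[\Gamma:F]\). The paper cites Brown for this last step. The only difference is the choice of free subgroup. The paper takes the commutator subgroup of \(\SL_2(\ZZ)\), which is free of rank \(2\) and index \(12\). You take a principal congruence subgroup and prove freeness with Bass--Serre theory.

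There is, however, an arithmetic slip in Step 1(b), at exactly the level you use in Step 2. An element of order \(3\) has trace \(-1\), and \(2\equiv -1 \pmod 3\). So your claim that \(2\equiv 0,\pm1 \pmod N\) is impossible for every \(N\ge 3\) fails at \(N=3\). As written, the trace argument does not exclude elements of order \(3\) from \(\Gamma(3)\), and both of your routes (a) and (b) rest on that torsion-freeness.

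The conclusion is still true. If \(g=I+3A\) had order \(3\), Cayley--Hamilton would give \(g^2+g+I=0\), that is \(3I+9(A+A^2)=0\). This forces \(I\equiv 0 \pmod 3\), a contradiction. Alternatively, you can work with \(\Gamma(4)\). There your trace argument goes through, because the only coincidence \(-2\equiv 2 \pmod 4\) comes from \(-I\), which you already treat separately. You could also simply use the commutator subgroup, as the paper does.

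With this repair, the rest of your argument is correct: freeness from the free action on the tree, non-triviality giving dimension exactly \(1\), and the transfer argument in Step 3.
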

\begin{proof}
  The virtual cohomological dimension of \( \SL_2(\ZZ) \) is 1 because it contains a free subgroup of rank 2 with index 12; namely, the commutator subgroup (see \cite[p. 248]{Brown1994groupCohomology}).
  Since \( \Gamma \) has finite index in \( \SL_2(\ZZ) \), the first statement follows.
  The second statement is implied by using~\cite[Proposition 10.1]{Brown1994groupCohomology}
\end{proof}

\section{Characteristic classes of fibrations with fibre a product of spheres}\label{sec:euler-classes-SnSn-fibrations}

Let \( \xSn \) denote the \( d \)-fold product of \( \S^n \).

In this section, we define \textit{Euler classes} associated to an oriented \( \xSn \)-fibration equipped with a trivialisation of the local system given by the homology of the fibres.
Then we provide the description given by Berglund--Zeman~\cite{BerglundZeman2024algebraicmodels} of the ring of rational characteristic classes of oriented \( \xSn \)-fibrations in terms of these Euler classes, and an action of an arithmetic group.

We start by recording the group of automorphisms of the \( n \)th homology of \( \xSn \) realised by self homotopy equivalences and diffeomorphisms.
We choose a basepoint of \( \xSn \).
We use it to map each factor into the product, and with the obtained basis of \( \H_n(\xSn;\ZZ) \) we identify the latter with \( \ZZ^{\x} \).

\begin{lemma}\label{realisation of automorphisms in Hn}
  The subgroup \( \Gamma\le\SL_{\x}(\ZZ) \) of matrices realised by orientation-preserving self homotopy equivalences of \( \xSn \) is \( \SL_{\x}(\ZZ) \) if \( n\in\{1,3,7\} \), and it consists of those matrices with exactly one odd entry in each row otherwise.
  The group realised by diffeomorphisms is the same one.
\end{lemma}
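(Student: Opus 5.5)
The plan is to compare the action on \( \H_n \) with the cup product structure, and then reduce realisability by homotopy equivalences to a Whitehead product obstruction. We work with \( \x=2 \), the case used in the paper. Since \( n \) is odd, the cup product
\[
  \H^n(\SnSn;\ZZ)\otimes\H^n(\SnSn;\ZZ)\lra\H^{2n}(\SnSn;\ZZ)\cong\ZZ
\]
is a unimodular skew-symmetric form. An orientation-preserving self homotopy equivalence fixes \( [\SnSn] \), so it preserves this form. Its matrix therefore lies in \( \mathrm{Sp}_2(\ZZ)=\SL_2(\ZZ) \), and we get \( \Gamma\le\SL_2(\ZZ) \). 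The remaining work is to determine which matrices are realised.

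For realisability we use the cell structure \( \SnSn=(\S^n\vee\S^n)\cup_{w}e^{2n} \), where the top cell is attached along the Whitehead product \( w=[\iota_1,\iota_2] \). Up to homotopy, a map \( \S^n\vee\S^n\to\SnSn \) is determined by degrees \( a,b,c,d \) of its components. Concretely, the first wedge summand maps to \( (a\iota,c\iota) \) and the second to \( (b\iota,d\iota) \), giving the matrix \( \begin{psmallmatrix}a&b\\c&d\end{psmallmatrix} \) in the chosen basis. Such a map extends over the top cell if and only if the image of \( w \) vanishes in
\[
  \pi_{2n-1}(\SnSn)=\pi_{2n-1}(\S^n)\oplus\pi_{2n-1}(\S^n).
\]
By bilinearity and naturality of Whitehead products, this image is \( (ab[\iota,\iota],\,cd[\iota,\iota]) \). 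For \( n \) odd, \( 2[\iota,\iota]=0 \). Moreover, \( [\iota,\iota]=0 \) exactly when \( \S^n \) is an H-space, that is, when \( n\in\{1,3,7\} \); this is the Adams Hopf invariant one theorem. Hence for \( n\notin\{1,3,7\} \) the matrix is realised if and only if \( ab \) and \( cd \) are both even. Combined with \( ad-bc=1 \), this says that each row has exactly one odd entry. For \( n\in\{1,3,7\} \) every matrix in \( \SL_2(\ZZ) \) is realised. Any extension of a map inducing an isomorphism on homology is a homotopy equivalence by Whitehead's theorem. Its degree is \( \det=1 \) by the cup product computation above, so it is orientation-preserving.

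For diffeomorphisms, every diffeomorphism is a homotopy equivalence, so it suffices to realise a generating set of \( \Gamma \) by orientation-preserving diffeomorphisms. We use the following generators.
\begin{itemize}
  \item The matrix \( \begin{psmallmatrix}0&-1\\1&0\end{psmallmatrix} \) is realised by \( (x,y)\mapsto(r(y),x) \), where \( r \) is a reflection of \( \S^n \). The plain swap has degree \( (-1)^{n^2}=-1 \), so composing with \( r \) gives an orientation-preserving diffeomorphism.
  \item Shears \( \begin{psmallmatrix}1&k\\0&1\end{psmallmatrix} \) are realised by \( (x,y)\mapsto(\mu(y)x,y) \), where \( \mu\colon\S^n\to\SO(n+1) \) is smooth and the orbit map \( y\mapsto\mu(y)e \) has degree \( k \).
\end{itemize}
For general odd \( n \), take \( \mu(y)=r_y r_e \), a product of two reflections, whose orbit map has degree \( 2 \). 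This realises \( T^2 \), and \( T^2 \) together with \( S \) generates the theta group. For \( n\in\{1,3,7\} \), use left multiplication in \( \CC \), \( \mathbb{H} \) or the octonions, which is smooth and realises \( T \). Together with \( S \), this generates \( \SL_2(\ZZ) \).

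The step I expect to be the main obstacle is the degree computations. One must check that the orbit map of \( r_yr_e \) has degree exactly \( 2 \) when \( n \) is odd, and not \( 0 \) as it would be for \( n \) even. One must also pin down the signs, so that the matrices obtained are the intended ones with the conventions of the basis chosen above.
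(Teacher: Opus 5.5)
For \( d=2 \) your argument is correct. The cup-product form forces \( \det=1 \), and the obstruction \( (ab[\iota,\iota],cd[\iota,\iota]) \) together with Adams' theorem gives exactly the parity condition. The shears together with \( \begin{psmallmatrix}0&-1\\1&0\end{psmallmatrix} \) realise generators of \( \Gamma \) by diffeomorphisms. This is more self-contained than the paper's proof, which is purely by citation: Wall for \( d=2 \), Lucas for diffeomorphisms, and Basu--Kasilingam and Berglund--Zeman for homotopy equivalences.

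The degree check you single out as the main obstacle is harmless. Set \( f(y)=r_yr_e(e)=2\langle e,y\rangle y-e \). Then \( f(-y)=f(y) \), and the antipodal map preserves orientation for odd \( n \). Also \( f(e+h)=e+2h+O(|h|^2) \) for \( h\perp e \). So both preimages of the regular value \( e \) contribute \( +1 \), and \( \deg f=2 \). Note that \( f=-\theta(-,e) \), where \( \theta \) is the degree-two map the paper uses in \cref{sec:E2k}.

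The gap is that the lemma is stated for the \( d \)-fold product with arbitrary \( d \), and \( d=2 \) is not the only case the paper needs. \cref{sec:injection of characteristic classes}, in particular \cref{transvections and signed permutations generate} and \cref{Torelli equivariant splitting}, uses the description of \( \Gamma\le\SL_d(\ZZ) \) for all \( d \).

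The necessity half of your argument extends directly:
\begin{itemize}
\item The top class \( x_1\cdots x_d \) of the exterior cohomology ring gives \( \deg f=\det \).
\item Restrict \( f \) to the face \( \S^n\times\S^n \) in coordinates \( i\neq j \) and project to the \( k \)th factor. This forces \( a_{ki}a_{kj}[\iota,\iota]=0 \), so each row has at most, hence exactly, one odd entry.
\end{itemize}

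The realisation half does not extend as written. Your extension argument uses that \( \S^n\times\S^n \) has a single cell above dimension \( n \). In contrast, \( (\S^n)^{d} \) has cells of dimensions \( 3n,\dots,dn \) attached via higher-order Whitehead products, and you have no control over those obstructions.

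What is missing for \( d\ge3 \) is a generating set for \( \Gamma \). One can take the elementary matrices \( T_{ij} \) (respectively \( T_{ij}^2 \)) together with the determinant-one signed permutation matrices, as in \cref{transvections and signed permutations generate}. The substantive input there is that the level-\( 2 \) congruence subgroup of \( \SL_d(\ZZ) \) is generated by the \( T_{ij}^2 \) and the diagonal sign matrices.

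Given that, every generator is realised by an orientation-preserving diffeomorphism:
\begin{itemize}
\item your shear acting on factors \( i,j \), with the identity on the others;
\item permutations of factors composed with reflections.
\end{itemize}
This settles both realisation claims at once.

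For homotopy equivalences alone you could avoid the generation result. Take a map \( \S^n\times\S^n\to\S^n \) of bidegree \( (1,2) \), which exists since \( 2[\iota,\iota]=0 \). Iterating it and precomposing with suitable self-maps of the factors gives row maps \( (\S^n)^{d}\to\S^n \) of any multidegree with one odd entry. Their product has the prescribed matrix.
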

\begin{proof}
  The case \( \x=2 \) was covered in~\cite[Lemma 5]{Wall1962killingOdd}.
  The remaining cases of the group realised by diffeomorphisms were computed in~\cite{lucasDiffeomorphismsProductSpheres2002a}.
  The group realised by orientation-preserving homotopy equivalences was computed in~\cite{basuRealizingCongruenceSubgroups2016} and~\cite{BerglundZeman2024algebraicmodels}.
\end{proof}

Let \( p \) be an oriented \( \xSn \)-fibration over a path-connected space \( B \), together with a trivialisation \( \mathcal{H}_n(p)\cong\ZZ^{d}[n] \) of the local system on \( B \) given by the homology of the fibres, where \( \ZZ^d[n] \) is the standard representation of \( \Gamma \) put in degree \( n \).
Using this identification, we rewrite the transgression in the \( (n+1) \)st page of the Serre spectral sequence of \( p \) with rational coefficients as
\[
  d_{n+1}(p)\cl \QQ^{\x}[n]^{\vee}
  \lra
  \H^{n+1}(B;\QQ),
\]
where \( \vee \) denotes the degreewise dual.

\begin{definition}\label{def:Euler classes}
  The Euler classes \( e_1,\dotsc,e_{\x} \) of \( p \) are the images of the canonical basis for \(  \QQ^{\x}[n]^{\vee} \) under the differential \( d_{n+1}(p) \).
\end{definition}

Note that a \( \x \)-fold product of oriented \( \S^n \)-fibrations is an oriented \( \xSn \)-fibration.
The Euler classes of the product agree with the standard Euler classes of the oriented \( \S^n\)-fibrations, after pullback along the projections.

The classifying space for oriented \( \xSn \)-fibrations together with a trivialisation of the monodromy local system is the homotopy Torelli space \( \B\htor(\xSn) \), as defined in \cref{sec:class-fibr-1}.

Let \( \EulerRing \) be the polynomial algebra spanned by the universal Euler classes, where \( \xset=\{1,\dotsc,\x\} \).
Let \( \Gamma \) act on it as the symmetric powers of the dual of the standard representation.
Note that if \( \x \) is \( 2 \), we write a subscript \( x \) or \( y \) instead of 1 or 2.
\begin{proposition}\label{homotopy Torelli computation}
  The map
  \begin{equation}
    \label{eq:Euler Ring to htor}
    \EulerRing
    \lra
    \H^{*}(\B\htor(\xSn);\QQ)
  \end{equation}
  is a \( \Gamma \)-equivariant isomorphism.
\end{proposition}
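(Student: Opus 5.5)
The plan is to compute \( \H^{*}(\B\htor(\xSn);\QQ) \) using rational homotopy theory. The first reduction is to split \( \B\htor(\xSn) \) through the identity component. Let \( \aut_{1}(\xSn) \) be the identity component of \( \aut(\xSn) \). There is a fibration
\[
  \B\aut_{1}(\xSn)\lra\B\htor(\xSn)\lra\B\pi_{0}\htor(\xSn),
\]
where \( \pi_{0}\htor(\xSn) \) is the group of homotopy classes of self equivalences acting trivially on \( \H_n \).

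I will argue that this group is finite for \( n\geq3 \) odd. A self-map of \( \xSn \) is determined up to homotopy by its restrictions to the wedge (read off from \( \H_n \)) together with obstruction data in homotopy groups \( \pi_{kn}(\S^n) \) with \( k\ge2 \). These groups are finite because \( n \) is odd. For \( n=1 \) the group is trivial, since \( \aut(T^d)\simeq T^d\rtimes\GL_d(\ZZ) \).

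Given finiteness, the rational Serre spectral sequence collapses onto the invariants \( \H^{*}(\B\aut_{1}(\xSn);\QQ)^{\pi_{0}\htor} \). I then need that this finite group acts trivially on that cohomology, which I will deduce once the cohomology is computed.

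Next I compute \( \B\aut_{1}(\xSn) \) rationally via Sullivan models. The minimal Sullivan model of \( \xSn \) for \( n \) odd is \( \Lambda(x_1,\dotsc,x_d) \) with \( |x_i|=n \) and zero differential. By the standard description of \( \B\aut_{1} \), its Quillen model is the dg Lie algebra of positive-degree derivations of this model, with the homological grading lowering degree. A derivation is determined by the images of the \( x_i \), which are polynomials in odd generators of degree a multiple of \( n \). Hence the only degree-lowering derivations are the \( \partial/\partial x_i \), each of degree \( n \). These span an abelian Lie algebra \( \QQ^{d}[n] \) with zero differential. Therefore
\[
  \B\aut_{1}(\xSn)\simeq_{\QQ}K(\QQ^{d},n+1),
\]
whose rational cohomology is the polynomial algebra on \( d \) generators of even degree \( n+1 \). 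The action of \( \pi_{0}\htor \) on \( \pi_{n+1}\otimes\QQ\cong\H_n(\xSn;\QQ) \) is through homology, hence trivial. So
\[
  \H^{*}(\B\htor(\xSn);\QQ)\cong\QQ[\text{generators in degree } n+1].
\]

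It remains to identify these generators with the Euler classes and to check equivariance. The universal fibration over \( K(\QQ^{d},n+1) \) is modelled by the relative Sullivan algebra \( \Lambda(e_1,\dotsc,e_d)\otimes\Lambda(x_1,\dotsc,x_d) \) with \( dx_i=e_i \). This is forced: the derivation \( \partial/\partial x_i \) dualises to \( e_i \). In the Serre spectral sequence of the universal fibration, the transgression \( d_{n+1} \) therefore sends \( x_i \) to the generator \( e_i \). This shows that the Euler classes of \cref{def:Euler classes} are exactly the polynomial generators, so \eqref{eq:Euler Ring to htor} is an isomorphism.

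For \( \Gamma \)-equivariance, \( \Gamma \) acts on \( \B\htor(\xSn) \) by changing the trivialisation of \( \mathcal{H}_n \). Naturality of the transgression then shows that \( d_{n+1} \) intertwines the dual standard action on \( \QQ^{d}[n]^{\vee} \) with this action. Equivariance therefore holds on generators, and hence on the whole polynomial algebra, since the map is multiplicative.

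The step I expect to be the main obstacle is the first one: the finiteness of \( \pi_{0}\htor(\xSn) \) and the rational justification of \( \B\aut_{1} \) modelled by derivations. The derivation computation itself is routine. If finiteness is hard to establish directly, I would instead invoke the Berglund--Zeman model of \( \B\aut \) for formal spaces, which already packages \( \B\htor \) rationally.
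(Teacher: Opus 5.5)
Your proof is correct, but it takes a different route from the paper. For the isomorphism the paper gives no argument of its own: it cites Berglund--Zeman, whose general dg Lie models for $\B\aut^{K}(X)$ are also used later in the paper for $K=P$. In that framework, taking $K$ trivial leaves only the abelian Lie algebra spanned by the $\partial/\partial x_i$ in degree $n$. For $\Gamma$-equivariance the paper gives your reason: the Serre spectral sequence defining the Euler classes is one of $\Gamma$-modules.

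You handle the component group by hand. Obstruction theory on the skeleta of $\times_d\S^n$, using that $\pi_{kn}(\S^n)$ is finite for $k\ge 2$, shows that $\pi_0\htor(\times_d\S^n)$ is finite. Hence $\H^*(\B\htor;\QQ)$ is the invariants of $\H^*(\B\aut_1;\QQ)$. After that, the classical derivation model of $\B\aut_1$ and the Chevalley--Eilenberg model of its universal fibration suffice. This makes the proof self-contained and elementary. The price is that it uses oddness of $n$ twice, needs $n=1$ treated separately, and gives nothing for infinite $K$ such as $P$, which the Berglund--Zeman framework treats uniformly.

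Two points should be stated explicitly. First, the Euler classes over $\B\htor$ pull back to those over $\B\aut_1$ by naturality of the transgression; this is what carries your computation on $\B\aut_1$ through the isomorphism onto invariants. Second, triviality of the $\pi_0\htor$-action follows most directly from the same naturality. The transgression $\H^n(\times_d\S^n;\QQ)\to\H^{n+1}(\B\aut_1;\QQ)$ hits the polynomial generators, and $\pi_0\htor$ acts trivially on $\H^n$ by definition, so it fixes the generators. This is simpler than passing through rational homotopy groups.
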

\begin{proof}
  A proof is given in~\cite[Proposition 5.13]{BerglundZeman2024algebraicmodels}; cf. \cref{standard rep and its dual}.
  Equivariance can also be seen to hold because the spectral sequence used to define the Euler classes is one of \( \Gamma \)-modules.
\end{proof}

\begin{theorem}[{\cite[Theorem 5.15]{BerglundZeman2024algebraicmodels}}]\label{SnSn characteristic classes}
  The Serre spectral sequence with rational coefficients of \( \B\aut^+(\xSn)\ra\B\Gamma \) collapses at the second page, and it induces an isomorphism of graded \( \QQ \)-algebras
  \begin{equation}\label{eq:SnSn characteristic classes}
    \H^{*}(\B\aut^+(\xSn);\QQ)\cong \H^{*}(\Gamma;\EulerRing)
  \end{equation}
  where \( \Gamma \) acts as the symmetric powers of the dual of the standard representation.
\end{theorem}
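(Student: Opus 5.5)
The Serre spectral sequence of \( \B\aut^+(\xSn)\ra\B\Gamma \) has fibre the homotopy Torelli space \( \B\htor(\xSn) \), since \( \B\aut^{+}(\xSn) \) is the homotopy pullback over \( \B\Gamma \). So the plan is to compute its \( E_2 \)-page from \cref{homotopy Torelli computation} and then show it has no room for differentials. The fibre of \( \B\aut^+(\xSn)\ra\B\Gamma \) is identified with \( \B\htor(\xSn) \), and the action of \( \Gamma=\pi_1(\B\Gamma) \) on its rational cohomology is the monodromy action. By \cref{homotopy Torelli computation} this cohomology is \( \Gamma \)-equivariantly \( \EulerRing \) with the dual symmetric power action, so
\[
  E_2^{p,q}=\H^{p}(\Gamma;\EulerRing^{q}) \Longrightarrow \H^{p+q}(\B\aut^+(\xSn);\QQ).
\]
Here \( \EulerRing^{q} \) is the degree \( q \) part, which is nonzero only for \( q \) divisible by \( n+1 \), because the Euler classes have degree \( n+1 \). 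I would use \cref{conjugation=monodromy} to make the identification of the local system with the monodromy action precise.

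For collapse, I use that \( \Gamma \) has rational cohomological dimension \( 1 \) by \cref{vcd of Gamma} in the case \( d=2 \). Then \( E_2^{p,q}=0 \) for \( p>1 \), so the spectral sequence is concentrated in the two columns \( p=0,1 \). Every differential \( d_r \) with \( r\ge2 \) changes \( p \) by \( r \) and therefore vanishes, and \( E_2=E_\infty \). For general \( d \) I would instead use the Berglund--Zeman argument: with rational coefficients, \( \B\aut^+ \) is modelled as a homotopy quotient of a \( \Gamma \)-equivariant model whose Chevalley--Eilenberg data are algebraic \( \Gamma \)-representations. This gives an equivariant formality statement and splits the spectral sequence.

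To get the isomorphism as graded algebras rather than only additively, note that the filtration has length at most two. The extension problem is therefore trivial on vector spaces, and the two-column case is degreewise just \( \H^0 \oplus \H^1 \). For multiplicativity I would realise the isomorphism through a map of cdga's, namely \( \CE(\ggg;\EulerRing) \) computing group cohomology, in the sense of \cref{B and homotopy orbits}. The main obstacle is exactly this point: producing a multiplicative, equivariant model of the map, so that no multiplicative extension arises in higher \( d \). For \( d=2 \) it is automatic, since the product on \( \H^1\cdot\H^1 \) lands in \( \H^2=0 \) on both sides.
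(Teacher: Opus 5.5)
Your proposal is essentially the paper's treatment. For general $d$ the paper simply quotes Berglund--Zeman; your extra idea of getting multiplicativity from $\CE(\mathfrak{g};\EulerRing)$ would not work as stated, since Chevalley--Eilenberg cochains of a Lie algebra do not compute the cohomology of the arithmetic group $\Gamma$, and \cref{B and homotopy orbits} only treats nilpotent actions. For $d=2$, \cref{vcd collapse} gives exactly your argument: vcd one forces collapse, and the $E_2$-page is identified by \cref{homotopy Torelli computation}.

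The one step you omit, which that remark states explicitly, is that $\EulerRing$ has no nonzero $\Gamma$-invariants in positive degree. Hence the column $p=0$ is just $\QQ$ in degree $0$, and every positive-degree class has filtration one. Only with this does $\H^{1}\cdot\H^{1}\subseteq\H^{2}=0$ settle the algebra structure on both sides; otherwise products involving the $p=0$ column could hide multiplicative extensions.
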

\begin{remark}
  \label{vcd collapse}
  If \( \x \) is \( 2 \), then a different way to prove that there is an isomorphism~\eqref{eq:SnSn characteristic classes} is to observe that the virtual cohomological dimension of \( \Gamma \) is \( 1 \) (See \cref{vcd of Gamma}), which implies that the spectral sequence collapses.
  The algebra structures are trivial for degree reasons.
  To see this, observe that there are no invariants, and therefore the spectral sequence is concentrated in the first column.
  The second page of this spectral sequence is identified in \cref{homotopy Torelli computation}.
\end{remark}

\begin{remark}\label{standard rep and its dual}
  \cite[Theorem 5.15]{BerglundZeman2024algebraicmodels} is instead stated in terms of the symmetric powers \( \operatorname{Sym}^{\bullet} \QQ^{\x} \) of the standard representation.
  However, the pairs \( (\Gamma, \EulerRing) \) and \( (\Gamma,\operatorname{Sym}^{\bullet} \QQ^{\x}) \) are isomorphic in the category of pairs, on which group cohomology is a functor.
  More precisely, this isomorphism is given by the following pair \( (\iota,\ell) \).
  The group isomorphism \( \iota\cl\Gamma\cong\Gamma \) sends \( g \) to the transpose of \( g^{-1} \), which is indeed an element of \( \Gamma \) again.
  The map \( \ell \) is determined by sending the Euler classes to the canonical basis of \( \QQ^{\x} \) in the obvious way.
  Therefore, these two pairs are isomorphic, and so are their cohomologies.
\end{remark}

We refer to~\cite{BerglundZeman2024algebraicmodels} for more facts about these rings of characteristic classes, out of which we remark the following.
First, the ring of \( \Gamma \)-invariants of \( \EulerRing \) is trivial, and therefore all these characteristic classes are nilpotent.
Second, the Poincaré series of the rational cohomology of \( \B\aut^+(\SnSn) \) is given by
\[
  1+z^{2\ell+1}\frac{(1+z^{4\ell}-z^{6\ell})+z^{8\ell}}{(1-z^{4\ell})(1-z^{6\ell})}
\]
if \( n\in\{1,3,7\} \), and by
\[
  1+z\frac{(1+z^{2\ell}-z^{4\ell})+z^{6\ell}}{(1-z^{2\ell})(1-z^{4\ell})}
\]
otherwise, where \( \ell=n+1 \).
Finally, the reduced rational cohomology of \( \B\aut^+(\SnSn) \) is concentrated in odd degrees.
It follows that the ring structure on the rational cohomology is trivial, and that this space formal in the sense of rational homotopy theory.

\begin{remark}\label{rmk:vanishing of partial Wg1 to Wg}
  In the introduction, we claim that the map \(\B\aut_{\partial}(W_{1,1})\ra \B\aut^+(W_{1})\) vanishes on rational cohomology if \(2n\ge 6 \); recall that \( W_g=\#_{g}\SnSn\), that \( W_{g,1} \) is obtained by removing an open \( 2n \)-dimensional disc, and that the subscript \( \partial \) denotes that we restrict to automorphisms that fix the boundary pointwise.
  To verify this claim, observe that it is enough to do so on universal covers, because then one can use the corresponding spectral sequence.
  The rational cohomology ring of the universal cover of the codomain is generated in degree \( n+1 \), by \cref{homotopy Torelli computation}.
  Finally, that of the domain is trivial in degree \( n+1 \), which one can see from the description of its rational model in~\cite{BerglundMadsen2020}.

  We would like to add that there is an obvious generalisation of the Euler classes in \cref{def:Euler classes} for \( W_{g} \).
  However, it follows from \cite[Lemma 7.2]{KRW2020torelliCalc} that they vanish for \( g\ne1 \).
\end{remark}

\section{Generalised Dehn twists}\label{sec:gener-dehn-twists}
In this section, we prove results about the rational homotopy type of spaces of generalised Dehn twists.
We use the name generalised Dehn twists following~\cite{KreckSu2025mapping}.

Consider the topological monoid \( \hDehn \) with the pointwise product, together with the right action of the topological monoid \( \aut^+(\S^n) \) given by precomposition.
Similarly, consider the topological group \( \Dehn \) of smooth maps equipped with the smooth Whitney topology and the pointwise product, together with the right action of \( \SO(n+1) \) on it by precomposition.
We define maps of topological monoids
\begin{gather*}
  \hdehn\cl\hDehnH \lra \aut^{P}(\SnSn)
  \label{eq:homotopical dehn twist}
  \\
  \dehn\cl\DehnH \lra \Diff^{P}(\SnSn)
  \label{eq:smooth Dehn twists}
\end{gather*}
as follows.
The binary operation on the semidirect products for the given right actions \( \rho \) is given by
\[
  (f,g)\cdot(f',g')=(\rho(f,g')\cdot f',g\cdot g') .
\]
To a pair \( (f,g) \) we assign the map
\[
  \SnSn
  \lra
  \SnSn
  \qquad(x,y)\longmapsto (f(y)(x),g(y)).
\]
The superscript \( P \) denotes the submonoid consisting of those maps that on \( n \)th homology induce a map with matrix in \( P=\mathrm{U}\cap\Gamma \), where \( \mathrm{U} \) is the subgroup of \( \SL_2(\ZZ) \) generated by
\(
\begin{psmallmatrix}
  1&1\\0&1
\end{psmallmatrix}
\).

\begin{lemma}\label{Dehn has monodromy P}
  The group of matrices realised by \( \hdehn \) and \( \dehn \) is \( P \).
\end{lemma}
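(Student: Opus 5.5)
The plan is to compute directly the action of \( \hdehn(f,g) \) and \( \dehn(f,g) \) on \( \H_n(\SnSn;\ZZ)\cong\ZZ^2 \) in the basis \( a=[\S^n\times *] \), \( b=[*\times\S^n] \) fixed in \cref{sec:euler-classes-SnSn-fibrations}. We show that the resulting matrices are exactly \( \begin{psmallmatrix}1&k\\0&1\end{psmallmatrix} \), where \( k \) ranges over the image of a certain evaluation map on \( \pi_n \). We then identify that image with the subgroup of \( \ZZ \) that cuts \( \mathrm{U} \) down to \( P=\mathrm{U}\cap\Gamma \).

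\emph{Step 1 (the matrix).} Let \( (x_0,y_0) \) be the basepoint and set \( \phi(x,y)=(f(y)(x),g(y)) \). The class \( a \) is represented by \( x\mapsto(x,y_0) \), and its image is \( x\mapsto (f(y_0)(x),g(y_0)) \). Since \( f(y_0) \) is orientation preserving of degree \( 1 \) and the second coordinate is constant, \( \phi_*a=a \). The class \( b \) is represented by \( y\mapsto(x_0,y) \), whose image is \( y\mapsto (f(y)(x_0),g(y)) \). Its two coordinates have degrees \( k:=\deg(\mathrm{ev}_{x_0}\circ f) \) and \( \deg g=1 \), so \( \phi_*b=ka+b \). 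Hence the matrix is \( \begin{psmallmatrix}1&k\\0&1\end{psmallmatrix}\in\mathrm{U} \); if the paper's convention is the transpose, the computation is identical. Conversely, every \( k \) in the image of
\[
  \mathrm{ev}_*\cl\pi_n(\aut^+(\S^n),\id)\ra\pi_n(\S^n)\qquad\text{(respectively }\pi_n(\SO(n+1))\ra\pi_n(\S^n)\text{)}
\]
is realised by taking \( g=\id \) and \( f \) a representative of a preimage. Here we translate \( f \) into the identity component of \( \aut^+(\S^n) \) by pointwise multiplication with a constant, which does not change \( k \).

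\emph{Step 2 (the image of evaluation).} For \( \SO(n+1)\ra\S^n \) with \( n \) odd, the long exact sequence of the fibration \( \SO(n)\ra\SO(n+1)\ra\S^n \) shows that the image is the kernel of the boundary map \( \pi_n(\S^n)\ra\pi_{n-1}(\SO(n)) \). This boundary map sends \( \iota \) to the clutching class of \( T\S^n \). The image is therefore all of \( \ZZ \) exactly when \( \S^n \) is parallelizable, that is when \( n\in\{1,3,7\} \). Otherwise it is \( 2\ZZ \): the class \( 2\partial\iota \) vanishes for \( n \) odd, because \( T\S^n\oplus T\S^n \) is trivial (equivalently, multiplying by a nowhere-zero vector field gives degree \( 2 \) realisations). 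For \( \aut^+(\S^n) \), the image is the Gottlieb group \( G_n(\S^n) \). Its elements \( k\iota \) are characterised by the vanishing of the Whitehead product \( [k\iota,\iota]=k[\iota,\iota] \). Since \( [\iota,\iota] \) has order \( 2 \) for \( n \) odd not in \( \{1,3,7\} \) (Hopf invariant one) and is \( 0 \) for \( n\in\{1,3,7\} \), the image is again \( \ZZ \) or \( 2\ZZ \) respectively. The inclusion \( \SO(n+1)\ra\aut^+(\S^n) \) shows directly that the image for \( \SO(n+1) \) is contained in that for \( \aut^+(\S^n) \), which is consistent with this.

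\emph{Step 3 (comparison with \( P \)).} By \cref{realisation of automorphisms in Hn}, \( \Gamma=\SL_2(\ZZ) \) for \( n\in\{1,3,7\} \), so \( P=\mathrm{U} \). Otherwise \( \Gamma \) is the theta group, and \( \begin{psmallmatrix}1&k\\0&1\end{psmallmatrix} \) has exactly one odd entry in each row if and only if \( k \) is even, so \( P=\{\begin{psmallmatrix}1&k\\0&1\end{psmallmatrix}:k\in2\ZZ\} \). Combined with Steps 1 and 2, this proves the lemma.

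The main obstacle is Step 2, and specifically making the identification of the evaluation image with \( \ZZ \) or \( 2\ZZ \) precise and citable in both the smooth and homotopical settings. For this I would rely on the classical results on parallelizability of spheres and on the Gottlieb groups of spheres. As a shortcut for the lower bound \( P\subseteq \)image in the homotopical case, one can simply compose with the smooth Dehn twists, since \( \SO(n+1)\subseteq\aut^+(\S^n) \).
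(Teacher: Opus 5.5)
Your argument is correct in outline, but it takes a more self-contained route than the paper. The paper's proof only notes that $\aut^+(\S^n)$ and $\SO(n+1)$ act trivially on homology, and then cites Wall's Lemma~5 for the fact that the realised matrices are exactly $P$. You instead reprove what that citation supplies:

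\begin{itemize}
\item Step~1 shows that the twist $(x,y)\mapsto(f(y)(x),g(y))$ has matrix $\begin{psmallmatrix}1&k\\0&1\end{psmallmatrix}$ with $k=\deg(\mathrm{ev}_{x_0}\circ f)$. This reduces the lemma to computing the image of evaluation on $\pi_n$, namely $\ker(\partial\colon\pi_n\S^n\to\pi_{n-1}\SO(n))$ and the Gottlieb group $G_n(\S^n)$.
\item Step~2 computes these using parallelizability of spheres and Adams' Hopf invariant one theorem.
\end{itemize}

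Compared with the citation, your route shows where the dichotomy $n\in\{1,3,7\}$ comes from. It also shows directly that the smooth and homotopical twists realise the same matrices. Step~3 is then a routine check against the theta group.

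One justification in Step~2 is wrong, although the claim it supports is true. Triviality of $T\S^n\oplus T\S^n$ says nothing about the order of $\partial\iota$ in $\pi_{n-1}\SO(n)$. It only concerns the image of the clutching class in $\pi_{n-1}\SO(2n)$, where $\partial\iota$ already vanishes because $T\S^n$ plus a trivial line bundle is trivial. In fact $T\S^n\oplus T\S^n$ is trivial for every $n$, while $\partial\iota$ has infinite order for $n$ even. So the same reasoning would prove something false. Your remark about vector fields is too vague to replace it.

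Instead, give an explicit degree-two realisation. Write $\rho_v$ for the reflection in $v^{\perp}$. The based map $\S^n\to\SO(n+1)$, $y\mapsto\rho_y\rho_e$, composed with evaluation at $e$, is $y\mapsto 2\langle y,e\rangle y-e$. This map has degree $1+(-1)^{n+1}=2$ for $n$ odd; it is essentially the reflection map $\theta$ of \cref{sec:E2k}, cf.\ Steenrod.

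This puts $2\ZZ$ in the image for $\SO(n+1)$, and hence for $\aut^+(\S^n)$. Combined with $\partial\iota\neq0$ (respectively $[\iota,\iota]\neq0$ and $\SO(n+1)\subseteq\aut^+(\S^n)$), it gives exactly $2\ZZ$ when $n\notin\{1,3,7\}$.

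Two minor points:
\begin{itemize}
\item In the smooth case, choose the representative $f$ smooth, by Whitney approximation.
\item The ``translation into the identity component'' is unnecessary, since $\aut^+(\S^n)$ and $\SO(n+1)$ are path-connected; only a basepoint adjustment is needed.
\end{itemize}
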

\begin{proof}
  Note that \( \aut^+(\S^n) \) and \( \SO(n+1) \) act trivially on homology.
  The fact that \( P \) is the group of matrices realised by \( \dehn \) and \( \hdehn \) is part of the proof of \cref{realisation of automorphisms in Hn} in the case \( d=2 \) given in~\cite[Lemma 5]{Wall1962killingOdd}.
\end{proof}

\begin{remark}\label{bundles and dehn twists}
  Denote by \( H \) either of the topological monoids \( \aut^+(\S^n) \) or \( \SO(n+1) \).
  Let \( \xi\cl\SnSn\ra\S^n \) be the projection to the second sphere.
  The topological monoid \( \map(\S^n,H)\rtimes H \) agrees with that consisting of bundle maps
  \[
    \begin{tikzcd}
      \SnSn
      \ar[r,"\varphi"]
      \ar[d]
      &\SnSn
      \ar[d]
      \\
      \S^n
      \ar[r,"g"]
      &\S^n
    \end{tikzcd}
  \]
  in which \( g \) is given by the action of \( H \), and \( \varphi \) is fibrewise given by the action of \( H \).
  Then
  \[
    \B(\map(\S^n,H)\rtimes H)
  \]
  classifies \( \S^n \)-fibrations \( E\ra B \), together with a fibre bundle \( \zeta \) over \( E \) with fibre \( \S^n \) and structure group \( H \) (an oriented \( \S^n \)-fibration \( \zeta \) if \( H=\aut^+(\S^n) \)), such that the restriction of \( \zeta \) to every fibre is trivial.
  See~\cite{Berglund2022characteristicClassesFamiliesBundles}.
\end{remark}

The remainder of this section is dedicated to providing rational differential graded (dg) Lie models for \( \hdehn \), and for the maps received from the domain of \( \dehn \).
We make use of the definitions in \cref{sec:geom-real-diff} involving dg Lie algebras and commutative dg algebras (cdga).

The rationalised homotopy groups of \( \aut^+(\S^n) \) and \( \SO(n+1) \) are the graded vector spaces
\begin{align*}\label{eq:lie models for monoids}
  &\aaa=\QQ\langle{\epsilon}\rangle\quad\text{and}
  \\
  &\sss=\QQ\langle{q_1,\dotsc,q_{\lfloor\frac{n}{2}\rfloor},\epsilon}\rangle,
\end{align*}
spanned by the following classes.
The class \( q_j \) has degree \( 4j-1 \) and is dual to the Pontryagin class \( p_j \) under the Hurewicz pairing.
Similarly, the class \( \epsilon \) has degree \( n \) and is dual to the Euler class.
We denote by \( \mathfrak{h} \) the rationalised homotopy groups of \( H \).

We consider monoids of automorphisms of two different spheres.
Let the spheres have as Sullivan model the exterior algebras \( \Lambda(x) \) and \( \Lambda(y) \) generated in degree \( n \), respectively.
When necessary, we denote the rationalised homotopy groups \( \mathfrak{h} \) with the corresponding subscripts \( x \) or \( y \).
Consider the action
\begin{equation}\label{eq:action of h}
  \mathfrak{h}\lra \Der\Lambda(y)
\end{equation}
defined by the projection \( \mathfrak{h}\ra\aaa \) followed by the action of \( \epsilon \) as \( \frac{\partial}{\partial y} \).

\begin{proposition}\label{action models}
  The action~\eqref{eq:action of h} models the action of \( H \) on \( \S^n \).
\end{proposition}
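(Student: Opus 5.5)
The plan is to reduce the statement about actions to a statement about the associated Borel fibrations over \( \B H \), and to compare those through relative Sullivan models.
By \cref{spaces/BG = G-spaces}, an action of a group on a space is determined, up to weak equivalence, by its homotopy orbit fibration over the classifying space.
By \cref{B and homotopy orbits}, the action~\eqref{eq:action of h} is a model for the monodromy of \( \langle \CE(\mathfrak{h};\Lambda(y))\rangle \ra \langle \CE(\mathfrak{h})\rangle \).
So it suffices to carry out two steps: first identify \( H \) rationally with \( |\exp_{\bullet}\mathfrak{h}| \), and then check that under this identification the fibration \( (\S^n)_{\h H}\ra \B H \) has relative Sullivan model \( \CE(\mathfrak{h})\ra\CE(\mathfrak{h};\Lambda(y)) \).

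For the first step, I use that \( H \) is connected: \( \SO(n+1) \) is connected, and \( \aut^+(\S^n) \) is the degree-one component.
Moreover \( H \) is rationally an H-space whose rational homotopy is \( \mathfrak{h} \) with vanishing rational Samelson products, since its rational homology is free graded commutative on primitives.
Hence \( \mathfrak{h} \), regarded as an abelian dg Lie algebra with zero differential, is a Lie model, and \( \CE(\mathfrak{h}) \) is the polynomial algebra on the desuspended duals of the generators.
Concretely, \( \CE(\aaa)=\QQ[e] \) with \( |e|=n+1 \), and \( \CE(\sss)=\QQ[p_1,\dotsc,p_{\lfloor n/2\rfloor},e] \).
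This agrees with \( \H^{*}(\B H;\QQ) \).
Because \( \B H \) is simply connected and has free polynomial rational cohomology, it is formal and rationally a product of Eilenberg--MacLane spaces.
It follows that there is a rational equivalence \( \B H\simeq_{\QQ}\B|\exp_{\bullet}\mathfrak{h}| \) realising the identification \( e\leftrightarrow \epsilon^{\vee} \), \( p_j\leftrightarrow q_j^{\vee} \).

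For the second step, I compute the differential on \( \CE(\mathfrak{h};\Lambda(y)) \).
For abelian \( \mathfrak{h} \), the only contribution to \( dy \) comes from the action: \( dy=\sum_{\theta}\theta^{\vee}\cdot\rho(\theta)(y) \).
Since the \( q_j \) act trivially and \( \epsilon \) acts by \( \partial/\partial y \), this gives \( dy=e \).
On the geometric side, \( (\S^n)_{\h H}\ra\B H \) is the universal oriented \( \S^n \)-fibration for \( H=\aut^+(\S^n) \), and the universal oriented \( \S^n \)-sphere bundle for \( H=\SO(n+1) \).
Since \( n \) is odd and the base is simply connected, its relative Sullivan model is \( \H^{*}(\B H)\otimes\Lambda(y) \) with \( dy \) the transgression of the fundamental class, which is the Euler class \( e \).
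The two relative models therefore agree over the identification of the first step, so the fibrations are rationally equivalent over the rationally equivalent bases.

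The main obstacle is to upgrade this to a rational homology equivalence of \emph{triples}, as the definition of ``models'' requires, rather than merely of fibrations over \( \B H \).
The plan is to pass through an intermediate pair built from the zig-zag of \cref{spaces/BG = G-spaces}.
First pull back the fibration of \( \langle\CE(\mathfrak{h};\Lambda(y))\rangle \) along \( \B H\ra \B|\exp_{\bullet}\mathfrak{h}| \).
Then take monodromy, which is an \( H \)-space weakly equivalent to \( \S^n \).
Finally apply naturality of the homotopy fibre functor in the group variable, comparing the \( H \)-action with the \( |\exp_{\bullet}\mathfrak{h}| \)-action via the homomorphism \( H\ra|\exp_{\bullet}\mathfrak{h}| \) obtained by looping, possibly after replacing \( H \) by a weakly equivalent topological group.
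Nilpotence of all actions involved holds because \( \B H \) is simply connected, which is what lets rational equivalences of fibrations detect rational equivalences of the monodromy actions.
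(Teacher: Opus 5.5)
Your proposal is correct and follows essentially the paper's route. The paper likewise uses \cref{spaces/BG = G-spaces} and \cref{B and homotopy orbits} to reduce the statement to matching \( \CE(\mathfrak{h})\ra\CE(\mathfrak{h};\Lambda(y)) \), where \( dy=e \), with the Borel fibration \( \S^n_{\h H}\ra\B H \); it does this by choosing cocycle representatives for the free algebra \( \H^{*}(\B H;\QQ) \) and sending \( y \) to a primitive of the pulled-back Euler class, which is exactly the relative Sullivan model you describe. Your extra paragraph on upgrading to triples spells out a point the paper leaves implicit in citing those two results; note only that the monodromy obtained there is rationally equivalent to \( \S^n \), not weakly equivalent to it.
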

\begin{proof}
  By \cref{spaces/BG = G-spaces} and \cref{B and homotopy orbits}, it is enough to provide quasi-isomorphisms of cdga's \( (\varphi,\phi) \)
  \begin{equation}\label{eq:map of actions model}
    \begin{tikzcd}
      \CE(\mathfrak{h},\Lambda(y))
      \ar[r,"\phi"]
      &\Omega^{*}(\Sing\S^n_{\h H})
      \\
      \CE(\mathfrak{h})
      \ar[r,"\varphi"']
      \ar[u]
      &\Omega^{*}(\Sing\B H)
      \ar[u]
    \end{tikzcd}
  \end{equation}
  where \( \Sing \) is the singular simplicial set functor, and the polynomial differential forms \( \Omega^{*} \) of a simplicial set \( K \) is the cdga \( \Hom_{\mathsf{sSet}}(K,\Omega_{\bullet}) \).
  These are the adjoint functors of \( |-| \) and \( \langle{-}\rangle \), respectively.

  From the proof of~\cite[Proposition 3.7]{Berglund2022characteristicClassesFamiliesBundles}
  we extract a zig-zag of weak homotopy equivalences
  \[
    \langle\CE(\aaa)\rangle
    \sim
    \B|\exp_{\bullet}\aaa|
    \sim
    \B\aut^+(\S^n).
  \]
  The functor \( \Omega^{*}\Sing \) sends weak homotopy equivalences to quasi-isomorphisms.
  We obtain a map \( \varphi\cl\CE(\aaa)\ra\Omega^{*}(\Sing\B H) \) by using the cofibrancy of \( \CE(\aaa) \) to lift maps along quasi-isomorphisms.
  Observe that \( \CE(\mathfrak{h}) \) is free as an algebra, and the given generators make clear how to define an isomorphism to \( \H^{*}(\B H;\QQ) \).
  We extend \( \varphi \) along the map \( \CE(\aaa)\ra \CE(\mathfrak{h}) \) induced by the projection \( \mathfrak{h}\ra\mathfrak{a} \).
  We do this by picking cocycle representatives for the given generators of \( \H^{*}(\B H;\QQ) \) while keeping the choice for the Euler class that defines \( \varphi \).
  To define \( \phi \), observe that \( \CE(\mathfrak{h},\Lambda(y)) \)
  is isomorphic to the free graded-commutative algebra \( \CE(\mathfrak{h})\otimes\Lambda(y)\) with differential \( d(y)=e \).
  The Euler class \( e \) vanishes in \( \H^{*}(\S^n_{\h H};\QQ)=0 \).
  By sending \( y \) to a witness of this fact in \( \Omega^{*}(\Sing\S^n_{\h H}) \) we define the map \( \phi \) which completes the diagram~\eqref{eq:map of actions model}.
\end{proof}

Consider the dg Lie algebra
\[
    (\Lambda(y)\otimes\mathfrak{h}_{x})\langle{0}\rangle \rtimes\mathfrak{h}_{y}
\]
where the action of \( \epsilon_y \) is given by \(\epsilon_y\cdot( a\otimes p)=(\frac{\partial}{\partial y}a)\otimes p \), and \( q_{j,y} \) acts trivially for all \( j \).

\begin{proposition}\label{model for Dehn spaces}
  The map
  \begin{equation}\label{eq:B of map of Dehn spaces}
    \B(\DehnH)
    \lra
    \B(\hDehnH)
  \end{equation}
  admits a rational section.
  Its rational model is given by the map
  \begin{equation}\label{eq:model of map of Dehn spaces}
    (\Lambda(y)\otimes\sx)\langle{0}\rangle \rtimes\sy
    \lra
    (\Lambda(y)\otimes \ax)\langle{0}\rangle \rtimes \ay
  \end{equation}
  induced by the projection \( \sss\ra\aaa \), and the section is induced by the inclusion \( \aaa\ra\sss \).
\end{proposition}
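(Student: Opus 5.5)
We model the mapping spaces and semidirect products factor by factor, and then argue naturality in \( H \). First I record what the dg Lie algebra in \eqref{eq:model of map of Dehn spaces} should represent. For a connected nilpotent dg Lie algebra \( \mathfrak{h} \) modelling a group-like monoid \( H \), the mapping space \( \map(\S^n,H) \), with pointwise product, is modelled by \( (\Lambda(y)\otimes\mathfrak{h})\langle{0}\rangle \). This is the standard Haefliger/Brown--Szczarba model for mapping spaces into a nilpotent space, here a monoid: \( \Lambda(y) \) is a finite dimensional cdga model of \( \S^n \), so \( \Lambda(y)\otimes\mathfrak{h} \) models \( \map(\S^n,\B\exp_\bullet\mathfrak{h}) \) after applying \( \langle 0\rangle \). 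Strictly, we are interested in its loop space. I would quote this in the form of~\cite{Berglund2022characteristicClassesFamiliesBundles}, where exactly such models \( \exp_\bullet((A\otimes\mathfrak{h})\langle 0\rangle)\simeq\map(\langle A\rangle,|\exp_\bullet\mathfrak{h}|) \) appear. This gives a rational homology equivalence of topological monoids, and it is natural in \( \mathfrak{h} \).

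Next comes the semidirect product. The right action of \( H_y \) on \( \map(\S^n,H_x) \) is precomposition, so it is induced by the action of \( H_y \) on \( \S^n \). By \cref{action models}, this action is modelled by \( \mathfrak{h}_y\to\Der\Lambda(y) \), with \( \epsilon_y\mapsto\partial/\partial y \) and the \( q_{j,y} \) acting by zero. The induced action on \( \Lambda(y)\otimes\mathfrak{h}_x \) is therefore the one displayed before the proposition. Functoriality of the mapping space model in the source cdga then shows that the action of \( \exp_\bullet\mathfrak{h}_y \) on \( \exp_\bullet((\Lambda(y)\otimes\mathfrak{h}_x)\langle 0\rangle) \), by exponentiating derivations, is equivalent, as a pair in the category of triples, to the action of \( H_y \) on \( \map(\S^n,H_x) \). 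Taking semidirect products and using the Remark identifying \( \exp_\bullet(L)\rtimes\exp_\bullet(\ggg)\cong\exp_\bullet(L\rtimes\ggg) \), we obtain a rational equivalence of \( \B(\map(\S^n,H)\rtimes H) \) with \( \B\exp_\bullet \) of the semidirect product dg Lie algebra. The main obstacle I expect is making this equivariance precise: one has to produce a zig-zag of equivalences of triples, not just of underlying spaces. I would try to build it by passing through the simplicial mapping object \( \map(\langle\Lambda(y)\rangle,|\exp_\bullet\mathfrak{h}_x|) \) with the \( \exp_\bullet\mathfrak{h}_y \)-action through \( \langle\Lambda(y)\rangle \). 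I would then use \cref{spaces/BG = G-spaces} and \cref{B and homotopy orbits} to reduce the check to the induced map on homotopy orbits, as in the proof of \cref{action models}.

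Finally comes naturality. The inclusion \( \SO(n+1)\to\aut^+(\S^n) \) induces on rational homotopy the projection \( \sss\to\aaa \), which kills the \( q_j \) and sends \( \epsilon\mapsto\epsilon \). This holds because the Euler class pulls back to the Euler class and \( \aut^+(\S^n) \) has rational homotopy only \( \epsilon \) (n odd). The choices in \cref{action models} were made compatibly: \( \varphi \) for \( \SO(n+1) \) extends the one for \( \aut^+(\S^n) \). Hence all constructions above are natural, and the map \eqref{eq:B of map of Dehn spaces} is modelled by \eqref{eq:model of map of Dehn spaces}. The inclusion \( \aaa\to\sss \), \( \epsilon\mapsto\epsilon \), is a map of dg Lie algebras, since everything is abelian with zero differential. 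It is compatible with the actions, since both act through \( \epsilon \) only. So it induces a dg Lie map in the reverse direction that splits the projection. Realising it with \( \B\exp_\bullet \) gives the rational section.
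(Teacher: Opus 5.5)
Your proposal is correct and follows essentially the paper's own route (the version the paper spells out with \( \exp_{\bullet} \)). That route combines an \( \exp_{\bullet}\mathfrak{h}_y \)-equivariant weak equivalence \( \exp_{\bullet}((\Lambda(y)\otimes\mathfrak{h}_x)\langle 0\rangle)\to\map(\langle\Lambda(y)\rangle,\exp_{\bullet}\mathfrak{h}_x) \) with the semidirect-product isomorphism; the zig-zag-of-triples issue you flag is settled in the paper by identifying \( \map(\S^n,H)\rtimes H \) with \( \aut_{H}(\xi) \) for the trivial bundle \( \xi \) and invoking Berglund's Theorem 3.8 together with \cref{action models}. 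The one step you omit is that the source involves \( \mathrm{C}^{\infty}(\S^n,\SO(n+1)) \) with the Whitney topology rather than \( \map(\S^n,\SO(n+1)) \); the paper first replaces it by continuous maps via the Whitney approximation theorem, and you should add this before applying the mapping-space model.
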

\begin{proof}
  The inclusion \( \mathrm{C}^{\infty}(\S^n,\SO(n+1)) \ra \map(\S^n,\SO(n+1)) \) into the space of continuous maps equipped with the compact-open topology is a weak homotopy equivalence by the Whitney Approximation Theorem.
  See e.g.~\cite[Theorem 6.26]{Lee2013smoothmanifolds}.

  We apply~\cite[Theorem 3.8]{Berglund2022characteristicClassesFamiliesBundles} to provide the rational dg Lie algebra model for~\eqref{eq:B of map of Dehn spaces}.
  The topological monoid \( \map(\S^n,H)\rtimes H \) agrees with the topological monoid denoted by \( \aut_{H}(\xi) \) in the reference when we let \( \xi \) be the trivial \( \S^n \)-bundle given by the projection to the second sphere.
  Since the given action of \( \mathfrak{h} \) on \( \Lambda(y) \) models the action of \( H \) on \( \S^n \) by \cref{action models}, the result follows from loc.\ cit.

  We spell out the details of the proof using the functor \( \exp_{\bullet} \) instead of the functor \( \MCb \) used in the reference because we use the details in the proof of \cref{homotopical Dehn twist model}.
  We provide a natural weak homotopy equivalence of simplicial groups
  \begin{equation}\label{eq:Dehn space exp model}
    \exp_{\bullet}((\Lambda(y)\otimes \mathfrak{h}_{x})\langle{0}\rangle \rtimes \mathfrak{h}_{y})
    \lra
    \map(\langle{\Lambda(y)}\rangle,\exp_{\bullet}\mathfrak{h}_{x})\rtimes \exp_{\bullet}\mathfrak{h}_{y}
  \end{equation}
  where \( \exp_{\bullet}\mathfrak{h}_{y} \) acts on the simplicial set of maps by precomposition by the action defined in \cref{sec:geom-real-diff}.
  (The geometric realisation of the simplicial set of maps between two simplicial sets is naturally weak homotopy equivalent to the topological space of maps between their geometric realisations.)

  The natural map
  \[
    \exp_{\bullet}((\Lambda(y)\otimes \mathfrak{h}_{x})\langle{0}\rangle) \rtimes\exp_{\bullet}\mathfrak{h}_{y}
    \lra
    \exp_{\bullet}((\Lambda(y)\otimes \mathfrak{h}_{x})\langle{0}\rangle \rtimes\mathfrak{h}_{y})
  \]
  that in each simplicial degree sends a pair \( (\alpha,\beta) \) to their Baker--Campbell--Hausdorff product \( \alpha*\beta \) is an isomorphism by \cref{exp and conjugation}.
  To obtain~\eqref{eq:Dehn space exp model} we define a map
  \begin{equation}
    \label{eq:exp Dehn}
    \exp_{\bullet}((\Lambda(y)\otimes \mathfrak{h}_{x})\langle{0}\rangle)
    \lra
    \map(\langle{\Lambda(y)}\rangle,\exp_{\bullet}\mathfrak{h}_{x})
  \end{equation}
  that is equivariant for the actions of \( \exp_{\bullet}\mathfrak{h}_{y} \).
  Let the adjoint of~\eqref{eq:exp Dehn} be the map that sends a pair \( (b\otimes\theta,q) \) of simplices to \( q(b)\otimes \theta \).
  Then~\eqref{eq:exp Dehn} is a weak homotopy equivalence by~\cite[Theorem 6.6]{Berglund2015mappingLinfinity} applied to \( L=s^{-1}\mathfrak{h}_{x} \) (that is, \( \mathfrak{h}_x \) with the grading decreased by \( 1 \)) and \( X=\langle{\Lambda(y)}\rangle \), combined with the following two facts.
  The counit \( \Lambda(y)\ra\Omega\langle{\Lambda(y)}\rangle \) is a quasi-isomorphism.
  There is a natural isomorphism \( \exp\cong\MC s^{-1} \) of functors to the category of sets.
\end{proof}

Consider the dg Lie algebra of derivations \( \Der\Lambda(x,y) \) of the Sullivan model of \( \SnSn \).
Let \( \ppp^{*} \) be the Lie subalgebra of degree 0 derivations spanned by \( y\frac{\partial}{\partial x} \).
Let \( \ppp\le\mathfrak{sl}_2 \) be the Lie subalgebra corresponding to \( P\le \SL_2(\QQ) \).
That is, the Lie algebra of strict upper triangular matrices.

\begin{proposition}\label{model for BautP}
  The map \( \B\aut^P(\SnSn)\ra \B P \) has as a dg Lie algebra model the map
  \begin{equation}\label{eq:model for BautP over BP}
    \Der_{\ppp^{*}}\Lambda(x,y)=
    \QQ\left\langle{\frac{\partial}{\partial x}, \frac{\partial}{\partial y}}\right\rangle \rtimes \QQ\left\langle{y\frac{\partial}{\partial x}}\right\rangle
    \lra
    \ppp
  \end{equation}
  that in degree 0 sends \( -y\frac{\partial}{\partial x} \) to
  \(
  \begin{psmallmatrix}
    0&1\\0&0
  \end{psmallmatrix}
  \).
\end{proposition}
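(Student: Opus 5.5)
Our plan is to model \( \B\aut^P(\SnSn) \) as the homotopy orbits of the homotopy Torelli space by the discrete group \( P\cong\ZZ \). We then replace this discrete action by the action of a nilpotent Lie algebra, using the fact that \( P \) is infinite cyclic and acts unipotently.

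First, the monoid \( \aut^P(\SnSn) \) fits into a sequence \( \htor(\SnSn)\ra\aut^P(\SnSn)\ra P \). By \cref{realisation of automorphisms in Hn} and \cref{Dehn has monodromy P}, this sequence splits up to homotopy: the generalised Dehn twist \( \hdehn \) realises a generator of \( P \), so it gives a homomorphism \( \ZZ\cong P\ra\aut^P(\SnSn) \). By \cref{conjugation=monodromy}, the monodromy of \( \B\aut^P(\SnSn)\ra\B P \) is then conjugation by this Dehn twist on \( \B\htor(\SnSn) \). A Sullivan-type model of \( \B\htor(\SnSn) \), equivalently of its rationalisation, is the dg Lie algebra \( \Der^+\Lambda(x,y)\langle 0\rangle \), that is the positive-degree derivations. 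For \( n \) odd, the only derivations of positive degree on \( \Lambda(x,y) \) with \( |x|=|y|=n \) are \( \partial/\partial x \) and \( \partial/\partial y \), both of degree \( n \) and both central. Hence the Torelli model is the abelian Lie algebra \( \QQ\langle\partial_x,\partial_y\rangle \), with trivial differential. This is consistent with \cref{homotopy Torelli computation}: the Chevalley--Eilenberg cochains give a polynomial algebra on two generators of degree \( n+1 \).

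Next, we compute the action of the Dehn twist on this model. On cohomology the Dehn twist acts by \( x\mapsto x+y \) (up to sign), that is, by \( \exp(y\partial_x) \) on \( \Lambda(x,y) \). Conjugation by \( \exp(y\partial_x) \) on \( \QQ\langle\partial_x,\partial_y\rangle \) is \( \exp(\mathrm{ad}_{y\partial_x}) \), which is unipotent. By the Campbell identity used in the proof of \cref{exp and conjugation}, the \( \ZZ \)-action therefore extends to the simplicial group \( \exp_\bullet(\QQ\langle y\partial_x\rangle) \) acting through \( \exp_{\bullet} \) of the semidirect product. The key step is to compare the homotopy orbits \( (\B\htor)_{\h\ZZ} \) with \( \B\exp_\bullet(\QQ\langle\partial_x,\partial_y\rangle\rtimes\ppp^*) \) over the comparison \( \B\ZZ\ra\B|\exp_\bullet\ppp^*|=K(\QQ,1) \). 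This comparison is a rational homology equivalence, since \( \B\ZZ=\S^1\ra K(\QQ,1) \) is one. It is compatible with the monodromies, and the monodromy action on the fibre's rational homology is unipotent, so the Serre spectral sequences compare. Applying \cref{spaces/BG = G-spaces} and \cref{exp and conjugation} then identifies the map of the statement as the model of \( \B\aut^P\ra\B P \). Finally, the identification \( -y\partial_x\mapsto\begin{psmallmatrix}0&1\\0&0\end{psmallmatrix} \) is read off by comparing the action of \( \exp(-y\partial_x) \) on the Euler classes with the dual-representation convention of \cref{sec:symm-powers-stand}.

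The main obstacle is the comparison step: showing that the discrete \( \ZZ \)-action on the rationalised Torelli space is rationally the same as the action coming from the nilpotent Lie algebra \( \ppp^* \), in the sense of the category of triples. The approach is to construct an explicit \( \ZZ \)-equivariant map, as in the proof of \cref{action models}, from \( \B\htor(\SnSn) \) to \( |\B\exp_\bullet\QQ\langle\partial_x,\partial_y\rangle| \), whose target is a product of two rational Eilenberg--MacLane spaces \( K(\QQ,n+1) \). The equivariance would come from realising the Dehn twist on cochain models by \( \exp(y\partial_x) \). The remaining work is to track the sign conventions to pin down \( -y\partial_x \).
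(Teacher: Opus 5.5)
Your proposal is correct in outline, but it takes a genuinely different route from the paper. The paper does no comparison by hand. It quotes Corollary 4.13 of Berglund--Zeman, which models $\B\aut^P(\SnSn)\to\B P$ by pulling back $\Der\Lambda(x,y)\langle 0\rangle\to\H_0\Der\Lambda(x,y)$ along the Lie algebra $\uuu$ of the minimal unipotent subgroup of $\mathrm{E}(\QQ)\cong\GL_2(\QQ)$ containing the image of $\pi_1$. The only work is then to see that $\uuu=\ppp$ and that $\upsilon$ sends $\begin{psmallmatrix}0&\lambda\\0&0\end{psmallmatrix}$ to $-\lambda y\frac{\partial}{\partial x}$. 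The sign appears because the left action on cohomology is the inverse transpose of the action on homology.

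You instead prove this special case directly, using two features specific to it:
\begin{itemize}
\item $P\cong\ZZ$ is free, so $\B\aut^P(\SnSn)$ is a mapping torus of $\B\htor(\SnSn)$;
\item the fibre is rationally $K(\QQ,n+1)^{\times 2}$.
\end{itemize}
You then compare Serre spectral sequences over $\B\ZZ\to K(\QQ,1)$. This works because $\H^*(\QQ;M)\to\H^*(\ZZ;M)$ is an isomorphism for finite-dimensional unipotent $M$, by induction on a unipotent filtration.

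Your route is elementary and needs only \cref{homotopy Torelli computation} as input. However, it yields only a space-level zig-zag over the base, and it does not extend beyond cyclic monodromy with Eilenberg--MacLane fibres. The paper stays inside the Berglund--Zeman framework, and it also relies on that framework for the monoid-level equivalence $\exp_{\bullet}\Der_{\ppp^{*}}\Lambda(x,y)\simeq\aut^{P\otimes\QQ}\langle\Lambda(x,y)\rangle$ used in \cref{homotopical Dehn twist model}.

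Your ``main obstacle'' is easier than you suggest. Since $\ZZ$ is free, a map of mapping tori needs only a single homotopy $f\circ T\simeq A\circ f$. Since the target is a product of Eilenberg--MacLane spaces, such a homotopy exists as soon as $T$ and $A$ agree on $\H^{n+1}$, and that is exactly the $\Gamma$-equivariance in \cref{homotopy Torelli computation}. No cochain-level realisation of the twist is needed.

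Two small fixes:
\begin{itemize}
\item Use a smooth Dehn twist (from $\dehn$), which is strictly invertible, to get an honest homomorphism $\ZZ\to\aut^P(\SnSn)$ as \cref{conjugation=monodromy} requires.
\item For $n\notin\{1,3,7\}$ the generator of $P$ is $\begin{psmallmatrix}1&2\\0&1\end{psmallmatrix}$. So the twist acts by $\exp(2y\frac{\partial}{\partial x})$, and the generator of $\ZZ$ goes to $2$ in $\QQ$; this is harmless rationally.
\end{itemize}
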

\begin{proof}
  Let \( \mathrm{E} \) be the group of homotopy classes of self homotopy equivalences of \( \SnSn \), and let \( \mathrm{E}(\QQ) \) be that of its rationalisation. Let \( \rho\cl \mathrm{E} \ra \mathrm{E}(\QQ) \) be the homomorphism induced by rationalisation.
  By~\cite[Corollary 4.13]{BerglundZeman2024algebraicmodels}, a rational dg Lie model for \( \B\aut^P(\SnSn)\ra\B P \) is the pullback of the map \( \Der\Lambda(x,y)\langle{0}\rangle\ra \H_{0}\Der\Lambda(x,y) \) along the following map \( \upsilon \).
  Let \( \uuu \) be the Lie algebra of the minimal unipotent subgroup \( U \) of \( \mathrm{E}(\QQ) \) that contains the image under \( \rho \) of the fundamental group \( G \) of \( \B\aut^P(\SnSn) \).
  (Note that \( G \) acts nilpotently on the homology of \( \SnSn \)).
  The isomorphism \( \sigma \) between \( \mathrm{E}(\QQ) \) and the group \( \mathcal{A}\mathrm{ut}\,\Lambda(x,y) \) of homotopy classes of self-automorphisms of the Sullivan model gives the monomorphism \( \upsilon \cl \uuu\ra \H_0\Der\Lambda(x,y) \) on Lie algebras.
  All in all, a rational model is the pullback \( \Der_{\upsilon(\uuu)}\Lambda(x,y) \) along \( \upsilon(\uuu) \) together with its map to \( \uuu \).

  To determine \( \upsilon \), note that the action of \( \mathrm{E}(\QQ) \) on degree \( n \) rational homology, and our fixed basis for \( \H_n(\SnSn;\QQ) \), give an isomorphism \( \mathrm{E}(\QQ)\cong \GL_2(\QQ) \).
  Under this identification, the image of \( G \) under \( \rho \) is \( P \).
  The minimal unipotent subgroup of \( \GL_2(\QQ) \) that contains \( P \) is the subgroup of upper triangular matrices.
  Its Lie algebra \( \ppp \) is that of strict upper triangular matrices.
  The action of \( \mathrm{E}(\QQ) \) on degree \( n \) rational cohomology implements the isomorphism \( \sigma \) between \( \mathrm{E}(\QQ) \) and \( \mathcal{A}\mathrm{ut}\,\Lambda(x,y)=\GL(\QQ\langle{x,y}\rangle) \).
  Therefore, \( \upsilon\cl \ppp\ra\mathfrak{gl}(\QQ\langle{x,y}\rangle) \) sends a matrix
  \(
  \begin{psmallmatrix}
    0&\lambda\\0&0
  \end{psmallmatrix}
  \)
  to \( -\lambda y\frac{\partial}{\partial x} \) and \( \upsilon(\ppp)=\ppp^{*} \).
  To conclude, observe that in positive homological degrees \( \Der\Lambda(x,y) \) consists of the rational vector space \( \QQ\langle{\frac{\partial}{\partial x}, \frac{\partial}{\partial y}}\rangle \) of derivations of degree \( n \).
\end{proof}

Consider the canonical maps
\[
  \Der\Lambda(x)
  \lra
  \Der\Lambda(x,y)
  \qquad
  \Der\Lambda(y)
  \lra
  \Der\Lambda(x,y).
\]
We use the same notation for a derivation and its image under these maps.
The map
\[
  \Lambda(y)\otimes \Der\Lambda(x)
  \lra
  \Der\Lambda(x,y)
\]
that sends \( b\otimes\theta \) to the derivation \( (b\theta)(z)=b\theta(z) \) is \( \Der\Lambda(y) \)-equivariant, the action on the target being the adjoint representation.
Therefore we get a maps of dg Lie algebras
\begin{equation}\label{eq:homotopical Dehn twist model}
  (\Lambda(y)\otimes \aaa_x)\rtimes\aaa_{y}
  \lra
  \Lambda(y)\otimes\Der\Lambda(x)\rtimes\Der\Lambda(y)
  \lra
  \Der\Lambda(x,y)
\end{equation}
when combined with~\eqref{eq:action of h}.

\begin{proposition}\label{homotopical Dehn twist model}
  The map~\eqref{eq:homotopical Dehn twist model} is a dg Lie algebra model for the map
  \[
    \B(\hDehnH) \lra \B\aut^P(\SnSn) .
  \]
\end{proposition}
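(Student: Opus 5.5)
We realise both sides through the functor \( \exp_{\bullet} \) and compare them with \( \hdehn \) directly, reusing the explicit equivalences from the proofs of \cref{model for Dehn spaces} and \cref{model for BautP}.

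\textbf{Domain.} By the proof of \cref{model for Dehn spaces} with \( H=\aut^+(\S^n) \), we have a weak equivalence of simplicial groups
\[
  \exp_{\bullet}((\Lambda(y)\otimes\ax)\langle{0}\rangle\rtimes\ay)\simeq\map(\langle\Lambda(y)\rangle,\exp_\bullet\ax)\rtimes\exp_\bullet\ay ,
\]
and the right-hand side models \( \hDehnH \).

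\textbf{Target.} By \cref{model for BautP}, \( \B\aut^P(\SnSn) \) is modelled by \( \Der_{\ppp^{*}}\Lambda(x,y) \). At the level of monoids this comes from the zig-zag of Berglund--Zeman: \( \exp_\bullet \) of the derivation Lie algebra acts on \( \langle\Lambda(x,y)\rangle \) by exponentiating derivations, as in \cref{sec:geom-real-diff}, and this action models the action of \( \aut^P(\SnSn) \) on \( \SnSn \) after rationalisation.

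\textbf{Compatibility square.} It then suffices to show that the square
\[
\begin{tikzcd}
\exp_\bullet((\Lambda(y)\otimes\ax)\langle0\rangle\rtimes\ay)\ar[r]\ar[d] & \exp_\bullet\Der_{\ppp^*}\Lambda(x,y)\ar[d]\\
\map(\langle\Lambda(y)\rangle,\exp_\bullet\ax)\rtimes\exp_\bullet\ay\ar[r] & \aut(\langle\Lambda(x,y)\rangle)
\end{tikzcd}
\]
commutes, where the top map is \( \exp_{\bullet} \) of \eqref{eq:homotopical Dehn twist model} and the bottom map is the simplicial analogue of \( \hdehn \), namely \( (f,g)\mapsto((x,y)\mapsto(f(y)(x),g(y))) \). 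The right vertical map is the exponential action. Note that \( \langle\Lambda(x,y)\rangle=\langle\Lambda(x)\rangle\times\langle\Lambda(y)\rangle \).

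To check commutativity, use the Remark after \cref{exp and conjugation}: every simplex of the semidirect product factors as \( \alpha*\beta \) with \( \alpha \) in the ideal and \( \beta \) in \( \ay \). It is then enough to treat the two factors separately.
\begin{itemize}
\item For \( \beta=t\epsilon_y \), the exponential of \( t\,\partial/\partial y \) acts on \( \langle\Lambda(y)\rangle \) through the model of \cref{action models}. This is exactly the second coordinate \( g(y) \).
\item For \( \alpha=\sum b_i\otimes\epsilon_x \), the derivation \( b\,\partial/\partial x \) has exponential \( x\mapsto x+b \), since the derivation squares to zero. On realisations this sends a point \( (u,v) \) to \( (\exp(b(v))\cdot u,\, v) \), where \( b(v) \) is evaluated through the adjoint used in \eqref{eq:exp Dehn}. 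This is the first coordinate \( f(y)(x) \).
\end{itemize}
The image of \eqref{eq:homotopical Dehn twist model} lands in \( \Der_{\ppp^*} \), because in degree \( 0 \) it hits \( y\frac{\partial}{\partial x} \). This matches \cref{Dehn has monodromy P}.

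\textbf{Main obstacle.} Making "the exponential action models \( \aut^P(\SnSn) \)" precise at the level of maps of monoids, rather than only of classifying spaces, is the hard step. The comparison in \cref{model for BautP} is a zig-zag through the pullback construction of Berglund--Zeman (Corollary 4.13), and we must check that the diagram above is compatible with that zig-zag, including the map down to \( \B P \) versus \( \ppp \). My first attempt would be to pass to the category of triples: model the pair \( (\hDehnH,\SnSn) \) and the pair \( (\aut^P,\SnSn) \) by actions on \( \langle\Lambda(x,y)\rangle \), then invoke \cref{spaces/BG = G-spaces} and \cref{B and homotopy orbits}. This reduces the claim to identifying homotopy orbits, that is, Chevalley--Eilenberg cochains with coefficients in \( \Lambda(x,y) \). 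That identification is a cdga check of the same kind as in \cref{action models}.
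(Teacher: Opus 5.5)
Your proposal is essentially the paper's proof: the same square \eqref{eq:map modelling} (with \( \aut^{P\otimes\QQ}\langle{\Lambda(x,y)}\rangle \) in the lower right corner), the same vertical equivalences from \eqref{eq:Dehn space exp model} and the proof of \cite[Proposition 3.13]{BerglundZeman2024algebraicmodels}, and the same commutativity check, done by writing a simplex as \( (b\otimes\theta)*\delta \) and evaluating each factor on a pair of simplices \( (p,q) \). The step you flag as the main obstacle is handled in the paper by citing the standard zig-zag relating the bottom map to \( \hdehn \) (\cite[Lemma 3.1]{Berglund2020autfiberbundles}, \cite[Proposition 3.10]{BerglundZeman2024algebraicmodels}), induced on the source by the pair \( (\varphi,\phi) \) from the proof of \cref{action models}, so no detour through homotopy orbits, Corollary 4.13, or the map to \( \ppp \) is needed.
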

\begin{proof}
  The maps of simplicial monoids
  \begin{gather*}
    \exp_{\bullet}\ay
    \lra
    \aut^+\langle{\Lambda(y)}\rangle
    \\
    \exp_{\bullet}\Der_{\ppp^{*}}(\Lambda(x,y))
    \lra
    \aut^{P\otimes\QQ}\langle{\Lambda(x,y)}\rangle
  \end{gather*}
  induced by the actions by derivations of cdga's are weak homotopy equivalences by the proof of~\cite[Proposition 3.13]{BerglundZeman2024algebraicmodels}.
  Together with~\eqref{eq:Dehn space exp model}, these provide the vertical weak homotopy equivalences in the diagram
  \begin{equation}\label{eq:map modelling}
    \begin{tikzcd}
      \exp_{\bullet}((\Lambda(y)\otimes \ax)\langle{0}\rangle \rtimes \ay)
      \ar[r]
      \ar[d]
      &\exp_{\bullet}\Der_{\ppp^{*}}(\Lambda(x,y))
      \ar[d]
      \\
      \map(\langle{\Lambda(y)}\rangle,\aut^+\langle{\Lambda(x)}\rangle)\rtimes\aut^+\langle{\Lambda(y)}\rangle
      \ar[r]
      &\aut^{P\otimes\QQ}\langle{\Lambda(x,y)}\rangle
    \end{tikzcd}
  \end{equation}
  the commutativity of which we have to prove. The top map is \( \exp_{\bullet} \) applied to~\eqref{eq:homotopical Dehn twist model}, the bottom map is defined as \( \hdehn \) is,
  using the natural isomorphism
  \begin{equation}
    \label{eq:realisation and product}
    \langle{\Lambda(x)}\rangle \times \langle{\Lambda(y)}\rangle \cong \langle{\Lambda(x,y)}\rangle .
  \end{equation}
  A zig-zag between the bottom map and \( \hdehn \) consisting of maps that induce an isomorphism on all rational homotopy and homology groups of their classifying spaces can be given as in~\cite[Lemma 3.1]{Berglund2020autfiberbundles} and~\cite[Proposition 3.10]{BerglundZeman2024algebraicmodels}.
  On the source of the bottom map of~\eqref{eq:map modelling}, this zig-zag is induced by \( (\varphi,\phi) \), defined in the proof of \cref{action models}.

  To conclude, we check that the diagram~\eqref{eq:map modelling} commutes.
  Under the natural isomorphism~\eqref{eq:realisation and product} a pair of simplices \( (p,q) \) corresponds to their coproduct \( p\otimes q \).
  We also write \( p \) and \( q \) for the maps \( p\otimes 1 \) and \( 1\otimes q \), respectively, where \( 1 \) denotes the constant map to the unit.
  Therefore, we use the notation \( pq \) for \( p\otimes q \).
  The adjoint of the bottom composite in~\eqref{eq:map modelling}
  sends \( (b\otimes\theta)*\delta \) and a pair of simplices \( (p,q) \), to the pair
  \[
    ( e^{q(b)\otimes\theta}(p), e^{\delta}(q) ) .
  \]
  The adjoint of the top composite sends \( (b\otimes\theta)*\delta \) and \( pq \), to
  \begin{align*}
    e^{(b\theta)*\delta}(pq)
    &=(e^{b\theta}e^{\delta})(pq)
    \\
    &=e^{b\theta}(pq)e^{\delta}(pq)
      \\
    &=e^{q(b)\theta}(p)e^{\delta}(q).
      \qedhere
  \end{align*}
\end{proof}

\begin{proposition}\label{monodromy P action on Dehn space}
  The map
  \begin{equation}
    \label{eq:smooth Dehn fibration over BP}
    \B(\DehnH)
    \lra
    \B P
  \end{equation}
  has as a dg Lie model the map
  \begin{equation}
    \label{eq:model smooth Dehn fibration over BP}
    (\Lambda(y)\otimes \sx)\langle{0}\rangle\rtimes \sy
    \lra
    \ppp
  \end{equation}
  that sends \( -y\otimes\epsilon_{x} \) to
  \(
  \begin{psmallmatrix}
    0&1\\0&0
  \end{psmallmatrix}
  \)
  and all other basis elements to zero.
  The monodromy action of~\eqref{eq:smooth Dehn fibration over BP} is modelled by the action of \( \ppp \) on the kernel of~\eqref{eq:model smooth Dehn fibration over BP} given by the standard representation on the summand \( \ax\oplus\ay \), and the trivial action on the complementary summand.
\end{proposition}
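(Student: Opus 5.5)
The plan is to obtain the model for \eqref{eq:smooth Dehn fibration over BP} by composing two earlier models: the map \eqref{eq:model of map of Dehn spaces} of \cref{model for Dehn spaces}, the map \eqref{eq:homotopical Dehn twist model} of \cref{homotopical Dehn twist model}, and the map \eqref{eq:model for BautP over BP} of \cref{model for BautP}. The map \eqref{eq:smooth Dehn fibration over BP} factors as
\[
  \B(\DehnH)\lra\B(\hDehnH)\lra\B\aut^P(\SnSn)\lra\B P,
\]
since \( \dehn \) and \( \hdehn \) both induce \( P \) on homology by \cref{Dehn has monodromy P}. So a model is the composite of the corresponding dg Lie maps.

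The first step is to compute this composite on the degree \( 0 \) part. The source \( (\Lambda(y)\otimes\sx)\langle{0}\rangle\rtimes\sy \) has, in degree \( 0 \), only \( y\otimes\epsilon_x \), because \( |y|=-n \) and \( |\epsilon_x|=n \); the \( q_{j,x} \) have degree \( 4j-1 \) and \( 4j-1-n\ne 0 \) since \( n \) is odd. The projection \( \sss\to\aaa \) sends \( y\otimes\epsilon_x \) to \( y\otimes\epsilon_x \). Then \eqref{eq:homotopical Dehn twist model} sends this to the derivation \( y\frac{\partial}{\partial x} \). By \cref{model for BautP}, \( -y\frac{\partial}{\partial x} \) goes to \( \begin{psmallmatrix}0&1\\0&0\end{psmallmatrix} \). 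The target \( \ppp \) is concentrated in degree \( 0 \), so every positive-degree basis element goes to zero. This gives the claimed formula.

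The second step identifies the kernel and the monodromy. The kernel \( K \) is the positive-degree part of the source, spanned by
\[
  1\otimes\epsilon_x,\quad y\otimes q_{j,x},\quad 1\otimes q_{j,x},\quad \epsilon_y,\quad q_{j,y}.
\]
The source is a semidirect product \( K\rtimes\QQ\langle{-y\otimes\epsilon_x}\rangle \), with the section given by the degree \( 0 \) line. Hence \cref{exp and conjugation}, together with \cref{conjugation=monodromy} as in its proof, shows that the monodromy is modelled by the adjoint action of \( -y\otimes\epsilon_x \) on \( K \).

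Next compute the brackets. Writing \( u=-y\otimes\epsilon_x \), the semidirect product bracket gives \( [\epsilon_y,y\otimes\epsilon_x]=1\otimes\epsilon_x \), up to sign. Also \( [1\otimes\epsilon_x,y\otimes\epsilon_x]=0 \), because \( \sx \) is abelian; this holds since \( \aut^+(\S^n) \) and \( \SO(n+1) \) have abelian rational homotopy Lie algebras, as these are H-spaces. Thus \( \mathrm{ad}_u \) sends \( \epsilon_y\mapsto\pm(1\otimes\epsilon_x) \) and \( 1\otimes\epsilon_x\mapsto 0 \). This is the standard nilpotent representation on \( \ax\oplus\ay=\QQ\langle{1\otimes\epsilon_x,\epsilon_y}\rangle \). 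On the other generators:
\begin{itemize}
  \item \( [y\otimes\epsilon_x,y\otimes q_{j,x}] \) and \( [y\otimes\epsilon_x,1\otimes q_{j,x}] \) vanish because \( \sx \) is abelian;
  \item \( [y\otimes\epsilon_x,q_{j,y}]=0 \) because \( q_{j,y} \) acts trivially.
\end{itemize}
So the action is trivial on the complement.

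The main obstacle is the sign and basis convention. The standard representation must match the identification \( \upsilon \) of \cref{model for BautP}, which is possibly transposed. I would fix this by checking that the action of \( \begin{psmallmatrix}0&1\\0&0\end{psmallmatrix} \) on \( \ax\oplus\ay \) agrees with the induced action on \( \pi_n \) under the identification with \( \H_n \). If needed, I would rescale \( \epsilon_y \) by \( -1 \).
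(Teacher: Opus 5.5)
Your argument follows the paper's proof. You take the composite of \eqref{eq:model of map of Dehn spaces}, \eqref{eq:homotopical Dehn twist model} and \eqref{eq:model for BautP over BP}, write the source as the kernel semidirect the line spanned by \(-y\otimes\epsilon_x\), and read off the monodromy via \cref{B and homotopy orbits} as the adjoint action of that element.

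\textbf{The parity claim is wrong.} You say \(4j-1-n\neq 0\) because \(n\) is odd, but \(4j-1-n\) is a difference of odd numbers, hence even. It vanishes for \(j=(n+1)/4\) whenever \(n\equiv 3 \pmod 4\), for example \(n=3, j=1\) or \(n=7, j=2\); these are exactly the cases where \(\pi_n(\SO(n+1))\otimes\QQ\) is two-dimensional. In those cases \(y\otimes q_{(n+1)/4,x}\) is a degree-\(0\) cycle. So the degree-\(0\) part of the source is two-dimensional, and the kernel is strictly larger than the positive-degree part. The repair is immediate and the conclusion does not change:
\begin{enumerate}
  \item this element is killed by the projection \(\mathfrak{s}\to\mathfrak{a}\), so the formula for the map still holds;
  \item it lies in the kernel;
  \item its bracket with \(y\otimes\epsilon_x\) vanishes since \(y^2=0\), so it belongs to the trivially acted complement.
\end{enumerate}
Your list of generators and bracket computations already cover it once the degree bookkeeping is fixed.

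\textbf{Abelianness of \(\mathfrak{s}\).} Justify this by parity rather than by the H-space structure, which alone does not kill Samelson products. \(\mathfrak{s}\) is concentrated in odd degrees, so every bracket lands in an even degree, where \(\mathfrak{s}\) is zero.

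\textbf{The sign is not ambiguous.} Since \(|y\otimes\epsilon_x|=0\), the semidirect-product bracket gives \([-y\otimes\epsilon_x,\epsilon_y]=\rho(\epsilon_y)(y\otimes\epsilon_x)=1\otimes\epsilon_x\) on the nose. This is precisely the standard action of \(\begin{psmallmatrix}0&1\\0&0\end{psmallmatrix}\) on the basis \((\epsilon_x,\epsilon_y)\), matching the paper's computation. No rescaling of \(\epsilon_y\) is needed, and you should not rescale anyway. This basis is the one dual to the Euler classes \(e_x,e_y\) when the proposition is used later to prove \(P\)-equivariance of the retraction.
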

\begin{proof}
  The map~\eqref{eq:model smooth Dehn fibration over BP} is a model because it is the composite of~\eqref{eq:model of map of Dehn spaces}, \eqref{eq:homotopical Dehn twist model} and \eqref{eq:model for BautP over BP}.
  The statement about the monodromy action follows from \cref{B and homotopy orbits} once we identify \( \mathfrak{d}=(\Lambda(y)\otimes \sx)\langle{0}\rangle\rtimes \sy \) as the semidirect product of the action given in the statement.
  Let \( \operatorname{ad}\cl\mathfrak{d} \ra \Der(\mathfrak{k}) \) be the restriction of the adjoint representation of \( \mathfrak{d} \) to the kernel \( \mathfrak{k} \) of~\eqref{eq:model smooth Dehn fibration over BP}.
  Let \( \sigma \) be the section of~\eqref{eq:model smooth Dehn fibration over BP} that sends
  \(
  \begin{psmallmatrix}
    0&1\\0&0
  \end{psmallmatrix}
  \)
  to \( -y\otimes \epsilon_{x} \).
  The action \( \operatorname{ad}\circ\operatorname{\sigma}\cl\ppp\ra\Der(\mathfrak{k}) \) is given by
  \[
    \begin{psmallmatrix}
      0&1\\0&0
    \end{psmallmatrix}
    \cdot a
    =[-y\frac{\partial}{\partial x},a]
    =[a,y\frac{\partial}{\partial x}]
    =\left\{
    \begin{array}{ll}
      \epsilon_{x} &\text{if }a=\epsilon_y
      \\
      0&\text{if }a\ne\epsilon_y
    \end{array}
  \right.
  .
  \qedhere
  \]
\end{proof}

\section{An injection of characteristic classes}\label{sec:injection of characteristic classes}

In this section, we prove \cref{intro:injection of characteristic classes}.
We derive it from a result that holds for the \( d \)-fold product \( \xSn \).

Let \( \Tor(\xSn) \) be the topological subgroup of \( \Diff^+(\xSn) \) formed by those path components consisting of diffeomorphisms that induce the identity on homology; cf. \cref{sec:class-fibr-1}.
We consider the map
\begin{equation*}
  \label{eq:Euler map to Tor}
  \EulerMap
  \cl
  \EulerRing
  \lra
  \H^{*}(\B\Tor(\xSn);\QQ)
\end{equation*}
from the polynomial algebra spanned by the universal Euler classes of \cref{sec:euler-classes-SnSn-fibrations}.
By \cref{homotopy Torelli computation}, it is recovered as the map induced on rational cohomology by
\[
  \B\Tor(\xSn)
  \lra
  \B\htor(\xSn).
\]

The map of \( \Gamma \)-representations \( \EulerMap \) is injective.
A retraction of \( \EulerMap \) as vector spaces can be defined as follows.
Consider the map
\begin{equation*}
  \label{eq:product of vector bundles}
  \FibProd
  \cl
  \times_{d}\B\SO(n+1)
  \lra
  \B\Tor(\xSn)
\end{equation*}
which on topological groups is given by sending a tuple of maps to their product.
We write the rational cohomology of \( \times_{\x}\B\SO(n+1) \) as
\begin{equation}
  \label{eq:Q cohomology of SO}
  \EulerPontryaginRing,
\end{equation}
where \( p_{j,i} \) is the \( j \)th Pontryagin class in the \( i \)th factor of \( \times_{\x}\B\SO(n+1) \).
It is a polynomial algebra in the given variables.
The restriction to \( \EulerRing \) of the map induced by \( \FibProd \) on rational cohomology is the inclusion, of which the projection defines a retraction.
We obtain a retraction of \( \EulerMap \)
\begin{equation*}
  \label{eq:retraction to Euler classes}
  \Retr
  \cl
  \H^{*}(\B\Tor(\xSn);\QQ)
  \lra
  \EulerRing.
\end{equation*}

\begin{theorem}
  \label{Torelli equivariant splitting}
  The retraction \( \Retr \) is \( \Gamma \)-equivariant.
  In particular,
  \[
    \EulerMap
    \cl
    \EulerRing
    \lra
    \H^{*}(\B\Tor(\xSn);\QQ)
  \]
  is split injective as a map of \( \Gamma \)-representations.
\end{theorem}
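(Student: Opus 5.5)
The plan is to reduce equivariance of \( \Retr \) to a statement about a Torelli-type subgroup of \( \Gamma \)-representing diffeomorphisms, and then check it generator by generator.

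\emph{Reduction.} Since \( \Retr\circ\EulerMap=\id \) and \( \EulerMap \) is \( \Gamma \)-equivariant by \cref{homotopy Torelli computation}, it suffices to show \( \Retr(\gamma\cdot c)=\gamma\cdot\Retr(c) \) for \( \gamma \) in a generating set of \( \Gamma \). By construction, \( \Retr=\pi\circ\FibProd^{*} \), where \( \pi \) is the quotient of \eqref{eq:Q cohomology of SO} by the ideal \( I \) generated by the Pontryagin classes \( p_{j,i} \). The action of \( \gamma \) on \( \H^{*}(\B\Tor(\xSn);\QQ) \) is the monodromy of \( \B\Tor\ra\B\Diff^{\Gamma} \), i.e.\ conjugation by a diffeomorphism \( g \) realising \( \gamma \) (\cref{conjugation=monodromy}). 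Hence the claim amounts to
\[
  \FibProd^{*}(c_g^{*}c)\equiv \gamma\cdot\FibProd^{*}(c) \pmod I .
\]

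\emph{Easy generators.} For signed permutation matrices, realised by permuting factors and reflecting spheres, \( g \) normalises \( \times_d\SO(n+1) \). Conjugation then acts on \eqref{eq:Q cohomology of SO} by permuting and sign-changing the \( e_i \), and by permuting the \( p_{j,i} \). This preserves \( I \) and induces the correct action on \( \EulerRing \).

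\emph{Elementary matrices.} Here I use \cref{sec:gener-dehn-twists}. By taking products with \( \SO(n+1) \) on the other \( d-2 \) factors, it suffices to treat \( d=2 \) and the generator of \( P \). The inclusion of constant maps \( \SO(n+1)\times\SO(n+1)\ra\map(\S^n,\SO(n+1))\rtimes\SO(n+1) \), composed with \( \dehn \), factors \( \FibProd \) through the Torelli part of \( \B(\DehnH) \), namely the homotopy fibre of \eqref{eq:smooth Dehn fibration over BP}. This factorisation is compatible with the monodromies of \( \B(\DehnH)\ra\B P \) and \( \B\Diff^{P}\ra\B P \), since \( \dehn \) is a map over \( \B P \). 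By \cref{monodromy P action on Dehn space}, the monodromy on the fibre is modelled by \( \ppp \) acting by the standard representation on \( \QQ\langle\epsilon_x,\epsilon_y\rangle \) and trivially on the complement, which contains the \( q_{j} \). Dually, on cohomology the Pontryagin classes, and hence the ideal they generate, are fixed. The Euler classes, in turn, transform by the dual standard representation. So the restriction map to \( \H^*(\times_2\B\SO(n+1);\QQ) \) intertwines the \( P \)-actions modulo \( I \). The transposed elementary matrices are handled by conjugating with the swap.

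\emph{Generation and expected obstacle.} It remains to know that these elements, namely elementary matrices in \( \Gamma \) (or their appropriate powers) and signed permutations, generate \( \Gamma \). This holds for \( \SL_d(\ZZ) \). For the theta-type group I would cite the generators used in \cite{lucasDiffeomorphismsProductSpheres2002a} or \cite{Wall1962killingOdd}, which are precisely Dehn twists and permutations. I expect the main obstacle to be the passage from the dg Lie model of the monodromy to an honest statement on rational cohomology of the fibre. This requires identifying the restriction from the Dehn-Torelli fibre to \( \times_2\B\SO(n+1) \) on cohomology together with the \( P \)-action. I would first attempt this by computing \( \CE \) of the kernel in \cref{monodromy P action on Dehn space} and checking that the constant-map subalgebra \( \sss_x\oplus\sss_y \) is \( \ppp \)-stable.
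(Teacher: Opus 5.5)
Your proposal is correct and follows essentially the paper's own route. The paper checks $\Sigma$-equivariance via the reflection/permutation section, whose conjugation action normalises $\times_d\SO(n+1)$. It checks $\T$-equivariance by factoring $\FibProd$ through the homotopy fibre of $\B(\DehnH)\to\B P$ and reading off the $\ppp$-action from \cref{monodromy P action on Dehn space} at the level of dg Lie models, which is exactly how it handles the step you flag as the main obstacle. It then concludes with the generation statement \cref{transvections and signed permutations generate}.

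One small correction: the plain swap of factors is orientation-reversing for odd $n$, so the transposed transvections must come from conjugating by a signed swap in $\Sigma$. In fact they need no separate treatment, since equivariance for $\Sigma$ and $P$ already implies it for all their conjugates.
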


\begin{corollary}\label{injection of fibration classes}
    The map induced by
          \begin{equation}\label{eq:split injection of fibration classes}
            \B\Diff^{+}(\SnSn)
            \lra
            \B\aut^{+}(\SnSn)
          \end{equation}
          on rational cohomology is injective.
\end{corollary}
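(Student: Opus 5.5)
We deduce the corollary from \cref{Torelli equivariant splitting} with \( \x=2 \) by comparing the Serre spectral sequences of the two maps to \( \B\Gamma \). By \cref{realisation of automorphisms in Hn}, diffeomorphisms and orientation-preserving homotopy equivalences realise the same group \( \Gamma\le\SL_2(\ZZ) \) on \( \H_n \). Hence the map \eqref{eq:split injection of fibration classes} sits in a map of fibration sequences
\[
  \begin{tikzcd}
    \B\Tor(\SnSn) \ar[r]\ar[d] & \B\Diff^{+}(\SnSn)\ar[r]\ar[d] & \B\Gamma\ar[d,equal]\\
    \B\htor(\SnSn)\ar[r] & \B\aut^{+}(\SnSn)\ar[r] & \B\Gamma
  \end{tikzcd}
\]
over the identity of \( \B\Gamma \). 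This diagram induces a map from the Serre spectral sequence of the bottom row to that of the top row. On second pages the map is \( \H^p(\Gamma;\EulerMap) \), from \( \H^p(\Gamma;\QQ[\ex,\ey]) \) to \( \H^p(\Gamma;\H^*(\B\Tor(\SnSn);\QQ)) \), using \cref{homotopy Torelli computation} to identify the bottom \( \mathrm{E}_2 \)-page.

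The second step is to show the top spectral sequence degenerates. By \cref{vcd of Gamma}, \( \H^p(\Gamma;V)=0 \) for \( p>1 \) and every rational \( \Gamma \)-module \( V \), including \( V=\H^q(\B\Tor(\SnSn);\QQ) \). So both \( \mathrm{E}_2 \)-pages are concentrated in the columns \( p=0,1 \), all differentials vanish, and both sequences collapse.

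Consequently, in each total degree \( m \) there are short exact sequences
\[
  0\to \mathrm{E}_2^{1,m-1}\to \H^m(\text{total})\to \mathrm{E}_2^{0,m}\to 0,
\]
natural with respect to the map of spectral sequences. By \cref{Torelli equivariant splitting}, \( \EulerMap \) is a split injection of \( \Gamma \)-modules. Since group cohomology is additive, \( \H^p(\Gamma;\EulerMap) \) is injective for every \( p \), so the map is injective on both \( \mathrm{E}_2^{0,*} \) and \( \mathrm{E}_2^{1,*} \). The five lemma, or a direct diagram chase on the two-step filtration, then shows that the map on total cohomology is injective in each degree. For the bottom row, \( \mathrm{E}_2^{0,*} \) is just \( \QQ \) in degree zero, since the invariants are trivial, but the argument does not need this.

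The step needing the most care is justifying that the comparison map on \( \mathrm{E}_2 \)-pages really is \( \H^*(\Gamma;\EulerMap) \) with \( \Gamma \)-module structures matching. The fibre inclusion \( \B\Tor\to\B\htor \) must be equivariant for the monodromy actions of \( \Gamma=\pi_1\B\Gamma \). This holds because the fibre square is a pullback of homotopy fibres over the same base, so the induced map on local systems is one of \( \Gamma \)-modules. \( \EulerMap \) is exactly the map on fibres, as recorded before \cref{Torelli equivariant splitting}. We also use that \( \B\Gamma \) here is the classifying space of a discrete group, so the Serre spectral sequence with local coefficients computes group cohomology.
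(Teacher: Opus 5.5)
Your proposal is correct and is essentially the paper's proof. Like the paper, you compare the Serre spectral sequences of the two maps to \( \B\Gamma \), identify the map on \( E_2 \)-pages as group cohomology of \( \Gamma \) applied to the \( \Gamma \)-equivariantly split injection of \cref{Torelli equivariant splitting}, and use \cref{vcd of Gamma} to see that both spectral sequences collapse; the only difference is that you spell out the final diagram chase on the two-step filtration, which the paper leaves implicit.
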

\begin{proof}
  The map~\eqref{eq:split injection of fibration classes} over \( \B \Gamma \) induces a map between Serre spectral sequences.
  On the second page it takes the form
  \begin{equation}\label{eq:second page Torelli}
    \H^*(\Gamma;\H^{*}(\B\htor(\SnSn);\QQ))
    \lra
    \H^*(\Gamma;\H^{*}(\B\Tor(\SnSn);\QQ)),
  \end{equation}
  and the map of \( \Gamma \)-representations inducing it can be identified with the map
  \begin{equation*}
    \label{eq:3}
    \EulerMap\cl
    \EulerRing \lra\H^{*}(\B\Tor(\xSn);\QQ) ,
  \end{equation*}
  by \cref{SnSn characteristic classes}.
  \( \EulerMap \) is split injective as a map of \( \Gamma \)-representations, by \cref{Torelli equivariant splitting}.
  Both spectral sequences collapse at the second page because the virtual cohomological dimension of \( \Gamma \) is \( 1 \) (see \cref{vcd collapse}).
\end{proof}

\begin{remark}
  If \( \x>2 \), then \( \Retr \) also induces a retraction of \eqref{eq:second page Torelli} in the category of bigraded vector spaces.
  We do not know if the splitting is compatible with the differentials.
  In particular, we do not know if \cref{Torelli equivariant splitting} can be lifted to a space-level statement, rationally.
  The latter would also imply that the splitting on the rational cohomology of \eqref{eq:split injection of fibration classes} is compatible with the ring structures.
  These statements do hold for \( \times_{\x}\S^{1} \); cf.\ \eqref{eq:chi for S1}.
\end{remark}

In the remainder of this section, we prove \cref{Torelli equivariant splitting}.

Let \( \T \) be the subgroup of \( \Gamma \) generated by the subset
\begin{gather*}
  \{\T_{ij}\mid 1\le i,j\le \x\}\quad\text{if }n\in\{1,3,7\},
  \\
  \{\T_{ij}^{2}\mid 1\le i,j\le \x\}\quad\text{if }n\notin\{1,3,7\},
\end{gather*}
where \( \T_{ij} \) has \( 1 \) in each diagonal entry, \( 1 \) in the entry \( (i,j) \), and \( 0 \) elsewhere.

Let \( \Sigma \) be the group of signed permutation matrices with determinant equal to \( 1 \) and size \( d \).
That is, the kernel
\[
  1
  \lra
  \Sigma
  \lra
  \mathrm{O}_{\x}(\ZZ)
  \lra[\mathrm{det}]
  \{\pm 1\}
  \lra
  1
\]
where \( \mathrm{O}_\x(\ZZ) \) are the orthogonal matrices with integer entries.
It is also known as the hyperoctahedral group.
It is a subgroup of \( \Gamma \).

\begin{lemma}\label{transvections and signed permutations generate}
  \( \Gamma \) is generated by \( \T \) and \( \Sigma \).
\end{lemma}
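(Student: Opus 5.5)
We split into the two cases of \cref{realisation of automorphisms in Hn}.

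\emph{Case \( n\in\{1,3,7\} \), so \( \Gamma=\SL_{\x}(\ZZ) \).} Here \( \T \) is generated by the elementary transvections \( \T_{ij} \), \( i\ne j \). It is classical that these generate \( \SL_{\x}(\ZZ) \): starting from \( A\in\SL_{\x}(\ZZ) \), we perform integral row operations \( R_i\mapsto R_i\pm R_j \), i.e.\ left multiplication by \( \T_{ij}^{\pm1} \). Running the Euclidean algorithm on the first column brings it to \( \pm e_1 \). We then clear the first row and induct on \( \x \). This ends at a diagonal matrix with entries \( \pm1 \) and determinant \( 1 \). Such a matrix lies in \( \Sigma \), and in fact is itself a product of transvections. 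So \( \T \) and \( \Sigma \) generate \( \Gamma \).

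\emph{Case \( n\notin\{1,3,7\} \).} Now \( \Gamma \) consists of the determinant-one integer matrices with exactly one odd entry in each row. Reducing mod \( 2 \), these are exactly the matrices whose reduction is a permutation matrix. So \( \Gamma \) is the preimage under \( \SL_{\x}(\ZZ)\ra\SL_{\x}(\ZZ/2) \) of the permutation matrices \( S_{\x}\le\SL_{\x}(\ZZ/2) \). This gives an exact sequence
\[
  1\lra \Gamma(2)\lra \Gamma\lra S_{\x}\lra 1,
\]
where \( \Gamma(2) \) is the principal congruence subgroup of level \( 2 \).

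The map \( \Sigma\ra S_{\x} \), which forgets signs, is surjective, since any permutation matrix can be given signs so that it has determinant \( 1 \). It therefore suffices to show that \( \Gamma(2) \) is generated by \( \T \) together with \( \Sigma\cap\Gamma(2) \). The latter is the group of diagonal sign matrices of determinant \( 1 \).

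For this I would invoke the classical fact that \( \Gamma(2) \) is generated by the squares \( \T_{ij}^2 \) together with the diagonal \( \pm1 \) matrices. For \( \x=2 \) this is the well-known presentation \( \Gamma(2)=\{\pm1\}\times F\bigl(\T_{12}^2,\T_{21}^2\bigr) \). I would prove the general statement directly by a parity-preserving Euclidean algorithm, as follows. Let \( A\in\Gamma(2) \). Its first column has odd first entry \( a_{11} \) and even other entries. Left multiplication by \( \T_{1j}^{2k} \) replaces \( a_{11} \) by \( a_{11}+2ka_{j1} \). Left multiplication by \( \T_{j1}^{2k} \) replaces \( a_{j1} \) by \( a_{j1}+2ka_{11} \).

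Because \( a_{11} \) is odd and the other entries are even, we can always reduce to
\[
  |a_{j1}|\le|a_{11}| \quad\text{and}\quad |a_{11}|\le|a_{j1}|.
\]
Strict decrease of \( \max\bigl(|a_{11}|,\max_j|a_{j1}|\bigr) \) occurs unless the extreme values tie. A tie cannot happen with nonzero entries, since an odd and an even integer never have equal absolute value. The column entries are coprime, so the process terminates with first column \( \pm e_1 \). We then clear the first row using column operations (right multiplication by \( \T_{1j}^{2k} \)), which is possible because those entries are even. Induction on \( \x \) ends at a diagonal sign matrix.

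\emph{Main obstacle.} The delicate step is this termination argument in the level-\( 2 \) case. One must check that division with even quotients always makes progress, which requires the parity bookkeeping to be done carefully. For \( \x=2 \) one can instead cite the standard presentation of \( \Gamma(2) \).

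Finally, \( \Gamma \) is generated by \( \Gamma(2) \) and a set of preimages of the generators of \( S_{\x} \). Taking these preimages in \( \Sigma \) concludes the proof.
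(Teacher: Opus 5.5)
Your proof is correct and follows the paper's argument: the same case split, the same extension \( 1\to\Gamma(2)\to\Gamma\to S_d\to 1 \) with \( \Sigma \) surjecting onto \( S_d \), and the same reduction to showing that the level-\( 2 \) congruence subgroup is generated by \( \T \) and the diagonal sign matrices. The paper only cites \cite{basuRealizingCongruenceSubgroups2016} for that last fact, whereas your parity-preserving Euclidean algorithm proves it directly, and it is valid because at level \( 2 \) parity rules out ties; one small fix is to run the induction on matrices congruent to the identity mod \( 2 \) with determinant \( \pm1 \), since after clearing the first row and column the remaining block may have determinant \( -1 \), which your algorithm handles unchanged.
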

\begin{proof}
  If \( n \) is 1,3 or 7, then \( \T \) is \( \SL_{\x}(\ZZ) \).
  Otherwise, \( \Gamma \) fits in a short exact sequence
  \[
    1
    \lra
    \SL_{\x}(\ZZ,2)
    \lra
    \Gamma
    \lra
    \Sigma_{\x}
    \lra
    1
  \]
  where \( \Sigma_{\x} \) is the subgroup of permutation matrices in \( \SL_{\x}(\ZZ/2\ZZ) \)
  (it is isomorphic to the symmetric group of degree \( \x \)),
  and the kernel is the principal congruence subgroup of level 2.
  The subgroup \( \Sigma \) surjects onto \( \Sigma_{\x} \).
  The group \( \SL_{\x}(\ZZ,2) \) is generated by \( \T \) and \( \SL_{\x}(\ZZ,2)\cap\Sigma \).
  Indeed, see~\cite{basuRealizingCongruenceSubgroups2016}.
\end{proof}

\paragraph{Transvections}
Out of the vector spaces involved in the definition of the retraction \( \Retr \), the only one that is not a priori equipped with an action of \( \T \) is
\[
  \EulerPontryaginRing.
\]
We define the action by taking symmetric powers of the following action on the graded vector space
\[
    \QQ\langle{e_{i},p_{1,i},\dotsc,p_{\lfloor\frac{n}{2}\rfloor,i}\mid i\in\xset}\rangle
\]
spanned by the Euler classes and Pontryagin classes.
Let \( \T \) act on \( \QQ\langle{e_{i}\mid i\in\xset}\rangle \) as the dual of the standard representation, and trivially on \( \QQ\langle{p_{1,i},\dotsc,p_{\lfloor\frac{n}{2}\rfloor,i}\mid i\in\xset}\rangle \).

\begin{proposition}\label{E equivariance of retraction}
  The map induced by \( \FibProd \)
  \[
    \H^{*}(\B\Tor(\xSn);\QQ)
    \lra
    \EulerPontryaginRing
  \]
  is \( \T \)-equivariant.
  In particular, the retraction \( \Retr \) is \( \T \)-equivariant.
\end{proposition}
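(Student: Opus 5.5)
We reduce to single generators of \( \T \) and realise each one by an explicit family of diffeomorphisms that extends over the image of \( \FibProd \).

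Fix a generator \( t=\T_{ij} \) (or \( \T_{ij}^2 \) if \( n\notin\{1,3,7\} \)) with \( i\ne j \). Its action on \( \H^*(\B\Tor(\xSn);\QQ) \) comes from conjugating \( \Tor(\xSn) \) by a diffeomorphism \( \tau \) realising \( t \). Take \( \tau \) to be a generalised Dehn twist in the \( i \)th and \( j \)th factors: \( (x_i,x_j)\mapsto (f(x_j)\cdot x_i, x_j) \), where \( f\colon\S^n\ra\SO(n+1) \) represents the appropriate class in \( \pi_n\SO(n+1) \), as in \cref{Dehn has monodromy P}. The aim is to show that \( \FibProd^*\circ c_\tau^* \) and \( t\cdot\FibProd^* \) agree. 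It suffices to find a map \( \Phi \) from \( \times_d\B\SO(n+1) \), or from a space over it with injective rational cohomology, to \( \B\Tor(\xSn) \) such that three things hold:
\begin{enumerate}
\item \( \Phi \) is homotopic to \( c_\tau\circ\FibProd \);
\item \( \Phi \) factors through a space whose rational cohomology we control, namely the generalised Dehn twist spaces \( \B(\DehnH) \) of \cref{sec:gener-dehn-twists}, placed in the \( (i,j) \) coordinates and crossed with \( \B\SO(n+1) \) in the remaining factors;
\item on cohomology, \( \Phi \) is computed by the dg Lie model of \cref{monodromy P action on Dehn space}.
\end{enumerate}

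Concretely, conjugating the product bundle \( \prod_k \gamma_k \) by \( \tau \) gives an \( \xSn \)-bundle. In the \( (i,j) \) coordinates it is the bundle of \cref{bundles and dehn twists}: an \( \S^n \)-bundle \( \zeta_j \), together with an \( \SO(n+1) \)-bundle over its total space whose restriction to each fibre is twisted by \( f \). This yields the factorisation through \( \B(\DehnH)\times\prod_{k\ne i,j}\B\SO(n+1) \). We then compute the induced map on rational cohomology via the models of \cref{model for Dehn spaces} and \cref{homotopical Dehn twist model}, using the description of the monodromy in \cref{monodromy P action on Dehn space}. That description says that \( \ppp \) acts on \( \ax\oplus\ay \), the span of the Euler directions \( \epsilon_x,\epsilon_y \), by the standard representation, and acts trivially on the complementary summand, which contains all the \( q_j \). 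Dually on cohomology, \( t \) acts on the Euler classes by the dual standard representation and fixes the Pontryagin classes. This is exactly the action of \( \T \) defined on \( \EulerPontryaginRing \). The second assertion then follows formally, since projecting onto \( \EulerRing \) commutes with this action.

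The main obstacle is comparing the two routes through \( \B(\DehnH) \) and \( \B\SO(n+1)^{2} \). The conjugation action on Torelli cohomology must be matched with the monodromy action of \( P \) computed by the models. For this we use \cref{conjugation=monodromy} applied to the cover \( \B\Diff^{P}\ra\B P \): the \( P \)-action on \( \H^*(\B\Tor;\QQ) \) is the monodromy of that fibration, and it is pulled back along \( \dehn \). We then need to check that the map \( \B(\map(\S^n,\SO(n+1))_0\times\SO(n+1))\ra\B\SO(n+1)^2 \) hits the product classes. In degree \( n+1 \), we expect the linear algebra of \cref{monodromy P action on Dehn space} to show that the Pontryagin classes are untouched. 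One remaining subtlety is that the \( q_j \) in the \( \Lambda(y)\otimes\sx \) part could in principle mix with \( y \) and produce terms like \( e_j p_{k,i} \). Because the action is trivial on the complement, such mixing does not occur, which is exactly what makes the Pontryagin classes \( \T \)-invariant.
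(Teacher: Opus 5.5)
Your argument is essentially the paper's proof. The paper factors \( \FibProd \) through the constant-map inclusion \( \B\SO(n+1)^{2}\to\B\Tor\dehn \) into the homotopy fibre of the generalised Dehn twist space over \( \B P \), and uses that \( \dehn \) is equivariant for the \( P \)-monodromies. It then reads off equivariance from \cref{monodromy P action on Dehn space}: there the constants \( \sx\oplus\sy \) span a \( \ppp \)-invariant subspace of the kernel, with the standard action on \( \epsilon_x,\epsilon_y \) and the trivial action on the \( q_j \). Your conjugation by an explicit twist \( \tau \) is this same monodromy made concrete via \cref{conjugation=monodromy}.

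Your last paragraph needs tidying. The map you need is that constant-map inclusion into the Dehn twist space, not a map out of it. Nothing has to be ``expected'': the triviality of the \( \ppp \)-action on the complement of \( \ax\oplus\ay \) is exactly what that proposition gives.
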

\begin{proof}
  It is enough to cover the case  \( \x=2 \).
  We prove \( P \)-equivariance, where \( P \) is the intersection of \( \T \) with the subgroup of upper triangular matrices in \( \SL_2(\ZZ) \).
  The case of lower triangular matrices is proven analogously.
  The map of topological groups
  \[
    \SO(n+1)
    \lra
    \Dehn
  \]
  that sends a map to the constant map at it yields a map
  \[
    \widetilde{\FibProd}
    \cl
    \B\SO(n+1)^{\times 2}
    \lra
    \B\Tor\dehn
  \]
  to the homotopy fibre \( \B \Tor\dehn \) of
  \[
    \B(\DehnH)
    \lra
    \B P.
  \]
  The map \( \dehn \) induces a map
  \begin{equation}
    \label{eq:BTorDP to BTor}
    \dehn\cl
    \B\Tor \dehn
    \lra
    \B\Tor(\SnSn)
  \end{equation}
  which is equivariant with respect to the monodromy actions of \( P \).
  This provides a factorisation of \( \FibProd \) as the composite of \( \widetilde{\FibProd} \) and \eqref{eq:BTorDP to BTor}.
  It remains to check that \( \widetilde{\FibProd} \) induces a \( P \)-equivariant map on rational cohomology.
  It holds at the level of dg Lie models by \cref{monodromy P action on Dehn space}.
\end{proof}

\paragraph{Signed permutation matrices}
Again, out of the vector spaces involved in the definition of the retraction \( \Retr \),
\[
  \EulerPontryaginRing
\]
remains to be equipped with an action of \( \Sigma \).
In this case, an action can be defined on the space level, or equivalently, on the topological group \( \times_{\x}\SO(n+1) \).
This can be done as follows.

Let \( \mathrm{r}\cl \S^n\ra\S^n \) be a reflection, say,
\[
  \mathrm{r}(x_0,x_{1},\dotsc,x_{n})=(-x_0,x_1,\dotsc,x_n)
\]
in \( \RR^{n+1} \).
The group \( \mathrm{O}_d(\ZZ) \) is isomorphic to the group of bijections \( \sigma \) of \( \{\pm1,\dotsc, \pm n\} \) such \( \sigma(-i)=-\sigma(i) \).
The orthogonal matrix \( A_{\sigma} \) corresponding to \( \sigma \) has \( (|\sigma(i)|,i) \)-entry the number \( (-1)^{s(i)} \) where
\[
  s(i)
  =
  \left\{
    \begin{array}{ll}
      1&\text{if }\sigma(i)<0
      \\
      0&\text{if }\sigma(i)>0.
    \end{array}
  \right.
\]
Let \( \sigma\in \Sigma \) act on the topological group \( \times_{\x}\SO(n+1) \) by
\[
  (f_1.\dotsc,f_{\x})
  \longmapsto
  (\mathrm{c}_{\mathrm{r}}^{s(1)}f_{|\sigma(1)|},\dotsc, \mathrm{c}_{\mathrm{r}}^{s(\x)}f_{|\sigma(\x)|}),
\]
where \( \mathrm{c}_{\mathrm{r}} \) denotes the conjugation by \( \mathrm{r} \) in \( \SO(n+1) \).

\begin{remark}\label{rmk:Sigma on BSO}
  The effect of conjugating by \( \mathrm{r} \) on the rational cohomology of \( \B\SO(n+1) \) is trivial on the Pontryagin classes.
  Indeed, the Pontryagin classes are defined in the rational cohomology of \( \B\operatorname{O}(n+1) \), on which conjugation by \( \mathrm{r} \) acts trivially (see the proof of \cref{conjugation=monodromy}).
  Conjugation by \( \mathrm{r} \) acts as \( -1 \) on the Euler class, because we have the commutative diagram
  \[
    \begin{tikzcd}
      \SO(n+1)
      \ar[r,"\mathrm{c}_{\mathrm{r}}"]
      \ar[d]
      &\SO(n+1)
      \ar[d]
      \\
      \S^n
      \ar[r,"\mathrm{r}"]
      &\S^n,
    \end{tikzcd}
  \]
  where the vertical map is given by evaluation at a point in the fixed points of \( \mathrm{r} \), say, \( (0,\dotsc,0,1) \).
  Finally, observe that \( \mathrm{r} \) has degree \( -1 \).

  Therefore, the \( \Sigma \) representation \( \EulerPontryaginRing \) is obtained as the symmetric powers of the \( \Sigma \)-representation
  \[
    \QQ\langle{e_{i}\mid i\in\xset}\rangle
    \oplus
    \QQ\langle{p_{1,i}\mid i\in\xset}\rangle
    \oplus
    \dotsb
    \oplus
    \QQ\langle{p_{\lfloor\frac{n}{2}\rfloor,i}\mid i\in\xset}\rangle
  \]
  that is a direct sum of the following \( d \)-dimensional \( \Sigma \)-representations.
  The action of \( \sigma\in \Sigma \) on \( \QQ\langle{e_i\mid i\in\xset}\rangle \) is given by
  \[
    \H^{*}(\sigma^{-1})(e_{j})=(-1)^{s(j)}e_{|\sigma(j)|},
  \]
  with matrix \( A_{\sigma} \), defined above.
  For each \( j \), \( \Sigma \) acts on \( \QQ\langle{p_{j,i}\mid i\in\xset}\rangle \) through \( \Sigma\ra\Sigma_{\x} \) as the permutation representation of the symmetric group \( \Sigma_d \).
\end{remark}

\begin{lemma}\label{Sigma equivariance of retraction}
  The map induced by \( \FibProd \)
  \[
    \H^{*}(\B\Tor(\xSn);\QQ)
    \lra
    \EulerPontryaginRing
  \]
  is \( \Sigma \)-equivariant.
  In particular, the retraction \( \Retr \) is \( \Sigma \)-equivariant.
\end{lemma}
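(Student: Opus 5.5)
The plan is to realise the action of \( \Sigma \) on the space level, so that the map \( \FibProd \) becomes equivariant up to homotopy. For \( \sigma\in\Sigma \), write \( \mathrm{R}_\sigma\cl\xSn\ra\xSn \) for the diffeomorphism
\[
  (z_1,\dotsc,z_\x)\longmapsto \bigl(\mathrm{r}^{s(1)}(z_{|\sigma(1)|}),\dotsc,\mathrm{r}^{s(\x)}(z_{|\sigma(\x)|})\bigr),
\]
up to the convention of \( \sigma \) versus \( \sigma^{-1} \), which I would fix at the end by comparing with the formula in \cref{rmk:Sigma on BSO}. This is a composite of a permutation of factors and reflections in some factors. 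Since \( n \) is odd, permuting factors preserves orientation. The determinant of \( A_\sigma \) is \( 1 \), so an even number of reflections occurs, and hence \( \mathrm{R}_\sigma \) is orientation-preserving. By construction it induces \( A_\sigma \) on \( \H_n \).

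The first step is to note that conjugation by \( \mathrm{R}_\sigma \) preserves the subgroup \( \Tor(\xSn) \), since that subgroup is normal in \( \Diff^+(\xSn) \). Because \( \mathrm{R}_\sigma \) lies in \( \Diff^+(\xSn) \), \cref{conjugation=monodromy} shows that the monodromy action of \( \sigma \) on \( \B\Tor(\xSn) \) is modelled by \( \B\mathrm{c}_{\mathrm{R}_\sigma} \). More precisely, I would apply it to the extension
\[
  \Tor \lra \Diff^{\Sigma}(\xSn) \lra \Sigma,
\]
which is split by \( \sigma\mapsto \mathrm{R}_\sigma \). The map \( \sigma\mapsto\mathrm{R}_\sigma \) is a homomorphism because reflections and permutations compose as signed permutations, and \( \mathrm{r}^2=\id \). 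Consequently the \( \Gamma \)-action on \( \H^*(\B\Tor(\xSn);\QQ) \), restricted to \( \Sigma \), is induced by the maps \( \B\mathrm{c}_{\mathrm{R}_\sigma} \).

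The second step is a direct check that, for \( (f_1,\dotsc,f_\x)\in\times_\x\SO(n+1) \),
\[
  \mathrm{R}_\sigma\circ(f_1\times\dotsb\times f_\x)\circ\mathrm{R}_\sigma^{-1}
\]
equals the product map of the tuple obtained from the action of \( \sigma \) on \( \times_\x\SO(n+1) \), namely factors permuted and \( \mathrm{c}_{\mathrm{r}} \) applied where \( s(i)=1 \). Thus \( \FibProd \) is strictly equivariant at the level of topological groups, and hence \( \B\FibProd \) intertwines the two actions on classifying spaces. Taking rational cohomology, the map induced by \( \FibProd \) is \( \Sigma \)-equivariant, where \( \Sigma \) acts on \( \EulerPontryaginRing \) as computed in \cref{rmk:Sigma on BSO}.

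It then remains to check that the \( \Sigma \)-action on \( \EulerRing\subset\EulerPontryaginRing \) obtained this way agrees with the dual-of-standard action used to define the \( \Gamma \)-structure. By \cref{rmk:Sigma on BSO}, \( e_j \) goes to \( (-1)^{s(j)}e_{|\sigma(j)|} \), with matrix \( A_\sigma \); for orthogonal matrices the dual standard representation has matrix \( (A^{-1})^{T}=A \), so the two agree. The Pontryagin part is a sub-representation, and the projection onto \( \EulerRing \) is equivariant, since the action preserves the polynomial degree in the Pontryagin classes. Hence \( \Retr \) is \( \Sigma \)-equivariant.

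The main obstacle I expect is bookkeeping of conventions. One issue is the choice between left and right actions (\( \sigma \) versus \( \sigma^{-1} \), and \( \H^*(\sigma^{-1}) \)). The other is confirming that the action of \( \Gamma \) on \( \H^*(\B\htor) \), as given by \cref{homotopy Torelli computation}, is the monodromy (conjugation) action, so that restricting to \( \mathrm{R}_\sigma \) is legitimate. I would settle these by testing on a single transposition with one sign, for \( \x=2 \).
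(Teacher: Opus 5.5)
Your proposal follows the paper's proof essentially step by step: the section built from factor permutations and the reflection \( \mathrm{r} \); \cref{conjugation=monodromy} to identify the monodromy on \( \B\Tor(\xSn) \) with conjugation; strict equivariance of \( \FibProd \) at the level of topological groups; and \cref{rmk:Sigma on BSO} for the equivariance of the inclusion and projection.

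One justification is wrong, however. For \( n \) odd, swapping two sphere factors has degree \( (-1)^{n^2}=-1 \), not \( +1 \). Also, \( \det A_\sigma=1 \) does not force an even number of reflections: \( \begin{psmallmatrix}0&1\\-1&0\end{psmallmatrix}\in\Sigma \) involves exactly one. The two errors cancel, so your conclusion is right. The correct argument is this: \( \H^*(\xSn;\QQ) \) is an exterior algebra on the degree-\( n \) classes, so the degree of any self-map equals the determinant of its action on \( \H_n \). Hence \( \deg \mathrm{R}_\sigma=\det A_\sigma^{\pm 1}=1 \).

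Your convention worry is also justified. With the formula as written, \( \mathrm{R}_\sigma\mathrm{R}_\tau=\mathrm{R}_{\tau\sigma} \) and \( \mathrm{R}_\sigma \) induces \( A_\sigma^{-1} \) on \( \H_n \). So the actual section is \( \sigma\mapsto\mathrm{R}_{\sigma^{-1}} \), and the paper glosses over this as well. It is harmless for the equivariance of \( \FibProd \), because both \( \times_{\x}\SO(n+1) \) and \( \Tor(\xSn) \) are acted on by conjugation with the same diffeomorphisms.
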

\begin{proof}
  Suppose that the map \( \Diff^+(\xSn) \ra \Gamma \) that applies degree \( n \) homology admits a section on \( \Sigma \).
  Then \( \Sigma \) acts on \( \Tor(\xSn) \) by conjugation.
  The induced action on the classifying space agrees up to homotopy with the monodromy action, by \cref{conjugation=monodromy}.

  We define such a section by assigning to a signed permutation \( \sigma \) the self-diffeomorphism of \( \xSn \) given by
  \[
    (x_1,\dotsc,x_{d})
    \longmapsto
    (\mathrm{r}^{s(1)}x_{|\sigma(1)|},\dotsc,\mathrm{r}^{s(d)}x_{|\sigma(d)|}).
  \]
  Then the map
  \[
    \FibProd\cl\times_{d}\SO(n+1)\ra \Tor(\xSn)
  \]
  is \( \Sigma \)-equivariant.
  To conclude, we extract from \cref{rmk:Sigma on BSO} that the inclusion and the projection,
  \[
    \begin{tikzcd}
      \EulerRing
      \ar[r,shift left]
      &
      \EulerPontryaginRing,
      \ar[l,shift left]
    \end{tikzcd}
  \]
  are equivariant.
\end{proof}

\begin{proof}[Proof of \cref{Torelli equivariant splitting}]
  The retraction \( \Retr \) is \( \T \)-equivariant by \cref{E equivariance of retraction}, and it is \( \Sigma \)-equivariant by \cref{Sigma equivariance of retraction}.
  We deduce \( \Gamma \)-equivariance because the group \( \Gamma \) is generated by its subgroups \( \T \) and \( \Sigma \) by \cref{transvections and signed permutations generate}.
\end{proof}

We need the following variant of \cref{injection of fibration classes} in \cref{sec:decomp-bund}.
(See \cref{sec:class-fibr-1} for notation.)

\begin{theorem}\label{injection of characteristic classes; monodromy in G}
  Let \( K\le\Gamma \) be a subgroup.
  Then the map induced by
  \begin{equation}
    \B\Diff^{K}(\SnSn)
    \lra
    \B\aut^{K}(\SnSn)
  \end{equation}
  on rational cohomology is injective.
\end{theorem}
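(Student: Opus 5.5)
We would mimic the proof of \cref{injection of fibration classes}, replacing \( \Gamma \) by \( K \) throughout. First, by the definitions in \cref{sec:class-fibr-1}, both \( \B\Diff^{K}(\SnSn) \) and \( \B\aut^{K}(\SnSn) \) are the homotopy pullbacks of \( \B\Diff^{+}(\SnSn) \) and \( \B\aut^{+}(\SnSn) \) along \( \B K\ra\B\Gamma \). Hence they map to \( \B K \) with homotopy fibres \( \B\Tor(\SnSn) \) and \( \B\htor(\SnSn) \), respectively. Here we use \cref{realisation of automorphisms in Hn}: the group realised by diffeomorphisms is \( \Gamma \), so the fibre over \( \B K \) is the full Torelli space. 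The map \( \B\Diff^{K}(\SnSn)\ra\B\aut^{K}(\SnSn) \) lies over \( \B K \), so it induces a map of Serre spectral sequences with rational coefficients whose second pages are
\[
  \H^{*}(K;\H^{*}(\B\htor(\SnSn);\QQ))\lra \H^{*}(K;\H^{*}(\B\Tor(\SnSn);\QQ)),
\]
induced by the map of \( K \)-representations \( \EulerMap \). This uses \cref{homotopy Torelli computation}, with the monodromy actions restricted from \( \Gamma \) to \( K \).

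Next, by \cref{Torelli equivariant splitting}, \( \EulerMap \) admits a \( \Gamma \)-equivariant retraction \( \Retr \). Restricting to \( K \), \( \Retr \) is \( K \)-equivariant, so \( \EulerMap \) is split injective as a map of \( K \)-representations. Since group cohomology is an additive functor of the coefficient module, the induced map on second pages is split injective.

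Finally, we need collapse. Since \( K\le\Gamma \), every torsion-free finite-index subgroup of \( \Gamma \) meets \( K \) in a torsion-free subgroup of finite index in \( K \) and of cohomological dimension at most \( 1 \). Hence \( \H^{*}(K;V)=0 \) for \( *>1 \) for every \( \QQ[K] \)-module \( V \), by the same argument as in \cref{vcd of Gamma}, that is, \cite[Proposition 10.1]{Brown1994groupCohomology}. So both spectral sequences are concentrated in the columns \( 0 \) and \( 1 \), and they collapse at the second page. Because the map of second pages is injective, the induced map on the associated graded of the abutments is injective.

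Injectivity on associated graded objects of finite (two-step) filtrations implies injectivity on the abutments, by a five-lemma argument applied to the short exact sequence \( 0\ra F^{1}\ra\H^{*}\ra E^{0,*}_\infty\ra 0 \). This gives the claim.

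The only delicate point, which I expect to be the main obstacle, is checking that the identification of the fibres and their monodromy is compatible with restriction to \( K \). That is, the homotopy fibre of \( \B\Diff^{K}(\SnSn)\ra\B K \) must be \( \B\Tor(\SnSn) \) with the monodromy restricted from \( \Gamma \). This should follow from the pullback square defining \( \B\Diff^{K} \), together with the surjectivity \( \pi_0\Diff^{+}(\SnSn)\ra\Gamma \) given by \cref{realisation of automorphisms in Hn}.
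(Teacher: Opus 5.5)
Your proposal is correct and follows the paper's proof. The paper likewise notes that the virtual cohomological dimension of \( K \) is at most \( 1 \), so the Serre spectral sequences over \( \B K \) collapse, and then reruns the argument for \( \Gamma \) with the \( \Gamma \)-equivariant retraction restricted to \( K \). You spell out details the paper leaves implicit: the identification of the fibre as the Torelli space via surjectivity onto \( \Gamma \), the restriction of the splitting, and the two-step filtration argument.
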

\begin{proof}
  Since the virtual cohomological dimension of \( \Gamma \) is 1, that of \( K \) is at most 1.
  Therefore, the cohomological Serre spectral sequence of \( \B\aut^{K}(\SnSn)\ra\B K \) with rational coefficients collapses, because there is no room for non-trivial differentials.
  To finish, one can argue as in the proof of \cref{injection of fibration classes}.
\end{proof}

The following result is not used in this article, but it might be of interest.
\begin{proposition}
  Let \( n \) be \( 1 \), \( 3 \), or \( 7 \).
  \begin{equation}
    \label{eq:BTor to Euler and Pontryagin}
    \H^{*}(\B\Tor(\xSn);\QQ)
    \lra
    \PontryaginRing
  \end{equation}
  induced by \( \FibProd \) followed by the projection has image in the \( \Sigma \)-invariants.
\end{proposition}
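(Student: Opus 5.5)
The plan is to compare the two equivariance statements already established, \cref{E equivariance of retraction} and \cref{Sigma equivariance of retraction}, on the same group elements. The point is that for \( n\in\{1,3,7\} \) the group \( \Sigma \) sits inside \( \T \), yet \( \T \) and \( \Sigma \) act differently on the Pontryagin classes. (If \( n=1 \) there are no Pontryagin classes and the target of \eqref{eq:BTor to Euler and Pontryagin} is \( \QQ \) in degree zero, so the statement is trivial; the argument below covers all three cases uniformly anyway.)

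First, I record that \( \T=\SL_{\x}(\ZZ)=\Gamma \) when \( n\in\{1,3,7\} \), as noted in the proof of \cref{transvections and signed permutations generate}; hence \( \Sigma\le\T \). Next, I check that the projection
\[
  \EulerPontryaginRing\lra\PontryaginRing
\]
is equivariant for both actions. It is the quotient by the ideal generated by the Euler classes, and this ideal is stable under both actions: under \( \T \) the span of the \( e_i \) is preserved (dual standard representation), and under \( \Sigma \) it is preserved as well (the matrices \( A_\sigma \) of \cref{rmk:Sigma on BSO}). On the quotient, \( \T \) acts trivially, by definition of the \( \T \)-action on the Pontryagin classes. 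The group \( \Sigma \) acts by permuting the index \( i \) through \( \Sigma\ra\Sigma_{\x} \), by \cref{rmk:Sigma on BSO}.

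Denote by \( \pi \) the composite \eqref{eq:BTor to Euler and Pontryagin}. Let \( c\in\H^{*}(\B\Tor(\xSn);\QQ) \) and \( \sigma\in\Sigma \), where \( \Sigma \) acts on the source through the monodromy action of \( \Gamma \). Regarding \( \sigma \) as an element of \( \T \), \cref{E equivariance of retraction} together with the triviality of the \( \T \)-action on the quotient gives \( \pi(\sigma\cdot c)=\pi(c) \). Regarding \( \sigma \) as an element of \( \Sigma \), \cref{Sigma equivariance of retraction} gives \( \pi(\sigma\cdot c)=\sigma\cdot\pi(c) \), with the permutation action on the right. Combining the two, \( \sigma\cdot\pi(c)=\pi(c) \) for all \( \sigma \in \Sigma \), so the image of \( \pi \) lies in the \( \Sigma \)-invariants.

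The main point to verify carefully is that the two equivariance statements refer to the same action on the source. In both cases this is the monodromy action of \( \Gamma \) on \( \H^{*}(\B\Tor(\xSn);\QQ) \), restricted to \( \T \) and to \( \Sigma \) respectively. In the proof of \cref{Sigma equivariance of retraction} this action is realised by conjugation, and \cref{conjugation=monodromy} identifies conjugation with the monodromy action. The proof of \cref{E equivariance of retraction} likewise uses the monodromy action of \( P \). So only the actions on the target differ, and that difference is exactly what forces the invariance.
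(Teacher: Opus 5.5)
Your proposal is correct and follows the paper's proof: for \( n\in\{1,3,7\} \) you use \( \Sigma\le\T=\Gamma \), and you combine the \( \Sigma \)-equivariance of \cref{Sigma equivariance of retraction} (where \( \Sigma \) permutes the Pontryagin classes) with the \( \T \)-equivariance of \cref{E equivariance of retraction} (where \( \T \) acts trivially on them). You also spell out two points the paper leaves implicit: that the projection onto the Pontryagin quotient is equivariant for both actions, and that both lemmas concern the same monodromy action on the source.
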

\begin{proof}
  We show in \cref{Sigma equivariance of retraction} that \eqref{eq:BTor to Euler and Pontryagin} is \( \Sigma \)-equivariant.
  The assumption on \( n \) implies that \( \T \) equals \( \Gamma \); in particular, \( \T \) contains \( \Sigma \).
  Then it follows from \cref{E equivariance of retraction} that \eqref{eq:BTor to Euler and Pontryagin} is also equivariant for the trivial \( \T \)-action on the target.
\end{proof}

\paragraph{Homotopy spheres}
A homotopy \( n \)-sphere is a \( n \)-dimensional smooth manifold that is homotopy equivalent to the \( n \)-sphere \( \S^n \).
Let \( X_{\x} \) be a product of homotopy \( n \)-spheres.
We note that \( \Tor(X_{\x})\ra \htor(X_{\x}) \) is injective on rational cohomology.
This holds because \( \htor(X_{\x}) \) is rationally equivalent to \( \times_{\x}\aut_{\id}(\S^n) \) by \cref{homotopy Torelli computation}, and \( \Diff_{\id}(\Sigma^n) \) is rationally equivalent to \( \Diff_{\id}(\S^n) \) over \( \aut_{\id}(\S^n) \) by Dwyer--Szczarba~\cite{dwyerHomotopyTypeDiffeomorphism1983}.

One can ask whether
\begin{equation}
  \EulerRing
  \lra
  \H^{*}(\B\Tor(X_{\x});\QQ)\label{eq:Euler Ring for homotopy spheres}
\end{equation}
is split injective as a map of \( \Gamma' \)-representations, for some finite-index subgroup \( \Gamma'\le\Gamma \).
\cref{sec:gener-dehn-twists} could be used if one had control of the maps induced on the homology of a product of homotopy \( n \)-spheres \( \Sigma^n_1\times\Sigma^n_2 \) by diffeomorphisms coming from
\[
  \mathrm{C}^{\infty}(\Sigma^n_{1},\Diff_{\id}(\Sigma^n_{2}))\rtimes\Diff_{\id}(\Sigma^{n}_{1}).
\]
We understand the case of standard  spheres, by \cref{Dehn has monodromy P}, but a complete understanding for exotic spheres is out of reach; see \cite{OliverSommerPhD}.
However, the ideas of \cite{basuRealizingCongruenceSubgroups2016} leading up to Theorem 3.8 in loc.\ cit.\ could potentially be adapted to gain enough information, under the assumption that \( \x\ge3 \).

Since we know that \eqref{eq:Euler Ring for homotopy spheres} is injective, there is an alternative approach which one can attempt to prove that it splits equivariantly.
By using \cite{basuRealizingCongruenceSubgroups2016} and \cite[Corollary 16.4]{BassMilnorSerre1967}, one would be able to conclude that \eqref{eq:Euler Ring for homotopy spheres} splits equivariantly for \( \x\ge3 \) if the target of \eqref{eq:Euler Ring for homotopy spheres} had degreewise finite dimension.
In turn, the latter could be proven as in \cite[Corollary 5.5]{Kupers2019finiteness} if one knew that the classifying space of the Torelli group \( \pi_0\Tor(X_{\x}) \) is homologically of finite type.
Note, however, that \cite{BassMilnorSerre1967} cannot be used if \( \x \) is 2.

\section{Decomposable bundles}\label{sec:decomp-bund}

Consider the set of smooth oriented \( \SnSn \)-bundles \( \{\Theta_{\gamma}\mid\gamma\in\Gamma\} \) where
\[
  \Theta_{\gamma}\cl
  E_{\gamma}
  \lra
  B_{\gamma}
\]
is the universal smooth oriented \( \SnSn \)-bundle over the cover \( B_{\gamma} \) corresponding to the subgroup of mapping classes that induce an element in \( \langle{\gamma}\rangle \) on \( \H_n(\SnSn;\ZZ) \).
That is, \( \Theta_{\gamma} \) is classified by
\[
  \B\Diff^{\langle{\gamma}\rangle}(\SnSn)
  \lra
  \B\Diff^+(\SnSn).
\]
(See \cref{sec:class-fibr-1} for notation.)
In this section, we prove that this family of bundles detects any given characteristic class of oriented \( \SnSn \)-fibrations by using their relation to the decomposable homology classes of \cref{decomposables generation} by Goldman--Millson \cite{GoldmanMillson1986decomposable}.
If \( \gamma \) is parabolic, we can tell which characteristic classes are detected by \( \Theta_{\gamma} \).
But we pursue this in \cref{sec:E2k}, where we also explain the relation to \autocite[Example 5.3]{Morita1987t2bundles}.

\begin{theorem}\label{pairing decomposable fibrations}
  The set of smooth oriented \( \SnSn \)-bundles \( \{\Theta_{\gamma}\mid \gamma\in \Gamma\} \) satisfies the following properties.
\begin{enumerate}
    \item\label{item:decomposable detection} For each non-zero rational characteristic class \( c \) in \( \H^{*}(\B\aut^+(\SnSn);\QQ) \) there exists an element \( \gamma \) in \( \Gamma \) such that \( c(\Theta_{\gamma}) \) is non-zero.
   \item\label{item:ev for parabolic}
         If \( \gamma \) is parabolic, then there exists a rational characteristic class \( c \) such that \( c(\Theta_{\gamma}) \) is non-zero.
\end{enumerate}
\end{theorem}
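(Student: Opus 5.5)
Part (1) reduces to a statement about fibrations. For each \( \gamma \) there is a commutative square
\[
  \begin{tikzcd}
    \B\Diff^{\langle{\gamma}\rangle}(\SnSn)\ar[r]\ar[d] & \B\Diff^{+}(\SnSn)\ar[d]\\
    \B\aut^{\langle{\gamma}\rangle}(\SnSn)\ar[r] & \B\aut^{+}(\SnSn).
  \end{tikzcd}
\]
By \cref{injection of characteristic classes; monodromy in G} with \( K=\langle{\gamma}\rangle \), the left vertical map is injective on rational cohomology. So \( c(\Theta_{\gamma})\neq0 \) as soon as \( c \) restricts non-trivially to \( \B\aut^{\langle{\gamma}\rangle}(\SnSn) \).

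Reduced classes live in odd degrees (\cref{vcd collapse}). Hence a non-zero \( c \) of positive degree corresponds under \cref{SnSn characteristic classes} to a class in \( \H^1(\Gamma;\QQ[\ex,\ey]_{2m}) \) for some \( m\ge1 \); here only even symmetric powers matter, since \( -1\in\Gamma \) kills odd ones. Since the Serre spectral sequence is natural and collapses for both \( \Gamma \) and \( \langle{\gamma}\rangle \), the restriction of \( c \) corresponds to \( \operatorname{res}^{\Gamma}_{\langle{\gamma}\rangle} \) on \( \H^1(-;\QQ[\ex,\ey]_{2m}) \). The degree-zero class \( c=1 \) is detected by every \( \Theta_\gamma \) trivially.

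It remains to find \( \gamma \) with \( \operatorname{res}^{\Gamma}_{\langle{\gamma}\rangle}c\neq0 \), and this is where \cref{decomposables generation} enters. First I identify \( \QQ[\ex,\ey]_{2m} \) with the dual of \( \QQ[\epx,\epy]_{2m} \); this is standard, and the dual of \( \operatorname{Sym}^{2m} \) of the standard representation is \( \operatorname{Sym}^{2m} \) of the dual. The Kronecker pairing \( \H^1(\Gamma;V^\vee)\otimes\H_1(\Gamma;V)\to\QQ \) is then perfect over \( \QQ \) in degree 1, because the groups are finite dimensional and \( \QQ[\Gamma] \)-modules behave well under duality rationally. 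So \( c\neq0 \) pairs non-trivially with some element of \( \H_1(\Gamma;\QQ[\epx,\epy]_{2m}) \). By Goldman--Millson, that element is a combination of decomposables \( \operatorname{cor}^{\Gamma}_{\langle{\gamma}\rangle}(s_\gamma^m\cap\gamma) \), so \( c \) pairs non-trivially with one of them. The projection formula
\[
  \langle c,\operatorname{cor}(s^m_\gamma\cap\gamma)\rangle=\langle\operatorname{res}c,\,s_\gamma^m\cap\gamma\rangle
\]
then forces \( \operatorname{res}^{\Gamma}_{\langle{\gamma}\rangle}c\neq0 \). The main obstacle is making this duality and projection formula step precise and checking that the duality identification is compatible with the action conventions; I would verify it on the cochain level with the bar resolution.

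For part (2), take \( \gamma \) parabolic. It lies in some \( \Gamma_p^+ \), and \( \langle{\gamma}\rangle \) has finite index in \( \Gamma_p^+ \), which is infinite cyclic. By \cref{parabolic ses}, for each \( m \) there is \( c\in\H^1(\Gamma;\QQ[\ex,\ey]_{2m}) \) whose restriction to \( \Gamma_p \) is non-zero. Restriction from \( \Gamma_p \) to the finite-index subgroup \( \langle{\gamma}\rangle \) is injective on rational cohomology (transfer), so \( \operatorname{res}^{\Gamma}_{\langle{\gamma}\rangle}c\neq0 \). The injectivity argument of part (1) then gives \( c(\Theta_\gamma)\neq0 \).
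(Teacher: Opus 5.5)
Your proposal follows the paper's proof essentially step for step. It makes the same reduction to fibrations via \cref{injection of characteristic classes; monodromy in G}, uses the same perfect-pairing, Goldman--Millson and cor/res naturality argument for Part~1, and uses \cref{parabolic ses} for Part~2, where your transfer argument from \( \Gamma_p \) down to the finite-index subgroup \( \langle\gamma\rangle \) spells out a step the paper leaves implicit.

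One small correction: when \( n\notin\{1,3,7\} \), the theta group has \( \H^1(\Gamma;\QQ)\cong\QQ \), so there is a degree-one class and a positive-degree \( c \) can have \( m=0 \), a case \cref{decomposables generation} does not cover as stated. This case is trivial, because \( \H_1(\Gamma;\QQ) \) is spanned by the classes \( \operatorname{cor}^{\Gamma}_{\langle\gamma\rangle}(1\cap\gamma) \) of group elements, so your argument goes through unchanged.
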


\begin{proof}
  The classifying map for the underlying fibration of \( \Theta_{\gamma} \) factors as the composite in the commutative square
  \[
    \begin{tikzcd}
      \B\Diff^{\langle{\gamma}\rangle}(\SnSn)
      \ar[r]
      \ar[d]
      &
      \B\Diff^+(\SnSn)
      \ar[d]
      \\
      \B\aut^{\langle{\gamma}\rangle}(\SnSn)
      \ar[r,"\theta_{\gamma}"']
      &\B\aut^+(\SnSn).
    \end{tikzcd}
  \]
  Therefore, by \cref{injection of characteristic classes; monodromy in G}, it is enough to prove the analogous statement for the fibrations classified by \( \theta_{\gamma} \).

  By \cref{SnSn characteristic classes}, the Serre spectral sequence identifies the maps \( \H^{*}(\theta_{\gamma};\QQ) \) and \( \H_{*}(\theta_{\gamma};\QQ) \) with the restriction \( \operatorname{res}^{\Gamma}_{\langle{\gamma}\rangle} \) and the corestriction \( \operatorname{cor}^{\Gamma}_{\langle{\gamma}\rangle} \)
  \begin{gather*}
    \operatorname{cor}^{\Gamma}_{\langle{\gamma}\rangle}\cl
    \H_1(\langle{\gamma}\rangle; V)
    \lra
    \H_1(\Gamma;V)
    \\
    \operatorname{res}^{\Gamma}_{\langle{\gamma}\rangle}\cl
    \H^1(\Gamma;W)
    \lra
    \H^1(\langle{\gamma}\rangle;W)
  \end{gather*}
  respectively, where \( V=\QQ[\epx,\epy] \) and \( W=\QQ[\ex,\ey] \).
  Together with \cref{parabolic ses}, this shows Part~\ref{item:ev for parabolic}.

  To prove Part~\ref{item:decomposable detection}, let \( c\in\H^1(\Gamma;W) \) be a non-zero class. The standard pairing
  \[
    \H^1(\Gamma;W )\otimes \H_1(\Gamma;V)\lra \QQ[0]
  \]
  is perfect, so there exists a homology class for which its pairing with \( c \) is non-zero.
  As this homology class can be written as a sum of decomposable classes as in \cref{decomposables generation}, there exists one such class \( \operatorname{cor}^{\Gamma}_{\langle{\gamma}\rangle}(s^m_{\gamma}\cap\gamma) \) which pairs non-trivially with \( c \).
  By naturality of the pairing we conclude that \( \H^{*}(\theta_{\gamma};\QQ)(c) \) is non-zero.
\end{proof}

We note that \cref{pairing decomposable fibrations} implies \cref{intro:injection of characteristic classes}.
This provides a proof of \cref{intro:injection of characteristic classes} that is different from the one given in \cref{sec:injection of characteristic classes}.
The main difference is that here we use \cref{decomposables generation} by Goldman--Millson~\cite{GoldmanMillson1986decomposable}, instead of the result \cref{vcd of Gamma} about the virtual cohomological dimension of \( \Gamma \).

\begin{corollary}
    The map induced by
          \begin{equation*}
            \B\Diff^{+}(\SnSn)
            \lra
            \B\aut^{+}(\SnSn)
          \end{equation*}
          on rational cohomology is injective.
\end{corollary}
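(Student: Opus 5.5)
The plan is to deduce the corollary formally from Part~\ref{item:decomposable detection} of \cref{pairing decomposable fibrations}, without going through \cref{Torelli equivariant splitting}. Let \( c\in\H^{*}(\B\aut^+(\SnSn);\QQ) \) be a non-zero class. By Part~\ref{item:decomposable detection} there is some \( \gamma\in\Gamma \) with \( c(\Theta_{\gamma})\neq 0 \). By definition of a characteristic class and of \( \Theta_{\gamma} \), the class \( c(\Theta_{\gamma}) \) is the pullback of \( c \) along the composite
\[
  \B\Diff^{\langle{\gamma}\rangle}(\SnSn)
  \lra
  \B\Diff^{+}(\SnSn)
  \lra
  \B\aut^{+}(\SnSn),
\]
where the first map classifies \( \Theta_{\gamma} \) as a smooth bundle and the second map forgets to the underlying oriented fibration.

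Next, I factor this pullback through \( \H^{*}(\B\Diff^{+}(\SnSn);\QQ) \). Concretely, \( c(\Theta_{\gamma}) \) is obtained by first applying the map of the corollary to \( c \), and then restricting along the covering \( \B\Diff^{\langle{\gamma}\rangle}(\SnSn)\ra\B\Diff^{+}(\SnSn) \). If the image of \( c \) in \( \H^{*}(\B\Diff^+(\SnSn);\QQ) \) vanished, then \( c(\Theta_{\gamma}) \) would vanish too. That contradicts the choice of \( \gamma \), so the kernel of the map on rational cohomology is zero, which is the claimed injectivity.

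The only point needing care is that Part~\ref{item:decomposable detection} is really a statement about the bundles \( \Theta_{\gamma} \), and not only about the fibrations classified by \( \theta_{\gamma} \). In the proof of \cref{pairing decomposable fibrations} the passage from \( \theta_{\gamma} \) to \( \Theta_{\gamma} \) used \cref{injection of characteristic classes; monodromy in G}, which itself relied on \cref{Torelli equivariant splitting}. The corollary is meant to highlight the Goldman--Millson input, and the deduction above is formally valid given the theorem as stated, so I would simply invoke Part~\ref{item:decomposable detection} and accept this dependence. I do not expect any real obstacle beyond stating the commutative square of classifying maps and noting naturality of the pullback.
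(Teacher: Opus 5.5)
Your argument is correct and matches the paper's. The paper likewise deduces the corollary from Part~\ref{item:decomposable detection} of \cref{pairing decomposable fibrations}, using the naturality observation that \( c(\Theta_{\gamma}) \) factors through \( \H^{*}(\B\Diff^{+}(\SnSn);\QQ) \). Your caveat is also accurate: the passage from \( \theta_{\gamma} \) to \( \Theta_{\gamma} \) goes through \cref{injection of characteristic classes; monodromy in G} and hence through the equivariant splitting of \cref{Torelli equivariant splitting}, so this second route is not independent of that result; what it changes is replacing the vcd-based collapse over \( \Gamma \) by detection on cyclic subgroups via Goldman--Millson.
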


\section{Bundles detecting Eisenstein characteristic classes}\label{sec:E2k}

In this section, we make Part~\ref{intro:item:ev for parabolic} in \cref{pairing decomposable fibrations} and its relation to \cite[Example 5.3]{Morita1987t2bundles} more explicit.
In \cref{pairing decomposable fibrations} we use that the bundles \( \Theta_{\gamma} \) induce the restriction
\[
  \operatorname{res}_{\langle{\gamma}\rangle}\cl \H^1(\Gamma;\QQ[\ex,\ey])
  \lra
  \H^1(\langle{\gamma}\rangle;\QQ[\ex,\ey]),
\]
meaning that the map induced on rational cohomology by the map that classifies the underlying oriented fibration factors as \( \operatorname{res}_{\langle{\gamma}\rangle} \) followed by a monomorphism.
The second statement of \cref{pairing decomposable fibrations} is then obtained by using \cref{parabolic ses}, which says that the direct sum of restriction maps
\begin{equation*}
  \label{eq:restriction to parabolics}
  \H^1(\Gamma;\QQ[\ex,\ey]_{2m}) \lra[\oplus\operatorname{res}_{\Gamma_{x}}] \bigoplus_{\Gamma\cdot x\in\Gamma\backslash \mathrm{P}^1(\QQ)}\H^1(\Gamma_{x};\QQ[\ex,\ey]_{2m})
\end{equation*}
is surjective.
If \( n \) is 1, 3 or 7, then \( \Gamma \) is \( \SL_2(\ZZ) \), and there is only one summand.
Otherwise, there are two: those corresponding to the subgroups \( P \) and \( Q \) generated by
\[
\begin{psmallmatrix}
  1&2\\0&1
\end{psmallmatrix}
\quad\text{and}\quad
\begin{psmallmatrix}
  2&-1\\1&0
\end{psmallmatrix}
,
\]
respectively.
Note that in this case, \( \Gamma \) is also known as the theta subgroup.

Morita constructs a smooth oriented \( \S^{1}\times\S^1 \)-bundle over
\[
  M_Q^{2}(1,1)
  \lra
  \B Q\times \CP^2
\]
which induces \( \mathrm{res}_{Q} \), and defines an explicit cocycle \( \E_4 \) such that \( \mathrm{res}_{Q}(\E_4) \) is non-zero.

In this section, we construct a smooth oriented \( \SnSn \)-bundle
\[
  M_Q(1,1)
  \lra
  \B Q\times \B\SO(n+1)
\]
which induces \( \operatorname{res}_{Q} \), and we show that the smooth oriented \( \SnSn \)-bundle
\begin{equation*}\label{eq:restriction of Dehn twists}
  M_{P}
  \lra
  \B\Dehn
\end{equation*}
classified by \( \dehn \) induces \( \operatorname{res}_P \).
In particular, these bundles admit maps to \( \Theta_Q \) and \( \Theta_P \), respectively.

For each natural number \( k \), we construct a rational characteristic class of oriented \( \SnSn \)-fibrations
\[
  \E_{2k}\in\H^1(\SL_2(\ZZ);\QQ[\ex,\ey])
\]
of degree \( 2kn+2k+1 \).
When \( n \) is 1, this recovers the class in Morita's example.
This section ends with the definition of \( \E_{2k} \), and the proof of the following lemma.
\begin{lemma}\label{E2k nonzero in BDehn}
  The class \( \E_{2k} \) is non-zero when restricted along a parabolic subgroup.
\end{lemma}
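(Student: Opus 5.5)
We will define \( \E_{2k} \) as an explicit Eichler-type cocycle, namely an Eisenstein cocycle, and check non-vanishing by direct computation on a generator of each parabolic subgroup.
Recall that \( \SL_2(\ZZ) \) is generated by \( S=\begin{psmallmatrix}0&-1\\1&0\end{psmallmatrix} \) and \( T=\begin{psmallmatrix}1&1\\0&1\end{psmallmatrix} \), with relations \( S^4=1 \), \( (ST)^6=1 \) and \( S^2 \) central.
A class in \( \H^1(\SL_2(\ZZ);W) \), with \( W=\QQ[\ex,\ey]_{2k} \) placed in the degree corresponding to \( 2kn+2k+1 \), is therefore given by a derivation \( d \) determined by \( d(S) \) and \( d(T) \), subject to the cocycle identities coming from these relations.
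Following Morita's formula for \( n=1 \), we take \( d(S)=0 \) and \( d(T)=\varphi_k \), where \( \varphi_k \) is the Eisenstein period polynomial, built out of Bernoulli numbers.
Concretely, we choose \( \varphi_k \) so that \( d(ST)\) satisfies the norm condition \( \sum_{i=0}^{5}(ST)^i\cdot d(ST)=0 \).
Since \( S \) acts on \( W \) with no invariants in odd total weight, and since \( -1 \) acts trivially on even-degree polynomials, the conditions coming from \( S^4=1 \) hold automatically.
For the relation \( (ST)^6=1 \), we solve a linear condition on \( \varphi_k \); solvability is guaranteed by \cref{parabolic ses}, which gives a one-dimensional Eisenstein quotient.
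This yields the class \( \E_{2k} \).
For \( \Gamma \) the theta subgroup, we restrict this class along the inclusion \( \Gamma\le\SL_2(\ZZ) \).

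For non-vanishing on a parabolic \( \langle u\rangle \), we use \( \H^1(\langle u\rangle;W)\cong W/(u-1)W \), under which the restricted class maps to the image of \( d(u) \).
For \( P=\langle T\rangle \), or \( \langle T^2\rangle \) in the theta case, the action of \( T \) is \( f(\ex,\ey)\mapsto f(\ex-\ey,\ey) \).
Hence \( (T-1)W \) is spanned by the polynomials \( f(\ex-\ey,\ey)-f(\ex,\ey) \), and every such polynomial vanishes at \( \ey=0 \).
The coinvariants are thus detected by the linear functional \( f\mapsto f(1,0) \), that is, by the coefficient of \( \ex^{2k} \).
It therefore suffices to show that \( d(T^j)=\sum_{i<j}T^i\varphi_k \) has non-zero \( \ex^{2k} \)-coefficient.
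This coefficient equals \( j \) times the \( \ex^{2k} \)-coefficient of \( \varphi_k \), because \( T \) fixes \( \ex^{2k} \) modulo \( \ey \).
This reduces to \( \varphi_k \) having non-zero leading term, which is a non-zero rational multiple of a Bernoulli number or of \( 1 \), as in Morita's case.

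For the second cusp in the theta case, \( Q=\langle R\rangle \) with \( R=\begin{psmallmatrix}2&-1\\1&0\end{psmallmatrix} \).
We have \( R=gTg^{-1} \) with \( g=\begin{psmallmatrix}1&0\\1&1\end{psmallmatrix}\in\SL_2(\ZZ) \), because \( R \) has trace 2 and fixes the cusp \( [1:1] \).
Conjugation-invariance of group cohomology up to inner automorphisms of \( \SL_2(\ZZ) \) then identifies \( \operatorname{res}_Q\E_{2k} \) with \( \operatorname{res}_{\langle T\rangle}\E_{2k} \), twisted by the isomorphism \( g\cdot- \) on coefficients.
Since \( \E_{2k} \) is defined on all of \( \SL_2(\ZZ) \), this reduces the \( Q \) case to the computation above.
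Every parabolic subgroup of \( \Gamma \) is a finite-index subgroup of a conjugate of \( \langle T\rangle\cap\Gamma \) or of \( Q \), and restriction to a finite-index subgroup is injective rationally; this handles the general parabolic case.

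The main obstacle is to pin down \( \varphi_k \) explicitly and verify both the cocycle relation for \( (ST)^6 \) and the non-vanishing of its \( \ex^{2k} \)-coefficient, since these two requirements pull against each other.
We would first write \( \varphi_k \) as the Eisenstein period polynomial, \( \sum_{a+b=2k} \frac{B_{a+1}B_{b+1}}{(a+1)!(b+1)!}\ex^a\ey^b \) up to boundary terms, exactly as in Morita's example, and check the relation by the classical Bernoulli identities.
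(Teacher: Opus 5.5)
Your reduction steps are sound and agree with the paper's. Restriction to a finite-index subgroup is rationally injective, and conjugacy in \( \SL_2(\ZZ) \) lets you work at the single cusp \( [1:0] \). Also, \( W/(T-1)W \) is detected by \( f\mapsto f(1,0) \), which is exactly the projection \( \mathrm{pr}_x \) the paper uses.

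The gap is that the decisive non-vanishing is never established.

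\begin{itemize}
\item The statement concerns the specific class of \cref{definition of E2k}: the derivation with \( \E_{2k}(\alpha)=\ex^{2k}-\ey^{2k} \) and \( \E_{2k}(\beta)=0 \), where \( \alpha=\begin{psmallmatrix}0&1\\-1&0\end{psmallmatrix} \) and \( \beta=\begin{psmallmatrix}0&1\\-1&1\end{psmallmatrix} \). You replace it by a cocycle of your own, with \( d(S)=0 \) and an unspecified \( \varphi_k \), and never relate the two.
\item Even for your class, \( \varphi_k(1,0)\neq 0 \) is left as the ``main obstacle''. The formula you suggest cannot supply it: its \( \ex^{2k} \)-coefficient is \( B_{2k+1}B_1/(2k+1)!=0 \) for \( k\ge 1 \), so everything would rest on the unspecified ``boundary terms''.
\end{itemize}

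You could instead choose \( \varphi_k \) via \cref{parabolic ses} so that the class is non-zero in the cusp quotient. That would make non-vanishing tautological, but again for a class that is not the paper's.

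With the paper's definition the missing step is one line. We have \( T=\begin{psmallmatrix}1&1\\0&1\end{psmallmatrix}=\beta^{-1}\alpha \) and \( \E_{2k}(\beta^{-1})=-\beta^{-1}\cdot\E_{2k}(\beta)=0 \). Moreover, \( \beta^{-1} \) acts by \( f(\ex,\ey)\mapsto f(\ey,\ey-\ex) \). The derivation property then gives
\[
  \E_{2k}(T)=\beta^{-1}\cdot(\ex^{2k}-\ey^{2k})=\ey^{2k}-(\ey-\ex)^{2k},
\]
whose value at \( (\ex,\ey)=(1,0) \) is \( -1 \). Your functional argument then finishes.

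In your own normalisation this reads as follows. Subtracting the principal derivation \( g\mapsto \ex^{2k}-g\cdot\ex^{2k} \) from \( \E_{2k} \) yields a cohomologous cocycle with \( d(S)=0 \) and \( d(T)=\ey^{2k}-\ex^{2k} \). One checks directly that \( \varphi=\ey^{2k}-\ex^{2k} \) satisfies \( S\cdot\varphi=-\varphi \) and \( (1+U+U^2)\varphi=0 \) for \( U=ST \). So \( \varphi_k=\ey^{2k}-\ex^{2k} \) works, with \( \ex^{2k} \)-coefficient \( -1 \), and no Bernoulli numbers are needed.

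There are two minor slips:
\begin{itemize}
\item \( R=gT^{-1}g^{-1} \) rather than \( gTg^{-1} \); this is harmless since they generate the same subgroup.
\item Your relations omit \( S^2=(ST)^3 \), which is needed to present \( \SL_2(\ZZ) \). In your normalisation it is equivalent to your norm condition, because \( (ST)^3=-1 \) acts trivially in even degree.
\end{itemize}
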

\begin{remark}
  In view of \cref{parabolic ses}, \cref{E2k nonzero in BDehn} implies that, after extension to complex coefficients, and modulo parabolic cohomology classes and scaling, \( \E_{2k} \) agrees with the class corresponding to the Eisenstein series of weight \( 2k+2 \) under the Eichler--Shimura isomorphism.
  An explicit cocycle for the latter is given in \cite[\sec 1.4]{haberlandPeriodenModulformenVariabler1983}.
\end{remark}

The results of this section thus show the following.

\begin{proposition}
  For each natural number \( k \), the classes \( \mathscr{E}_{2k}(M_Q(1,1)) \) and \( \E_{2k}(M_{P}) \) are non-zero.
\end{proposition}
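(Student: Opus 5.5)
The statement reduces to two ingredients: the class \( \E_{2k} \) survives restriction to \( P \) and to \( Q \) (that is \cref{E2k nonzero in BDehn}, applied to both parabolic subgroups), and the two bundles induce on rational cohomology the restriction maps \( \operatorname{res}_P \) and \( \operatorname{res}_Q \) followed by a monomorphism. Granting both, the argument is formal. The composite \( \H^*(\B\aut^+(\SnSn);\QQ)\to\H^*(\text{base};\QQ) \) factors through \( \H^1(\langle\gamma\rangle;\QQ[\ex,\ey]) \), and \( \E_{2k} \) maps to a non-zero element there.

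For \( M_P \), the bundle is classified by \( \dehn \). I would compose with \( \B\Diff^P(\SnSn)\to\B\aut^P(\SnSn)\to\B\aut^+(\SnSn) \). The last map induces \( \operatorname{res}_P \) on \( \H^1(-;\QQ[\ex,\ey]) \), via the collapsing Serre spectral sequences over \( \B P\to\B\Gamma \), exactly as in the proof of \cref{pairing decomposable fibrations}. It therefore suffices to show that \( \B(\DehnH)\to\B\aut^P(\SnSn) \) is injective on the part of rational cohomology coming from \( \H^1(P;\H^*(\B\htor;\QQ)) \).

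By \cref{model for Dehn spaces}, the map \( \B(\DehnH)\to\B(\hDehnH) \) has a rational section, so it suffices to treat \( \B(\hDehnH)\to\B\aut^P(\SnSn) \). Both spaces fibre over \( \B P \), and both Serre spectral sequences collapse because \( P \) has cohomological dimension \( 1 \). On fibres, \cref{homotopical Dehn twist model} together with \cref{monodromy P action on Dehn space} shows that the map \( \H^*(\B\htor;\QQ)=\QQ[\ex,\ey]\to\H^*(\B\Tor\dehn;\QQ) \) is a \( P \)-equivariant injection. It is moreover split: \( \widetilde{\FibProd} \) provides a \( P \)-equivariant retraction, by the proof of \cref{E equivariance of retraction}. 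Since a split injection of coefficient modules induces an injection on \( \H^1(P;-) \), we obtain \( \E_{2k}(M_P)\neq0 \).

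For \( M_Q(1,1) \), I would construct the bundle following Morita. Take the product of two copies of the universal \( \S^n \)-bundle over \( \B\SO(n+1) \), an \( \SnSn \)-bundle over \( \B\SO(n+1) \) (in the notation of that section, the diagonal of \( \FibProd \)). Then let the generator of \( Q \) act by a diffeomorphism realising \( \begin{psmallmatrix}2&-1\\1&0\end{psmallmatrix} \) that commutes with this structure, and form the mapping torus over \( \B Q\times\B\SO(n+1) \). Building this commuting action is the step I expect to be the main obstacle. The diffeomorphism must be compatible with the diagonal \( \SO(n+1) \)-action, so I would try a composite of Dehn twists of the form \( (x,y)\mapsto(y,\cdot) \) built from \( \mu \), the \( \SO(n+1) \)-equivariant multiplication-type maps available when \( n\in\{1,3,7\} \). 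For general \( n \), I would instead use maps \( \S^n\to\SO(n+1) \) of Hopf degree \( 2 \), which is why the level-two matrices appear.

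Once the bundle is built, the Euler-class description (\cref{homotopy Torelli computation}) identifies its fibre restriction as \( \QQ[\ex,\ey]\to\QQ[e] \) with \( \ex,\ey\mapsto e \) up to sign. This map is \( Q \)-equivariant onto the coinvariant-type quotient on which \( Q \) acts trivially. Then \( \operatorname{res}_Q(\E_{2k})\in\H^1(Q;\QQ[\ex,\ey]_{2k}) \) maps to its image in \( \H^1(Q;\QQ e^{2k})\cong\QQ \). The class \( \E_{2k} \) is non-zero here by the explicit computation in \cref{E2k nonzero in BDehn}: evaluate the cocycle on the generator of \( Q \) and substitute \( \ex=\ey=e \). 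This computation I would carry out last.
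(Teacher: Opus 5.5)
The overall decomposition is the paper's, and the $M_P$ half is fine. The real gap is the construction of $M_Q(1,1)$. You flag it as the main obstacle but leave it open, and the ingredients you suggest fail for $n\ge 3$.

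\textbf{Why your candidate constructions fail.} Quaternion and octonion multiplication are not equivariant for the diagonal $\SO(n+1)$-action. Morita's construction relies on $\SO(2)=\S^1$ being abelian, which is special to $n=1$. Twists into $\SO(n+1)$ fail as well. A twist $(x,y)\mapsto(x,\alpha(x)y)$ with $\alpha\colon\S^n\to\SO(n+1)$ commutes with every $\phi\times\phi$ only if $\alpha(\phi x)=\phi\,\alpha(x)\phi^{-1}$. In particular each $\alpha(x)$ must centralise the stabiliser $\SO(n)$ of $x$. For odd $n\ge3$ that centraliser in $\SO(n+1)$ is $\{\pm1\}$, so $\alpha\equiv\pm1$. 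Hence no Hopf-degree-$2$ twist into $\SO(n+1)$ is equivariant by itself.

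\textbf{The missing idea.} Twist by the $\mathrm{O}(n+1)$-valued map $x\mapsto r_x$, the reflection in $x^{\perp}$, and then swap the factors. The paper builds $M_Q(1,1)$ from $\Phi(x,y)=(\theta(x,y),x)$ with $\theta(x,y)=y-2\langle x,y\rangle x$.
\begin{itemize}
\item Since $\theta(\phi x,\phi y)=\phi\,\theta(x,y)$ for every orthogonal $\phi$, $\Phi$ commutes with the diagonal action.
\item Since $\theta(x,-)=r_x$ has degree $-1$ and $\theta(-,y)$ has degree $2$, $\Phi$ realises $\begin{psmallmatrix}2&-1\\1&0\end{psmallmatrix}$.
\item Your instinct was close: $\Phi$ is the swap composed with the twist by $x\mapsto r_xr_0$ (which has Hopf degree $2$) and with a fixed reflection $r_0$ on the second factor. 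Only the composite is equivariant.
\end{itemize}

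\textbf{The rest of your $M_Q(1,1)$ argument.} The proposition is about this bundle, so you may take it as given. Your remaining argument is then \cref{MQ induces resQ} plus a direct evaluation, and that evaluation works. Put $t=\begin{psmallmatrix}1&1\\0&1\end{psmallmatrix}$ and $g=\begin{psmallmatrix}1&0\\1&1\end{psmallmatrix}$; then $q=g\,t^{-1}g^{-1}$. The derivation property gives $\E_{2k}(q)\equiv g\cdot\E_{2k}(t^{-1})=\ex^{2k}-(\ey-\ex)^{2k}$ modulo $(1-q)$. Under $\ex,\ey\mapsto e$ this maps to $e^{2k}\neq0$. The paper avoids the computation by conjugating parabolic subgroups into $\mathrm{U}$ inside $\SL_2(\ZZ)$.

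\textbf{The $M_P$ half.} Your argument is correct but takes a slightly different route from the paper.
\begin{itemize}
\item You get injectivity on $\H^1(P;-)$ from the $P$-equivariant retraction supplied by $\widetilde{\FibProd}$, as in the proof of \cref{E equivariance of retraction}.
\item The paper (\cref{MP induces resP}) uses the rational section to pass to $\B\hDehn$. Its homotopy fibre over $\B P$ has rational cohomology $\QQ[\ex]$ with trivial action. So the fibre map is the projection of $\QQ[\ex,\ey]$ onto its $P$-coinvariants, which induces an isomorphism on $\H^1(P;-)$.
\end{itemize}
Your write-up mixes the two routes: after reducing to $\B(\hDehnH)$ you argue with the smooth fibre $\B\Tor\dehn$. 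Either reduction alone suffices. Also, the paper's $M_P$ lives over $\B\Dehn$, without the semidirect $\SO(n+1)$-factor. There the fibre map kills $\ey$, and your retraction argument still goes through using only the first factor of $\widetilde{\FibProd}$.
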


When \( n \) is \( 3 \), we show that we can restrict \( M_{P} \) to a bundle
\[
  M_{P}^{\ell}(1,0)
  \lra
  \B P\times \HP^{\ell}
\]
that detects the classes \( \E_{2k} \) for \( 2k\le \ell  \).

\paragraph{The bundle of Morita}

Using the Lie group structure on \( \S^1 \) we can define an action of a matrix in \( \SL_2(\ZZ) \) on \( \S^1\times\S^1 \) by the orientation-preserving diffeomorphism
\[
\begin{psmallmatrix}
  a&b\\c&d
\end{psmallmatrix}
\cdot(x,y)
=(x^ay^b,x^cy^d).
\]
By applying homology in degree \( 1 \) we recover the matrix.
In fact, since \( \S^1 \) is abelian, this defines an action of the entire special linear group.
Morita uses this action of \( \begin{psmallmatrix}
  2&-1\\1&0
\end{psmallmatrix}
\)
to define an automorphism of the smooth oriented \( \S^1\times\S^1 \)-bundle \( E(1,1) \) associated to the Whitney sum of the canonical line bundle over \( \CP^2 \) with itself.
Morita shows that the mapping torus
\begin{equation}
  \label{eq:Morita's bundle}
  M_Q(1,1)\lra \S^1\times\CP^2
\end{equation}
of the bundle \( E(1,1) \) detects the class \( \E_4 \).
In terms of clutching functions, it may be constructed as follows.
By letting each \( \S^1 \) factor act on itself by left multiplication, we get a map
\begin{equation}
  \label{eq:chi for S1}
  \chi \cl \S^1\times\S^1 \lra \Diff^+(\S^1\times\S^{1})
\end{equation}
which is equivariant for the conjugation action of \( \SL_2(\ZZ) \) on the diffeomorphism group.
In particular, the diagonal in \( \S^1\times\S^1 \) is fixed pointwise under the conjugation action of \( Q \) on the target.
We obtain a map
\begin{equation}
  \label{eq:classifying map of Morita's bundle}
  \B Q\times \B\S^{1}
  \lra
  \B\Diff^+(\S^1\times\S^1)
\end{equation}
which classifies \eqref{eq:Morita's bundle} when restricted along the map \( \CP^2\ra\B\S^1 \) classifying the canonical line bundle.

\paragraph{The generalised bundle $M_Q(1,1)$}
The lift of \( Q \) to self-diffeomorphisms of \( \S^1\times\S^1 \) given above is defined using the Lie group structure of \( \S^1 \), and the invariance of the diagonal under conjugation by this lift holds because \( \S^{1} \) is abelian.
This is special to the case in which \( n \) is 1.
For our generalisation of the bundle \( M_Q(1,1) \) we pick a different diffeomorphism lift of \(
\begin{psmallmatrix}
  2&-1\\1&0
\end{psmallmatrix}
\).
Namely, the self-diffeomorphism of \( \SnSn \) with value \( (\theta(x,y),x) \) at \( (x,y) \), where
\[
  \theta(x,y)=y-2\langle{x,y}\rangle x
\]
and \( \langle{-,-}\rangle \) is the inner product of \( \RR^{n+1} \).
The map \( \theta(-,y) \) is the reflection along the plane orthogonal to \( y \), and it has degree 2.
See \cite[p.14]{Steenrod1962cohomologyOperations}.
For all \( n \) the image of the map
\begin{equation*}
  \label{eq:Whitney sum}
  \SO(n+1)
  \lra[\Delta]
  \SO(n+1)^{\times 2}
  \lra[\chi]
  \Diff^+(\SnSn)
\end{equation*}
is fixed under conjugation by the given diffeomorphism.
Indeed, for a diffeomorphism \( \phi\times \phi \) in the image of \( \chi\circ\Delta \) we have
\[
  \phi(\theta(x,y))=\theta(\phi(x),\phi(y))
\]
because \( \phi \) is an orthogonal transformation.
We obtain a map
\begin{equation}
  \label{eq:classifying map for general MQ(1,1)}
  \B Q\times \B\SO(n+1)
  \lra
  \B\Diff^+(\SnSn).
\end{equation}
Denote by \( M_Q(1,1) \) the bundle classified by it.
\begin{lemma}\label{MQ induces resQ}
  The bundle \( M_Q(1,1) \) induces \( \operatorname{res}_Q \).
\end{lemma}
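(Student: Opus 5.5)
We need to show that the map on rational cohomology induced by the classifying map \eqref{eq:classifying map for general MQ(1,1)} factors as \( \operatorname{res}_Q \) followed by a monomorphism. Here \( \operatorname{res}_Q \) is viewed as a map \( \H^1(\Gamma;\QQ[\ex,\ey])\to\H^1(Q;\QQ[\ex,\ey]) \).

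\textbf{Step 1: factor through the cover.} The classifying map \eqref{eq:classifying map for general MQ(1,1)} is \( \B \) of a homomorphism \( Q\times\SO(n+1)\to\Diff^{Q}(\SnSn) \), given by the diffeomorphism lift \( (x,y)\mapsto(\theta(x,y),x) \) together with \( \chi\circ\Delta \). So it factors through \( \B\Diff^{Q}(\SnSn)\to\B\aut^{Q}(\SnSn)\to\B\aut^+(\SnSn) \). By the argument in the proof of \cref{pairing decomposable fibrations}, the last map induces \( \operatorname{res}^\Gamma_Q \) on rational cohomology via \cref{SnSn characteristic classes}. It therefore suffices to show that
\[
  \B Q\times\B\SO(n+1)\to\B\aut^{Q}(\SnSn)
\]
is injective on rational cohomology. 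By \cref{SnSn characteristic classes} and \cref{vcd of Gamma}, the latter space has cohomology \( \H^{*}(Q;\QQ[\ex,\ey]) \).

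\textbf{Step 2: compare Serre spectral sequences over \( \B Q \).} Both spaces fibre over \( \B Q \): the source with fibre \( \B\SO(n+1) \) and trivial monodromy, the target with fibre \( \B\htor(\SnSn) \). The map of fibres \( \B\SO(n+1)\to\B\htor(\SnSn) \) factors as \( \Delta \) followed by \( \FibProd \). On cohomology it sends \( e_x,e_y \) to \( e,e \), by the remark after \cref{def:Euler classes} on products of sphere fibrations. Hence on \( E_2 \)-pages the map is induced by the \( Q \)-module map
\[
  \QQ[\ex,\ey]\to\H^{*}(\B\SO(n+1);\QQ),\qquad f(\ex,\ey)\mapsto f(e,e).
\]
This map is \( Q \)-equivariant for the trivial action on the target. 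This is consistent: the generator \( \begin{psmallmatrix}2&-1\\1&0\end{psmallmatrix} \) acts on \( W \) by \( f(\ex,\ey)\mapsto f(\ey,-\ex+2\ey) \), which fixes \( \ex=\ey \).

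Since both spectral sequences are concentrated in columns \( 0,1 \), they collapse. In column 1 the map is
\[
  \H^1(Q;\QQ[\ex,\ey]_{2m})\to\H^1(Q;\QQ e^{2m})=\QQ .
\]
For cyclic \( Q=\langle q\rangle \), \( \H^1(Q;M)=M/(q-1)M \), so this is the coinvariant map induced by evaluation at \( \ex=\ey \). The space \( \H^1(Q;W_{2m}) \) is one-dimensional, since \( q \) is unipotent with a single Jordan block on \( \operatorname{Sym}^{2m} \). It therefore suffices to show that evaluation at \( \ex=\ey \) does not vanish on \( W_{2m}/(q-1)W_{2m} \), i.e.\ that the image of \( q-1 \) lies in the kernel but the whole of \( W_{2m} \) does not. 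The latter is immediate because \( \ex^{2m}\mapsto e^{2m}\neq0 \) (rationally \( e^{2}\ne0 \) in \( \B\SO(n+1) \), as \( n+1 \) is even). Injectivity of the total map then follows from injectivity on \( E_\infty \) together with a filtration argument.

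\textbf{Main obstacle.} The hard part is justifying that the monodromy identification in Step 2 is correct. The relevant action is conjugation by the chosen diffeomorphism lift on \( \Tor \), so one needs \cref{conjugation=monodromy} applied to the split extension \( \SO(n+1)\rtimes Q \) (in fact a product, since the image is fixed pointwise). One also needs the map of Serre spectral sequences to be identified with the map of \( Q \)-modules described above. For this I would use \cref{homotopy Torelli computation} and equivariance of the Euler-class spectral sequence, as in the proof of \cref{injection of fibration classes}. A further check is that the diffeomorphism lift induces exactly \( \begin{psmallmatrix}2&-1\\1&0\end{psmallmatrix} \) on \( \H_n \); this uses that \( \theta(-,y) \) has degree \( 2 \) and \( \theta(x,-) \) has degree \( -1 \).
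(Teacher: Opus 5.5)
Your core computation is the same as the paper's. You factor the classifying map through \( \operatorname{res}_Q \) followed by \( \H^1(Q;\Delta^*) \), and the latter is an isomorphism because \( f\mapsto f(e,e) \) identifies \( \QQ[e] \) with the coinvariants \( \QQ[\ex,\ey]_Q \). Your use of \( \H^1(Q;M)=M/(q-1)M \) together with one-dimensionality is exactly this.

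\textbf{The gap.} The reduction you state in Step 1 is false, and so is the conclusion ``injectivity of the total map'' at the end of Step 2. The map \( \B Q\times\B\SO(n+1)\to\B\aut^{Q}(\SnSn) \) is \emph{not} injective on rational cohomology. You only analyse column 1. Column 0 of the spectral sequence for \( \B\aut^Q(\SnSn)\to\B Q \) is \( \QQ[\ex,\ey]^Q \), which in polynomial degree \( 2m \) is spanned by \( (\ex-\ey)^{2m} \): indeed \( \ex-\ey \) is fixed by the generator \( f(\ex,\ey)\mapsto f(\ey,-\ex+2\ey) \). Evaluation at \( \ex=\ey \) kills this invariant. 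In cohomological degree \( 2m(n+1) \), neither space has a column-1 contribution:
for \( \H^*(\B\aut^Q(\SnSn);\QQ) \) one would need \( (n+1)\mid 1 \),
and for \( \H^*(\B Q\times\B\SO(n+1);\QQ) \) one would need odd-degree cohomology of \( \B\SO(n+1) \).
So the corresponding class of \( \B\aut^Q(\SnSn) \) genuinely maps to zero, and no filtration argument can give injectivity of the whole map.

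\textbf{The repair.} What you need is weaker, and it is what ``induces \( \operatorname{res}_Q \)'' means: injectivity on the image of \( \operatorname{res}_Q \).
Since \( \QQ[\ex,\ey] \) has no \( \Gamma \)-invariants in positive degrees, the positive-degree cohomology of \( \B\aut^+(\SnSn) \) lies entirely in column 1.
Hence its image lies in \( F^1=E_\infty^{1,*}=\H^1(Q;\QQ[\ex,\ey])\subseteq\H^*(\B\aut^Q(\SnSn);\QQ) \).
As \( F^2=0 \) on both sides, the map restricted to \( F^1 \) is the \( E_\infty^{1,*} \)-map \( \H^1(Q;\Delta^*) \) into \( F^1 \) of \( \H^*(\B Q\times\B\SO(n+1);\QQ) \), and your column-1 computation shows it is injective.
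Restricting Step 1 to this summand fixes the argument, which then coincides with the paper's proof.

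The ``main obstacle'' you raise is not one. The monodromy of \( \B Q\times\B\SO(n+1)\to\B Q \) is trivial by construction, and the map of Serre spectral sequences over \( \B Q \) is automatically induced by the map of fibres.
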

\begin{proof}
  On rational cohomology, the map classifying the underlying fibration of \( M_Q(1,1) \) factors through
  \[
    \begin{tikzcd}[column sep=large]
      \H^1(\Gamma;\QQ[\ex,\ey])
      \ar[r,"\operatorname{res}_{Q}"]
      &\H^1(Q;\QQ[\ex,\ey])
      \ar[r,"\H^{1}(Q;\Delta^{*})"]
      &\H^1(Q;\QQ[e]).
    \end{tikzcd}
  \]
  The map \( \H^1(Q;\Delta^{*}) \) is an isomorphism because \( \Delta^{*} \)  identifies \( \QQ[e] \) with the coinvariants \( \QQ[\ex,\ey]_{Q} \).
\end{proof}

\paragraph{The bundle $M_P$}
The map \( \dehn \) given by generalised Dehn twists induces the map
\[
  \B\Dehn
  \lra
  \B\Diff^+(\SnSn)
\]
which classifies the bundle that we denote by \( M_P \).
See \cref{bundles and dehn twists} for a bundle-theoretic description of \( M_P \).

\begin{lemma}\label{MP induces resP}
  The bundle \( M_P \) induces \( \operatorname{res}_P \).
\end{lemma}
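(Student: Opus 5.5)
Following the pattern of \cref{MQ induces resQ}, the aim is to show that the map induced on rational cohomology by the classifying map of the underlying oriented fibration of \( M_P \) factors as \( \operatorname{res}_P \) followed by a monomorphism. The classifying map factors as
\[
  \B(\DehnH)\lra \B(\hDehnH)\xrightarrow{\B\hdehn}\B\aut^{P}(\SnSn)\xrightarrow{\theta_P}\B\aut^+(\SnSn),
\]
and all maps lie over \( \B P\ra\B\Gamma \) by \cref{Dehn has monodromy P}.

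\textbf{Step 1.} Compare Serre spectral sequences over \( \B P \). By \cref{vcd of Gamma}, all of them collapse, with \( P \) infinite cyclic. By \cref{SnSn characteristic classes} and the argument in the proof of \cref{pairing decomposable fibrations}, \( \theta_P \) induces \( \operatorname{res}_P\cl\H^1(\Gamma;\QQ[\ex,\ey])\ra\H^1(P;\QQ[\ex,\ey]) \). It therefore remains to show that the composite \( \B(\DehnH)\ra\B\aut^P(\SnSn) \) induces an injection on \( E_2 \)-pages, that is, on \( \H^1(P;-) \) and \( \H^0(P;-) \) with coefficients in the Torelli fibre cohomology.

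\textbf{Step 2.} On fibres over \( \B P \), the map is \( \B\Tor\dehn\ra\B\htor(\SnSn) \). Using the model of \cref{monodromy P action on Dehn space}, the kernel \( \mathfrak k \) is abelian in the relevant part. Its Chevalley--Eilenberg cohomology contains the classes \( e_x,e_y \) dual to \( \epsilon_x,\epsilon_y \), together with further classes such as the Pontryagin duals and the \( y\otimes\epsilon_x \)-type classes. The \( P \)-action is the standard one on \( \QQ\langle e_x,e_y\rangle \) and trivial on the complement. The map from \( \QQ[\ex,\ey] \) given by \cref{homotopy Torelli computation} sends Euler classes to these generators. Exactly as in \cref{E equivariance of retraction}, there is a \( P \)-equivariant retraction, obtained via \( \widetilde{\FibProd} \), onto the subalgebra \( \QQ[\ex,\ey] \). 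Hence \( \QQ[\ex,\ey]\ra\H^*(\B\Tor\dehn;\QQ) \) is split injective as a map of \( P \)-modules.

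\textbf{Step 3.} Group cohomology is additive, so a \( P \)-equivariant split injection of coefficients induces an injection on \( \H^*(P;-) \). With collapse, this shows that the induced map factors as \( \operatorname{res}_P \) followed by a monomorphism.

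The main obstacle is Step 2. One must verify carefully that the identification of the fibre's \( P \)-module with \( \QQ[\ex,\ey] \), through \cref{homotopical Dehn twist model} and \cref{model for BautP}, matches the dual standard action. This includes checking the sign convention \( -y\frac{\partial}{\partial x}\mapsto\begin{psmallmatrix}0&1\\0&0\end{psmallmatrix} \). It also requires using the rational section of \cref{model for Dehn spaces} to pass between the homotopical and smooth Dehn models.
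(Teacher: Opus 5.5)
There is a gap: your argument is about a different bundle from the one in the statement. As written, it treats the bundle classified by the map \( \B(\DehnH)\ra\B\Diff^+(\SnSn) \) induced by \( \dehn \), and for that bundle it works. Steps 1--3 re-run the \( \widetilde{\FibProd} \)-retraction from the proof of \cref{E equivariance of retraction}, which does exhibit \( \QQ[\ex,\ey] \) as a \( P \)-equivariant summand of \( \H^*(\B\Tor\dehn;\QQ) \). (Incidentally, \( \mathfrak k \) is not abelian in general: \( \epsilon_y \) brackets \( y\otimes q_{j,x} \), when present, to \( \pm1\otimes q_{j,x} \). Also \( y\otimes\epsilon_x \) is not in \( \mathfrak k \). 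The retraction makes both points irrelevant.)

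However, \( M_P \) lives over \( \B\Dehn \), the classifying space of \( \mathrm{C}^{\infty}(\S^n,\SO(n+1)) \) without the semidirect factor \( \SO(n+1) \). Restricting along \( \B\Dehn\ra\B(\DehnH) \) is not harmless. The homotopy fibre of \( \B\Dehn\ra\B P \) is modelled by the kernel of \( (\Lambda(y)\otimes\sx)\langle{0}\rangle\ra\ppp \), whose image in \( \Der\Lambda(x,y) \) lies in \( \Lambda(y)\otimes\Der\Lambda(x) \). Nothing hits \( \frac{\partial}{\partial y} \), so \( \ey \) restricts to zero on this fibre. Thus \( \QQ[\ex,\ey] \) does not even inject into the fibre cohomology of \( M_P \), and the split-injection-plus-additivity mechanism of your Steps 2 and 3 cannot be applied to the bundle in the statement.

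The missing idea, which is what the paper's proof rests on, is that a non-injective coefficient map can still be an isomorphism on \( \H^1(P;-) \). Since \( P \) is infinite cyclic, \( \H^1(P;V)\cong V_P \). The projection \( \QQ[\ex,\ey]\ra\QQ[\ex] \), \( \ey\mapsto 0 \), onto a module with trivial \( P \)-action is exactly the quotient to the \( P \)-coinvariants, as in \cref{MQ induces resQ}.

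The paper applies this on the homotopical side. By \cref{monodromy P action on Dehn space} with \( \aaa \) in place of \( \sss \), the fibre of \( \B\hDehn\ra\B P \) has rational cohomology exactly \( \QQ[\ex] \) with trivial \( P \)-action, and the map of fibres to \( \B\htor(\SnSn) \) induces this projection. The rational section of \( \B\Dehn\ra\B\hDehn \) from \cref{model for Dehn spaces} then makes \( \H^*(\B\hDehn;\QQ)\ra\H^*(\B\Dehn;\QQ) \) injective, which passes the conclusion to the smooth side.

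You can also repair your own route directly. The dg Lie algebra \( (\Lambda(y)\otimes\sx)\langle{0}\rangle \) is abelian, so \( P \) acts trivially on the fibre of \( \B\Dehn\ra\B P \). That fibre's rational cohomology is free graded-commutative with \( \ex \) among its generators, so \( \QQ[\ex] \) splits off \( P \)-equivariantly. Composing this with the coinvariant isomorphism \( \H^1(P;\QQ[\ex,\ey])\cong\H^1(P;\QQ[\ex]) \) gives \( \operatorname{res}_P \) followed by a monomorphism.
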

\begin{proof}
  The map
  \[
    \B\Dehn
    \lra
    \B\hDehn
  \]
  admits a rational section by \cref{model for Dehn spaces}.
  By \cref{Dehn has monodromy P} we have a diagram
  \begin{equation}\label{eq:fibration over space of smooth Dehn twists}
    \begin{tikzcd}
      \B\hDehn
      \ar[d]
      \ar[r]
      &\B\aut^+(\SnSn)
      \ar[d]
      \\
      \B P
      \ar[r]
      &\B\Gamma.
    \end{tikzcd}
  \end{equation}
  We extract a rational model for the vertical map on the left from \cref{monodromy P action on Dehn space}, and we deduce the following.
  The rational cohomology of its homotopy fibre is the polynomial algebra in the Euler class \( \ex \) with the trivial action of \( P \).
  The map between the vertical homotopy fibres of \eqref{eq:fibration over space of smooth Dehn twists} induces the projection \( \QQ[\ex,\ey]\ra \QQ[\ex] \), which identifies \( \QQ[\ex] \) as the \( P \)-coinvariants.
  As in the proof of \cref{MQ induces resQ}, this concludes the proof.
\end{proof}

\paragraph{Quaternions}

We show that \( M_P \) restricts to a bundle \( M_P(1,0) \) over \( \B P\times \B\S^3 \) that still detects the classes \( \mathscr{E}_{2k} \).
When restricted to \( \B P\times\HP^{\ell} \), it detects the classes \( \mathscr{E}_{2k} \) for \( 2k\le \ell  \).

Observe that \( \mathrm{C}^{\infty}(\S^3,\SO(4)) \) fibres over \( P \), by \cref{Dehn has monodromy P}.
We consider the section that sends the generator \( \mathrm{t}=
\begin{psmallmatrix}
  1&1\\0&1
\end{psmallmatrix}
\) to the map \( \S^3\ra\SO(4) \) given by right multiplication for the Lie group structure on \( \S^3 \).
We let \( \mathrm{t} \) act fibrewise by conjugation.

The action of \( \S^3 \) on itself by left multiplication gives a map of topological groups \( \ell\cl\S^3\ra\SO(4) \).
Consider the map
  \begin{equation}
  \S^3\lra\mathrm{C}^{\infty}(\S^3,\SO(4))\label{eq:S3 bundle}
\end{equation}
which sends \( \omega \) to the constant map at \( \ell(\omega) \).
Its image is fixed pointwise under the conjugation action of \( \mathrm{t} \), because the multiplication of \( \S^3 \) is associative.
We obtain a map to it from the product \( P\times \S^3 \), and an induced map
\[
  \B P \times \B\S^3
  \lra
  \B\mathrm{C}^{\infty}(\S^3,\SO(4))
  \lra[\dehn]
  \B\Diff^+(\S^3\times\S^3).
\]
which classifies the bundle that we denote by \( M_P(1,0) \).

Similarly to \cref{MQ induces resQ} and \cref{MP induces resP} one proves that \( M_P(1,0) \) induces \( \operatorname{res}_P \).
Therefore, the classes \( \mathscr{E}_{2k}(M_P(1,0)) \) are non-zero.
For this, one uses that the rational cohomology of \( \B\S^3 \) is a polynomial algebra in the Euler class of the universal principal \( \S^{3} \)-bundle.

\paragraph{The classes \( \E_{2k} \)}

We define representatives of the classes \( \E_{2k} \) in \( \H^1(\Gamma;\QQ[\ex,\ey]) \) as derivations.
(Recall from \cref{sec:group-cohom-homol} the description of \( 1 \)-cocycles in terms of derivations.)
We view these classes as rational characteristic classes of oriented \( \SnSn \)-fibrations using the isomorphism
\[
  \widetilde{\H}^{*}(\B\aut^+(\SnSn);\QQ)\cong\H^1(\Gamma;\QQ[\ex,\ey])
\]
in \cref{SnSn characteristic classes}.
In the notation we do not distinguish between the class, its representative, and the corresponding cohomology class of \( \B\aut^+(\SnSn) \).
Consider the presentation
\[
  \SL_2(\ZZ)=\langle{\alpha,\beta\;|\;\alpha^4=\alpha^2\beta^{-3}=1}\rangle
\]
obtained by letting \( \alpha=
\begin{psmallmatrix}
  0 & 1\\
  -1 & 0
\end{psmallmatrix} \) and \( \beta=
\begin{psmallmatrix}
  0 & 1\\
  -1 & 1
\end{psmallmatrix} \).

\begin{lemma}\label{definition of E2k}
  For each natural number \( k \) there is a unique derivation
  \[
    \E_{2k}\cl\SL_2(\ZZ)\lra \QQ[\ex,\ey]
  \]
  such that \( \E_{2k}(\alpha)=\ex^{2k}-\ey^{2k} \) and \( \E_{2k}(\beta)=0 \).
\end{lemma}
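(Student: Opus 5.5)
A derivation \( d\cl G\ra V \) on a group given by a presentation \( \langle S\mid R\rangle \) is determined by its values on the generators \( S \). Conversely, an arbitrary assignment \( S\ra V \) extends to a derivation exactly when the induced "Fox derivative" value of each relator vanishes. The reason is that derivations \( G\ra V \) correspond to group homomorphisms \( G\ra V\rtimes G \) that are sections of the projection, via \( g\mapsto (d(g),g) \). So I will use the presentation \( \SL_2(\ZZ)=\langle\alpha,\beta\mid \alpha^4=\alpha^2\beta^{-3}=1\rangle \) recalled just above. Uniqueness is then immediate, since \( \alpha \) and \( \beta \) generate. For existence, I define \( \E_{2k} \) on the free group \( F(\alpha,\beta) \) as the unique derivation with the prescribed values, and check that it vanishes on the relators \( \alpha^4 \) and \( \alpha^2\beta^{-3} \). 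Since the kernel of \( F\ra\SL_2(\ZZ) \) is normally generated by the relators, and a derivation vanishing on \( r \) vanishes on \( grg^{-1} \) whenever \( r \) acts trivially (here \( d(grg^{-1})=g\cdot d(r) \)), this suffices.

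First compute the action of \( \alpha \) on \( \QQ[\ex,\ey] \). By the formula \( \begin{psmallmatrix}a&b\\c&d\end{psmallmatrix}f(\ex,\ey)=f(d\ex-b\ey,-c\ex+a\ey) \) with \( a=0,b=1,c=-1,d=0 \), we get \( \alpha f(\ex,\ey)=f(-\ey,\ex) \). Hence \( \alpha\cdot(\ex^{2k}-\ey^{2k})=\ey^{2k}-\ex^{2k} \), so \( v:=\E_{2k}(\alpha) \) satisfies \( \alpha v=-v \).

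For the relator \( \alpha^4 \), the derivation rule gives
\[
\E_{2k}(\alpha^4)=(1+\alpha+\alpha^2+\alpha^3)v=v-v+v-v=0.
\]
For the relator \( \alpha^2\beta^{-3} \), we have \( \E_{2k}(\beta)=0 \), hence \( \E_{2k}(\beta^{-1})=-\beta^{-1}\E_{2k}(\beta)=0 \) and \( \E_{2k}(\beta^{-3})=0 \). Therefore
\[
\E_{2k}(\alpha^2\beta^{-3})=\E_{2k}(\alpha^2)+\alpha^2\E_{2k}(\beta^{-3})=(1+\alpha)v=v-v=0.
\]
Both relator values vanish, so \( \E_{2k} \) descends to a derivation on \( \SL_2(\ZZ) \).

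The only point needing care is the justification that checking the relators suffices. I would phrase it via the semidirect-product homomorphism \( F\ra V\rtimes F\ra V\rtimes\SL_2(\ZZ) \), which sends each relator to \( (\E_{2k}(r),1)=(0,1) \) and therefore factors through \( \SL_2(\ZZ) \). Its first coordinate is then the desired derivation.
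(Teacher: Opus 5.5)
Your proposal is correct and follows the paper's proof: you extend to the free group on $\alpha,\beta$ and check that the derivation vanishes on the two relators, using $\alpha v=-v$ (the paper phrases this as $\alpha^2=-\mathrm{id}$ acting trivially in even degree, which is the same computation). Your only addition is to justify why checking the relators suffices, via normal closure or the semidirect-product section, where the paper simply cites Brown.
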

\begin{proof}
  There is a unique extension of \( f=\E_{2k} \) to a derivation on the free group on \( \alpha \) and \( \beta \). To extend it to \( \SL_2(\ZZ) \) it is enough to check that \( f(\alpha^4)=f(1) \) and \( f(\alpha^2)=f(\beta^3) \); indeed, see~\cite[90]{Brown1994groupCohomology}. Using the derivation condition we rewrite these equations as
  \begin{gather}
    \label{eq:alpha crossed}(1+\alpha+\alpha^2+\alpha^3)f(\alpha)=0\\
    \label{eq:beta crossed}(1+\alpha)f(\alpha)=(1+\beta+\beta^2)f(\beta).
  \end{gather}
  Note that \( \alpha^2=-\id \) acts trivially on homogeneous polynomials of even degree.
  Therefore, the left-hand side of \eqref{eq:alpha crossed} takes the form
  \[
    2(1+\alpha)f(\alpha)=2(\ex^{2k}-\ey^{2k} + \ey^{2k}-\ex^{2k}) =0.
  \]
  We defined \( f(\beta)=0 \), so both equations \eqref{eq:alpha crossed} and \eqref{eq:beta crossed} are satisfied.
\end{proof}

\begin{lemma}\label{E2k nonzero in BDehn (proof)}
  The class \( \E_{2k} \) is non-zero when restricted along a parabolic subgroup.
\end{lemma}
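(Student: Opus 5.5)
The plan is to compute the restriction of the derivation \( \E_{2k} \) of \cref{definition of E2k} to the cyclic subgroup \( \mathrm{U}=\langle \mathrm{t}\rangle \), with \( \mathrm{t}=\begin{psmallmatrix}1&1\\0&1\end{psmallmatrix} \), and then transfer the result to \( P \) and \( Q \). For a cyclic group \( \langle g\rangle \) a derivation \( d \) is determined by \( d(g) \), and principal derivations have values \( (1-g)v \). Hence \( \H^1(\langle g\rangle;W)\cong W/(g-1)W \), the coinvariants, with the class of \( d \) going to the class of \( d(g) \). It therefore suffices to show that \( \E_{2k}(\mathrm{t}) \) is not in \( (\mathrm{t}-1)\QQ[\ex,\ey]_{2k} \).

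First I express \( \mathrm{t} \) in the generators. A direct matrix computation gives \( \alpha^{-1}\beta=\begin{psmallmatrix}1&-1\\0&1\end{psmallmatrix} \), so \( \mathrm{t}=\beta^{-1}\alpha \). The derivation rule together with \( \E_{2k}(\beta)=0 \) gives \( \E_{2k}(\beta^{-1})=-\beta^{-1}\E_{2k}(\beta)=0 \). Hence \( \E_{2k}(\mathrm{t})=\beta^{-1}\cdot(\ex^{2k}-\ey^{2k}) \).

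Next I identify the coinvariants. By the formula for the action, \( \mathrm{t} \) acts by \( f(\ex,\ey)\mapsto f(\ex-\ey,\ey) \). Every element of \( (\mathrm{t}-1)W_{2k} \) vanishes at \( \ey=0 \), so it is divisible by \( \ey \). The kernel of \( \mathrm{t}-1 \) on \( W_{2k} \) is spanned by \( \ey^{2k} \), so by dimension count \( (\mathrm{t}-1)W_{2k}=\ey\cdot W_{2k-1} \). Consequently the functional \( f\mapsto f(1,0) \) induces an isomorphism \( \H^1(\mathrm{U};W_{2k})\cong\QQ \). The element \( \beta^{-1}=\begin{psmallmatrix}1&-1\\1&0\end{psmallmatrix} \) acts by \( f\mapsto f(\ey,-\ex+\ey) \). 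Evaluating at \( (1,0) \) gives \( f(0,-1)=-1 \) for \( f=\ex^{2k}-\ey^{2k} \), so \( \operatorname{res}_{\mathrm{U}}\E_{2k}\neq0 \).

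Finally I transfer this to the parabolic subgroups of the theta group. For \( P=\langle \mathrm{t}^2\rangle \) we have \( \E_{2k}(\mathrm{t}^2)=(1+\mathrm{t})\E_{2k}(\mathrm{t}) \). Since \( \mathrm{t} \) preserves the value at \( (1,0) \), this evaluates to \( -2\neq0 \). For \( Q \), the generator \( \begin{psmallmatrix}2&-1\\1&0\end{psmallmatrix} \) has trace \( 2 \) and fixes \( [1:1] \), so it is conjugate in \( \SL_2(\ZZ) \) to \( \mathrm{t}^{\pm1} \). Inner automorphisms act trivially on \( \H^*(\SL_2(\ZZ);W) \), so restriction to \( Q \) corresponds under the conjugation isomorphism to restriction to \( \mathrm{U} \), which is nonzero. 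The same argument covers every parabolic subgroup: each is a finite-index subgroup of an \( \SL_2(\ZZ) \)-conjugate of \( \mathrm{U} \), with generator conjugate to \( \mathrm{t}^{\pm m} \), and the computation for \( \mathrm{t}^m \) gives \( \mp m\neq 0 \) at \( (1,0) \). The only step needing care is the identification of the coinvariants with the evaluation at \( (1,0) \); the rest is bookkeeping of the derivation rule.
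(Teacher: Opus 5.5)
Your proposal is correct and follows essentially the paper's route: write \( \mathrm{t}=\beta^{-1}\alpha \), use the derivation rule and \( \E_{2k}(\beta)=0 \) to get \( \E_{2k}(\mathrm{t})=\beta^{-1}(\ex^{2k}-\ey^{2k})=\ey^{2k}-(\ey-\ex)^{2k} \), and detect this via the \( \mathrm{t} \)-equivariant map that sets \( \ey=0 \) (the paper's \( \mathrm{pr}_x \) into \( \Hom(P,\QQ[\ex]) \), your evaluation at \( (1,0) \)), which gives \( -\ex^{2k}\neq 0 \). The only differences are that you also show this map identifies the coinvariants, which is not needed for non-vanishing, and that you carry out the reduction to the other parabolic subgroups explicitly, via \( \SL_2(\ZZ) \)-conjugacy and the computation at \( \mathrm{t}^{\pm m} \), where the paper just appeals to conjugacy up to finite index and rational coefficients.
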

\begin{proof}
  It is enough to check that it is non-zero when restricted a single parabolic subgroup, because up to finite-index they are all conjugate in \( \SL_2(\ZZ) \), and the coefficients are rational.
  We calculate the pullback of \( \E_{2k} \) under the homomorphism
  \[
    \H^1(\Gamma;\QQ[\ex,\ey]) \lra \Hom(P,\QQ[\ex])
  \]
  given by \( \operatorname{res}_P \) followed by the projection \( \mathrm{pr}_{x}\cl\QQ[\ex,\ey]\ra\QQ[\ex] \), where \( \QQ[\ex] \) is given the trivial action.
  The class \( \E_{2k} \) is sent to the homomorphism \( \mathrm{pr}_{x}\circ\E_{2k}\circ \operatorname{res}_{P} \), where \( \E_{2k} \) now denotes a derivation representative.
  Observe that \(
  \begin{psmallmatrix}
    1&1\\0&1
  \end{psmallmatrix}
  =\beta^{-1}\alpha \).
  Then we compute
  \begin{align*}
    \E_{2k}
    (
    \begin{psmallmatrix}
      1&1\\0&1
    \end{psmallmatrix}
    )
    &=\beta^{-1}\cdot \E_{2k}(\alpha)\\
                        &=(\beta^{-1} \ex)^{2k}-(\beta^{-1} \ey)^{2k}\\
                        &=\ey^{2k}-(\ey-\ex)^{2k}
  \end{align*}
  using the derivation property.
  This element is sent to \( -\ex^{2k} \) by \( \mathrm{pr}_x \).
\end{proof}

\printbibliography
\contact{Department of Mathematics, Stockholm University, SE-106 91 Stockholm, Sweden}{jan.mcgarry.furriol@math.su.se}
\end{document}